\theoremstyle{plain}
\newtheorem{thm}{Theorem}[section]
\newtheorem{conjecture}[thm]{Conjecture}
\newtheorem{propo}[thm]{Proposition}
\newtheorem{lem}[thm]{Lemma}
\newtheorem{cor}[thm]{Corollary}
\theoremstyle{definition}
\newtheorem*{def-unnumbered}{Definition}
\newtheorem{def-numbered}[thm]{Definition}
\theoremstyle{remark}
\newtheorem{rem}[thm]{Remark}
\definecolor{grey30}{rgb}{0.3,0.3,0.3}
\definecolor{grey67}{rgb}{0.67,0.67,0.67}
\definecolor{newcyan}{RGB}{105,164,216}
\definecolor{neworange}{RGB}{215,125,57}
\newcommand{\tif}{\text{if }}
\newcommand{\tand}{\text{ and }}
\newcommand{\tfor}{\text{for }}
\newcommand{\ga}{\alpha}
\newcommand{\gb}{\beta}
\newcommand{\gd}{\delta}
\newcommand{\gep}{\varepsilon}
\newcommand{\gj}{\theta}
\newcommand{\gl}{\lambda}
\newcommand{\gm}{\mu}
\newcommand{\gs}{\sigma}
\newcommand{\gr}{\rho}
\newcommand{\gt}{\tau}
\newcommand{\gx}{\xi}
\newcommand{\gy}{\psi}
\newcommand{\gz}{\zeta}
\newcommand{\gD}{\Delta}
\newcommand{\gF}{\Phi}
\newcommand{\gL}{\Lambda}
\newcommand{\gO}{\Omega}
\newcommand{\gP}{\Pi}
\newcommand{\gU}{\Upsilon}
\newcommand{\gY}{\Psi}
\newcommand{\C}[1]{{\mathcal{#1}}} 
\newcommand{\D}[1]{{\mathbb{#1}}} 
\newcommand{\E}[1]{{\mathscr{#1}}} 
\newcommand{\refS}[1]{Section~\ref{#1}} 
\newcommand{\refT}[1]{Theorem~\ref{#1}}
\newcommand{\refL}[1]{Lemma~\ref{#1}}
\newcommand{\refP}[1]{Proposition~\ref{#1}}
\newcommand{\refC}[1]{Corollary~\ref{#1}}
\newcommand{\refE}[1]{\eqref{#1}}
\newcommand{\refF}[1]{Figure~\ref{#1}}
\newcommand{\refR}[1]{Remark~\ref{#1}}
\newcommand{\ol}{\overline}
\newcommand{\q}{\quad}
\newcommand{\rs}{\hat{\mathbb{C}}}
\newcommand{\ra}{\rightarrow}
\renewcommand{\Im}{\operatorname{Im}}
\renewcommand{\Re}{\operatorname{Re}}
\DeclareMathOperator{\Dom}{Dom}   
\newcommand{\cp}{\textup{cp}}
\newcommand{\cv}{\textup{cv}}
\newcommand{\IS}{\mathcal{I}\hspace{-1pt}\mathcal{S}}
\newcommand{\rr}{\operatorname{\mathcal{R}}}
\newcommand{\irr}{\mathrm{HT}_{N}}
\newcommand{\interior}{\operatorname{int}\,}
\newcommand{\ex}{\operatorname{\mathbb{E}xp}}
\newcommand{\eps}{\varepsilon}
\newcommand{\vfi}{\varphi}
\newcommand{\ea}{e^{2\pi \ga i}}
\newcommand{\co}[1]{^{\circ {#1}}}
\newcommand{\QIS}{\mathcal{Q}\hspace*{-.5pt}\mathcal{I}\hspace{-.5pt}\mathcal{S}}
\begin{document}
\title[Topology of irrationally indifferent attractors]
{Topology of irrationally indifferent attractors}
\alttitle{Topologie des attracteurs irrationnellement indifférents}
\author[Davoud Cheraghi]{Davoud Cheraghi}
\email[Davoud Cheraghi]{d.cheraghi@imperial.ac.uk}
\address[Davoud Cheraghi]{Department of Mathematics, Imperial College London, London SW7 2AZ, UK}
\subjclass{37F50 (Primary), 37F10, 46T25 (Secondary)}
\date{\today}

\begin{abstract}
We study the post-critical set of a class of holomorphic maps with an irrationally indifferent fixed point. 
We prove a trichotomy for the topology of the post-critical set based on the arithmetic of the rotation number 
at the fixed point.
The only possibilities are Jordan curve, one-sided hairy Jordan curve, and Cantor bouquet. 
This explains the degeneration of the closed invariant curves inside the Siegel disks, as one varies the 
rotation number. 
\end{abstract}

\begin{altabstract}
Nous étudions l'ensemble post-critique d'une classe d'applications holomorphes avec un point fixe indifférent irrationnel.
Nous prouvons une trichotomie pour la topologie de l'ensemble post-critique basée sur l'arithmétique du 
nombre de rotation au point fixe.
Les seules options sont une courbe de Jordan, une courbe de Jordan velue unilat\'erale et un bouquet de Cantor. 
Cela explique la dégénérescence des courbes invariantes fermées à l'intérieur des disques de Siegel, lorsque l'on 
fait varier le nombre de rotation.
\end{altabstract}

\maketitle

\numberwithin{equation}{section}

\renewcommand{\thethm}{\Alph{thm}}
\section{Introduction}\label{S:intro}
\subsection{Irrationally indifferent attractors}
Let
\begin{equation}\label{E:form-maps}
f(z)=e^{2\pi i \ga} z+ O(z^2)
\end{equation}
be a germ of a holomorphic map defined near $0 \in \mathbb{C}$, 
with $\ga \in \D{R} \setminus \D{Q}$. 
The fixed point at $0$ is called \textbf{irrationally indifferent}. 
It is known that the local dynamics of $f$ near $0$ depends on the arithmetic nature of $\ga$ in a delicate 
fashion. 
By classical results of Siegel \cite{Sie42} and Brjuno \cite{Brj71}, if $\ga$ satisfies an arithmetic condition, 
now called \textbf{Brjuno type}, $f$ is conformally conjugate to the rotation by $2\pi \alpha$ near $0$.
The maximal domain of linearisation (conjugacy) is called the \textbf{Siegel disk} of $f$ at $0$, and is denoted 
by $\gD(f)$ here. 
Within $\gD(f)$, the local dynamics is trivial; any orbit in $\gD(f)$ is dense in an invariant 
analytic Jordan curve. 
On the other hand, in a remarkable development \cite{Yoc95}, Yoccoz showed that if $\ga$ is not a Brjuno 
number, the quadratic polynomial 
\[P_\alpha(z)= e^{2\pi i\ga} z+z^2\] 
is not linearisable at $0$. 
Despite that, Perez-Marco \cite{PM97} showed that there remains a non-trivial local invariant set at $0$. 
However, the topology of the local invariant set, and the local dynamics near $0$ remained mysterious, even for 
$P_\ga$. 
In this paper, for the first time, we explain the delicate topological structure of the (local) attractor, and the 
dynamics of the map on it. 

When $f$ is a polynomial or a rational function, the irrationally indifferent fixed point at $0$ influences the 
global dynamics of $f$. 
By the classical results \cite{Fat19,Man87}, there is at least a recurrent critical point of $f$ which ``interacts'' 
with the fixed point at $0$. For any such critical point $c_f$, we define 
\[\Lambda(c_f)= \overline{\textstyle{\bigcup_{i\geq 1}} f^{\circ i}(c_f)}.\] 
When $f$ is linearisable at $0$, the boundary of $\gD(f)$ is contained in $\Lambda(c_f)$, 
and when $f$ is not linearisable at $0$, then $0 \in \Lambda(c_f)$. 
The set $\Lambda(c_f)$ is part of the post-critical set of $f$, which is the closure of the orbits of all 
critical values of $f$. 
By a general result in holomorphic dynamics \cite{Ly83b}, unless the Julia set is equal to the whole 
Riemann sphere, for Lebesgue almost every $z$ in the Julia set of $f$, the spherical distance between $f\co{k}(z)$
and the post-critical set of $f$ tends to $0$ as $k \to \infty$. 

For ``badly approximable'' $\alpha$, $\Lambda(c_f)$ is well understood over the last four decades.
The main method is an ingenious surgery procedure, which is introduced by Douady \cite{Do86} for quadratic 
polynomials, Zakeri \cite{Za99} for cubic polynomials, Shishikura (unpublished work) for all polynomials, 
and Zhang \cite{Zha11} for all rational functions. 
Through the surgery, the problem is linked to the dynamics of analytic circle maps, where 
the works of Herman, Yoccoz and Swianek \cite{Her79,Yoc84b, Swi98} play a key role. 
The culmination of those works shows that when $\alpha$ is bounded type and $f$ is a rational function, 
$c_f \in \partial \gD(f)$ and $\Lambda(c_f)=\partial \gD(f)$ is a quasi-circle 
(a Jordan curve with controlled geometry). 
In \cite{McM98}, McMullen developed a renormalisation method to 
show that, among other features, when $\alpha$ is an algebraic number, $\Lambda(c_f)$ enjoys 
rescaling self-similarity at $c_p$. 
In a far reaching generalisation in the quadratic case, Petersen and Zakeri \cite{PZ04} 
employed trans quasi-conformal surgery to show that for almost every $\ga$, 
$c_f \in \partial \gD(f)$ and $\Lambda(c_{P_\alpha})=\partial \gD(P_\ga)$ is a David circle (a generalisation 
of quasi-circle).
 
For ``well-approximable'' $\ga$, the structure of $\Lambda(c_f)$ remained mostly mysterious, 
despite few sporadic surprising results such as \cite{Perez-Marco97,ABC04}.
Computer simulations suggest that large entries in the continued fraction of $\ga$ result in oscillations of 
the invariant curves in $\gD(f)$. 
The size of an entry and its location in the continued fraction of $\ga$, as well as non-linearity of large iterates of $f$,
result in an intricate oscillation of the invariant curves in $\gD(f)$.  
See \refF{F:Oscillations}. 
For $\ga$ with infinitely many extremely large entries, the consecutive oscillations may degenerate 
the closed invariant curves.  
For a class of maps, we explain the degeneration of invariant curves under perturbations of $\ga$. 

\begin{figure}[ht]
\begin{center}
\includegraphics[scale=.5]{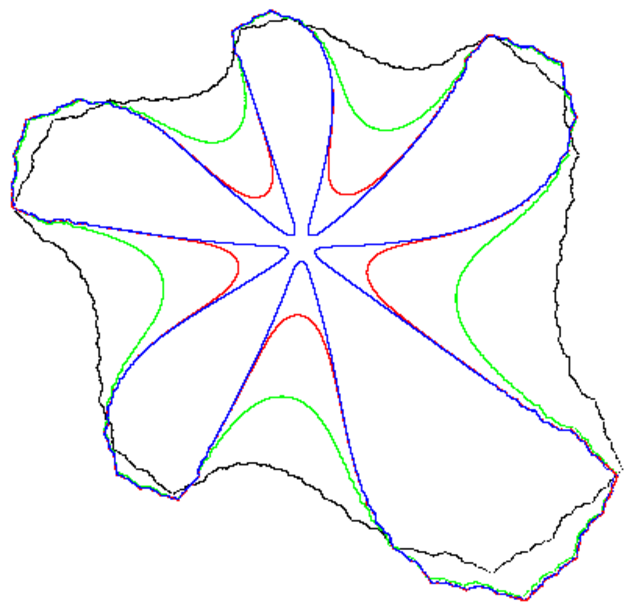} \hspace*{3em}
\includegraphics[scale=.5]{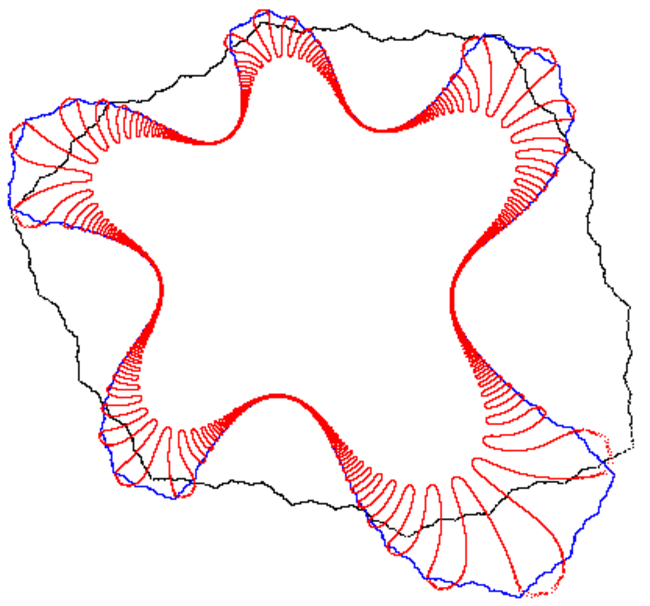}
\caption{Left image: computer simulations of the orbit of $c_{P_\alpha}$ for rotation numbers 
$\alpha=[2,2,\overline{2}]$, $[2,2,10^2, \overline{2}]$, $[2,2,10^4, \overline{2}]$, and $[2,2,10^8, \overline{2}]$. 
Right image: computer simulations of the orbit of $c_{P_\alpha}$ for 
$\alpha=[2,2,\overline{2}]$, $[2,2,10^2, \overline{2}]$, and $[2,2,10^2, 10^8, \overline{2}]$.}
\label{F:Oscillations}
\end{center}
\end{figure}

\subsection{Statements of the results}
Inou and Shishikura in \cite{IS06} introduced a sophisticated renormalisation scheme $(\C{F}, \C{R})$, 
where $\C{F}$ is an infinite dimensional class of maps as in \eqref{E:form-maps}, 
and $\C{R}: \C{F} \to \C{F}$ is a renormalisation operator.  
Every $f \in \C{F}$ has a certain covering structure, and a (preferred) critical point $c_f$. 
The set $\C{F}$ contains (the restriction to a neighbourhood of $0$ of) some polynomials and 
rational functions of arbitrarily large degrees. 
The scheme requires $\ga$ to be of sufficiently \textbf{high type}, that is, $\alpha$ belongs to the set 
\[\irr= \{\gep_0/(a_0+ \gep_1/(a_1+\gep_2/(a_2+\dots))) \mid  \forall n \geq 0, a_n\geq N, \gep_n=\pm 1\},\]
for a suitable $N$. 
In $\irr$, there are $\ga$ of bounded type, as well as $\ga$ with arbitrarily large entries. 

The scheme $(\C{F}, \C{R})$ was successfully employed by Inou and Shishikura to trap the orbit of 
$c_f$ in a dynamically defined neighbourhood of $0$. 
Moreover, they showed that the orbit of $c_f$ is infinite, there are no periodic points in 
$\Lambda(c_f)$, and in particular, $\Lambda(c_{P_\ga})$ is not equal to the Julia set of $P_\ga$. 

In \cite{Che13,Che19} we carried out a detailed quantitative analysis of the renormalisation scheme 
$(\C{F}, \C{R})$, and obtained fine estimates on the changes of coordinates which appear in the renormalisation. 
In \cite{Che23}, we built a toy model for the renormalisation of maps with an irrationally indifferent fixed point. 
We employ those methods to explain the delicate structure of $\Lambda(c_f)$. 

\begin{thm}[trichotomy of irrationally indifferent attractors]\label{T:trichotomy-main-thm}
There is $N\geq 2$ such that for every $\ga \in \irr$ and every $f(z)=e^{2\pi i \ga} z+ O(z^2)$ in the Inou-Shishikura 
class $\C{F}$, one of the following holds: 
\begin{itemize}
\item[(i)] $\ga$ is Herman type, and $\gL(c_f)$ is a Jordan curve enclosing $0$,
\item[(ii)] $\ga$ is Brjuno but not Herman type, and $\gL(c_f)$ is a one-sided hairy Jordan curve enclosing $0$,
\item[(iii)] $\ga$ is not Brjuno type, and $\gL(c_f)$ is a Cantor bouquet at $0$. 
\end{itemize}
The trichotomy also holds for the quadratic polynomials $P_\alpha$, when $\alpha \in \irr$. 
\end{thm}

The set of \textbf{Herman numbers} was discovered by Herman and Yoccoz \cite{Her79,Yoc95-ICM} 
in their landmark studies of the dynamics of analytic circle diffeomorphisms. 
In this paper we do not make any connections to circle maps --
the Brjuno and Herman types naturally come up. 
The set of Herman numbers is complicated to characterise in terms of the arithmetic of $\ga$; 
see \refS{S:arithmetic}. 
But we note that the set of Herman numbers is contained in the set of Brjuno numbers, and both 
sets have full Lebesgue measure in $\mathbb{R}$. 
However, the set of non-Brjuno numbers, and the set of Brjuno but not Herman numbers, 
are both uncountable and dense in $\D{R}$. 
Similarly, the set of $\ga$ corresponding to each of the cases in \refT{T:trichotomy-main-thm} is 
uncountable and dense in $\irr$. 

Cantor bouquets and hairy Jordan curves are universal topological objects like the Cantor sets; 
they are characterised by some topological axioms \cite{AaOv93}. 
Roughly speaking, a Cantor bouquet is a collection of arcs landing at a single point, such that 
every arc is accumulated from both sides by arcs in the collection. 
A (one-sided) hairy Jordan curve is a collection of arcs landing on a dense subset of a Jordan curve 
such that every arc in the collection is accumulated from both sides by arcs in the collection. 
Both sets have empty interior, and necessarily have complicated topologies; have uncountably 
many hairs and are not locally connected.
See \refS{SS:HCS-CB} for the definitions. 

In \refT{T:trichotomy-main-thm}, in cases (ii) and (iii), $c_f$ is an end point of a hair of $\Lambda(c_f)$. 
We also show that in case (iii), the arcs in $\Lambda(c_f)$ land at $0$ at well-defined (distinct) angles. 

\refT{T:trichotomy-main-thm} explains the degeneration of the boundaries of Siegel disks as one 
varies $\ga \in \irr$. 
Either the oscillations diminish and no degeneration occurs, or 
the oscillations build up and reach $0$ in the limit, collapsing onto uncountably many arcs landing 
at $0$ (case (iii)); or oscillations remain short of $0$, but collapse onto uncountably many arcs landing 
on a closed invariant curve (case (ii)). 
These phenomena also happen to the invariant curves within the Siegel disks, 
which give rise to a one-parameter family of closed invariant sets in $\gL(c_f)$, all with the same topology. 

\begin{thm}[degeneration of closed invariant curves]\label{T:degenerations-invariant-curves}
For every $\ga \in \irr$ there is $r_\alpha \geq 0$ such that for every $f(z)=e^{2\pi i \ga} z+ O(z^2)$ 
in the class $\mathcal{F}$, there is a map 
\[\phi_f : [0, r_\alpha] \to \{X \subseteq \Lambda(c_f) \mid X \text{ is non-empty, closed and invariant}\},\]
which is a homeomorphism with respect to the Hausdorff metric on the range. 
Moreover, 
\begin{itemize}
\item[(i)] $\phi_f$ is strictly increasing on $[0,r_\alpha]$, with respect to 
the inclusion in the range;
\item[(ii)] if $\ga$ is not a Brjuno number, $\phi_f(t)$ is a Cantor bouquet for every $t \in (0, r_\alpha]$, 
$\phi_f(0)=\{0\}$, and $\phi_f(r_\ga)= \gL(c_f)$; 
\item[(iii)] if $\ga$ is a Brjuno but not a Herman number, $\phi_f(t)$ is a hairy Jordan curve for all $t \in (0,r_\alpha]$, 
$\phi_f(0)$ is a Jordan curve, and $\phi_f(r_\ga)= \gL(c_f)$. 
\end{itemize}
\end{thm}

The above theorem, and the next results, all apply to the quadratic polynomials $P_\ga$, provided $\ga \in \irr$. 
Evidently, in \refT{T:degenerations-invariant-curves}, $r_\alpha=0$ when $\alpha$ is a Herman number, 
and otherwise $r_\alpha>0$. 
We also characterise the non-empty closed invariant subsets of $\Lambda(c_f)$. 

\begin{thm}[invariant sets in irrationally indifferent attractors]\label{T:dynamics-on-pc}
For every $\ga \in \irr$ and every $f(z)=e^{2\pi i \ga} z+ O(z^2)$ in $\mathcal{F}$, 
$f: \gL(c_f) \to \gL(c_f)$ is a topologically recurrent homeomorphism. 
Moreover, every non-empty closed invariant set in $\Lambda(c_f)$ is equal to the closure of the orbit of some $z \in \gL(c_f)$. 
\end{thm}

A partial result in the direction of \refT{T:trichotomy-main-thm} is obtained by Shishikura 
and Yang \cite{ShY18} around the same time. 
They prove that if $\alpha$ is a Brjuno number of high type, $\partial \gD(f)$ is a Jordan curve, 
and $c_f \in \partial \gD(f)$ iff $\alpha$ is a Herman number. 
These results also follow immediately from \refT{T:trichotomy-main-thm}.
In both cases (i) and (ii), the region inside the unique Jordan curve in $\Lambda(c_f)$ is invariant by $f$, 
and hence it must be $\gD(f)$. 
In case (i), since $c_f$ is recurrent, $c_f \in \gL(c_f)= \partial \gD(f)$, 
and in case (ii), $c_f \notin \partial \gD(f)$ (otherwise the orbit of $c_f$ remains in $\partial \gD(f)$).

\begin{cor}\label{C:jordan-curve}
For any Brjuno $\ga \in \irr$ and any $f(z)=e^{2\pi i \ga} z+ O(z^2)$ in $\mathcal{F}$, 
the boundary of the Siegel disk of $f$ at $0$ is a Jordan curve. 
\end{cor}

\begin{cor}\label{C:optimal-herman}
For any Brjuno $\ga \in \irr$ and any $f(z)=e^{2\pi i \ga} z+ O(z^2)$ in $\mathcal{F}$, 
the boundary of the Siegel disk of $f$ at $0$ contains a critical point of $f$ if and only if $\ga$ is a Herman number. 
\end{cor}

The above corollaries partially confirm conjectures of Herman and Douady on the Siegel 
disks of rational functions, \cite{Her85,Do86}. 
In \cite{Her85}, Herman employs a conformal welding argument of Ghys \cite{Ghy84} 
to show that if $\alpha$ is a Herman number, 
$c_{P_\ga} \in \partial \gD(P_\ga)$. 
On the other hand, Ghys and Herman \cite{Ghy84,Her86b} gave the first examples of 
polynomials having a Siegel disk with no critical point on the boundary.  
Based on these results, Herman conjectured in 1985 that \refC{C:optimal-herman} holds for all rational 
functions $f$ of degree $\geq 2$ and all irrational numbers $\alpha$. 
Using an elegant Schwarzian derivative argument, Graczyk and Swiatek in \cite{GrSw03} proved 
a general result, which implies in particular that if $f$ is a rational function or an entire function 
with degree $\geq 2$ and $\alpha$ is bounded type, there must be a critical point on the boundary of $\gD(f)$.
The result of Herman is extended to cubic polynomials in \cite{ChRo16}. 
We note that Shishikura and Yang in \cite{ShY18} have a fundamentally different approach to the proofs of 
Corollaries \ref{C:jordan-curve} and \ref{C:optimal-herman}. 
They work directly in the renormalisation tower of $f$. 
Those corollaries were not the main purpose of this paper, but a bi-product of studies towards explaining the 
global dynamics of non-linearisable maps.

In \cite{Che23} we conjectured that the trichotomy in \refT{T:trichotomy-main-thm}, as well as the dynamical 
features in Theorems \ref{T:degenerations-invariant-curves} and \ref{T:dynamics-on-pc} 
hold for all irrational numbers $\ga$ and all rational functions $f$. 
In particular, the conjectures of Douady and Herman follow from the conjecture on the trichotomy of the 
irrationally indifferent attractors.
See \refS{SS:methodology} for some justification of our conjectures. 

By a general result of Perez-Marco  \cite{PM97}, the invariant sets within Siegel disks do not disappear 
under perturbations of $\ga$.  
That is, for every $f$ as in \eqref{E:form-maps}, there are non-trivial, compact, connected, invariant sets 
containing $0$, called Siegel compacta or hedgehog. 
They built examples of non-linearisable $f$ with interesting pathological behaviour, such as examples with no 
small cycles \cite{PM93}, and examples with uncountably many conformal symmetries \cite{PerezMarco95}. 
In \cite{Che11}, Cheritat uses similar methods to build an example with $\partial \gD(f)$ non-locally connected. 
In \cite{Bis16,Bi2008}, Biswas builds hedgehogs with empty interior but positive area, and also 
hedgehogs with Hausdorff dimension 1. 
Such behaviours are not expected for rational functions. 
Indeed, by \cite{Man87}, when $f$ is a polynomial or a rational function, every Hedgehog of $f$ is 
contained in the post-critical set of $f$. This also holds for maps in $\C{F}$, \cite{AC18}. 
By explaining the topology of the post-critical set, we have shown that those pathological behaviours do not 
occur for many classes of rational functions. 

The renormalisation scheme $(\C{F}, \C{R})$ is a sophisticated but powerful tool. 
It has helped with making substantial progress in our understanding of the dynamics of maps with 
an irrationally indifferent fixed point. 
In \cite{BC12}, this was employed to prove the upper simi-continuity of $\Lambda(c_f)$ at every $f \in \C{F}$ 
with bounded type rotation number $\ga$. 
That was a main ingredient in the remarkable work of Buff and Cheritat on the existence of $P_\alpha$ with 
positive area Julia sets. 
In \cite{Che13,Che19}, we proved that $\Lambda(c_f)$ has zero area, and depends upper simi-continuously at every 
$f \in \C{F}$. 
Combining those results with the current paper, we now fully understand the topological behaviour of typical orbits of 
$P_\ga$, for all $\ga \in \irr$. That is, for almost every $z$ in the Julia set of $P_\ga$, 
the set of accumulation points of the orbit of $z$ is equal to $\gL(c_{P_\ga})$. 
In particular, the basin of attraction of any closed invariant set strictly contained in $\gL(c_{P_\ga})$ 
has zero area.
In \cite{AC18}, the statistical behaviour of the orbits is explained by showing that 
$f: \Lambda(c_f) \to \Lambda(c_f)$ is uniquely ergodic. 
On the other hand, in \cite{CC15}, the Marmi-Mousa-Yoccoz conjecture, which provides a fine estimate on the sizes 
of the Siegel disks in terms of the arithmetic of $\ga$, is confirmed for the class $\C{F}$.
The flexibility of the scheme $(\C{F}, \C{R})$ allows one to make perturbations of $\ga$ into 
complex numbers.  
In \cite{ChSh14}, the author and Shishikura prove the hyperbolicity of the renormalisation operator for satellite types, 
and conclude the local connectivity of the Mandelbrot set at some infinitely renormalisable parameters of satellite type. 
In \cite{AvLy22}, Avila and Lyubich combine the upper semi-continuity of $\gL(c_f)$ in \cite{Che19} with 
a random walk argument to prove the existence of Feigenbaum quadratic polynomials with positive area Julia sets. 

\subsection{Methodology}\label{SS:methodology}
We implement an alternative point of view on the use of renormalisation methods for the study of the dynamics.
By employing a toy model for the renormalisation scheme $(\C{F}, \C{R})$, we avoid cumbersome analytic arguments 
and error estimates via geometric constructions.

The toy model consists of a one-parameter family of maps 
$\{T_\ga: \hat{A}_\ga \to \hat{A}_\ga\}_{\ga \in \mathbb{R} \setminus \mathbb{Q}}$, 
where each $\hat{A}_\ga$ is star-like about $0$, 
and $T_\ga$ is a homeomorphism of the form $r e^{i\theta} \mapsto g(r,\theta) e^{i (\theta+2\pi \ga)}$ 
in the polar coordinate.
Moreover, $T_{\ga+1}= T_\ga$ and $\hat{A}_{\ga+1}= \hat{A}_\ga$, for all $\ga \in \mathbb{R}\setminus \mathbb{Q}$. 
The renormalisation is the action of the modular group, which sends $T_\ga: \hat{A}_\ga \to \hat{A}_\ga$ to 
$T_{-1/\ga}: \hat{A}_{-1/\ga} \to \hat{A}_{-1/\ga}$.

To present the alternative point of view, it is necessary to briefly describe how the toy model is built. 
It is convenient to carry out the construction in the $\log$ coordinate 
(where $T_\ga$ is unwrapped).
For each $\ga \in \mathbb{R} \setminus \mathbb{Q}$, we start with a sequence of change of coordinates 
$Y_n: \mathbb{H}_{-1} \to \mathbb{H}_{-1}$, for $n\geq 0$, where $\mathbb{H}_{-1}$ is the upper 
half-plane $\Im w > -1$, with each $Y_n$ not necessarily surjective. 
The maps $Y_n$ must satisfy two functional relations in order to be induced by successive 
applications of a renormalisation operator. Otherwise, there is considerable flexibility in choosing them.
The collection $(Y_n)_{n\geq 0}$ forms a non-autonomous dynamical system, as in \refF{F:renormalisation-diagrams}. 
The set of points in $\mathbb{H}_{-1}$ where the infinite composition 
$\dots \circ Y_2^{-1} \circ Y_1^{-1} \circ Y_0^{-1}$ is defined corresponds to $\hat{A}_\ga$. 
More precisely, it projects via $w \mapsto e^{2\pi i w}$ to define $\hat{A}_\ga$. 
Loosely speaking, $T_\ga$ corresponds to $Y_0 (Y_0^{-1}+1)$ via the same projection. 

The maps $Y_n$ are chosen to uniformly contract the Euclidean metric, 
and map half-infinite vertical lines in $\mathbb{H}_{-1}$ to half-infinite vertical lines; see \refF{F:model-Y_r}. 
Then, the non-autonomous system $(Y_n)_{n\geq 0}$ exhibits Markov behaviour, which is conveniently used 
to study the topology of $\hat{A}_\ga$ and the dynamics of $T_\ga$ on $\hat{A}_{\ga}$.  
The Brjuno and Herman types naturally come up in this setting. 

\begin{figure}[h]
\[\xymatrix{
& \mathbb{H}_{-1} \ar@{-->}[dl]_{e^{2\pi i w}} & \mathbb{H}_{-1} \ar[l]_{Y_0} & \mathbb{H}_{-1} \ar[l]_{Y_1} & 
\quad \cdots \quad \ar[l]_{Y_2} & \mathbb{H}_{-1} \ar[l]_{Y_{n-1}} & \mathbb{H}_{-1} \ar[l]_{Y_n} & 
\quad \cdots \ar[l]_{Y_{n+1}} \\
\hat{A}_{\ga} & \C{M}_{0} \ar@{-->}[dl]_{e^{2\pi i w}} \ar[u]_{U_0} & \C{M}_1 \ar[l]_{\gU_1} \ar[u]_{U_1} & \C{M}_2 \ar[l]_{\gU_1} \ar[u]_{U_2} & 
\quad \cdots \quad \ar[l]_{\gU_2} &  \C{M}_{n-1} \ar[l]_{\gU_{n-1}} \ar[u]_{U_{n-1}} & 
\C{M}_n \ar[l]_{\gU_n} \ar[u]_{U_n} & \quad \cdots \ar[l]_{\gU_{n+1}} \\
\gL(c_f) \cup \gD(f)\ar[u]^U }\]
\caption{Conjugacy of the non-autonomous dynamics of the changes of coordinates in two renormalisation schemes. 
The map $U_0$ projects to $U:\gL(c_f) \cup \gD(f) \to \hat{A}_\ga$ which conjugates $f$ on $\gL(c_f)$
to $T_\ga$ on $\partial \hat{A}_\ga$.} 
\label{F:renormalisation-diagrams}
\end{figure}

The operator $\C{R}$ is based on a geometric construction which involves a coordinate change 
(perturbed Fatou coordinate). 
Successive applications of $\C{R}$ at some $f\in \C{F}$ with $f'(0)=e^{2\pi i \ga}$ 
produces a sequence of changes of coordinate $\gU_n: \C{M}_n \to \C{M}_{n-1}$, for $n\geq 0$. 
When viewed in the $\log$ coordinate (where the dynamics is unwrapped), each $\C{M}_n$ is uniformly 
close to $\mathbb{H}_{-1}$, and each $\gU_n$ is either holomorphic or anti-holomorphic, 
with highly non-linear behaviour. 
Despite that, the maps $\gU_n$ are uniformly contracting with respect to suitable hyperbolic metrics on the regions 
$\C{M}_n$. 

The heart of the argument is to show that the non-autonomous dynamics of $(Y_n)_{n\geq 0}$ 
is conjugate to the non-autonomous dynamics of $(\gU_n)_{n\geq 0}$. 
That is, a collection of homeomorphisms $U_n$, as in \refF{F:renormalisation-diagrams}, so that the 
diagram is fully commutative along solid arrows. 
However, in order to induce a conjugacy between $f$ and $T_\ga$, $(U_n)_{n\geq 0}$ 
must satisfy some circular functional relations due to unwrapping the recurrent dynamics of the underlying maps, 
and also due to the presence of critical points in $(\C{F}, \C{R})$ but not in the toy model. 

To build such a conjugacy $(U_n)_{n\geq 0}$, we need to choose the maps $Y_n$ uniformly close to $\gU_n$. 
That requires some understanding of the coordinate changes $\gU_n$, obtained in \cite{Che19}.
It is worth noting that the detailed information about the locations and geometries of relevant pieces near $0$ 
obtained in \cite{IS06} were not utilised here.
More precisely, given a renormalisation scheme with the qualitative characteristics of $(\C{F}, \C{R})$, 
one only needs to verify \refP{P:gc-vs-Y-value}, \refP{P:asymptotics-similar}, and \refL{L:well-inside-lifts} 
about the changes of coordinates in that renormalisation. 
Hence, the trichotomy conjecture.

The Markov system $(Y_n)_{n\geq 0}$ provides partitions of the phase spaces $\mathbb{H}_{-1}$, 
shrinking to half-infinite vertical lines. 
We build corresponding partitions for the system $(\gU_n)_{n\geq 0}$, which are in the spirit of Yoccoz puzzle 
pieces with boundary markings. 
The first parametrised arc for the partitions is obtained from the limit of certain hyperbolic 
geodesics (suitably re-parametrised by the maps $Y_n$). 
This gives rise to a collection of curves, one in each $\C{M}_n$, which collectively enjoy equivariant 
properties with respect to the dynamics of $(\gU_n)_{n\geq 0}$ and $(Y_n)_{n\geq 0}$. 
By employing the circular functional relations imposed on the conjugacy $(U_n)_{n\geq 0}$, further 
parametrised arcs are built. 
Those arcs divide the sets $\C{M}_n$ into partitions (not necessarily shrinking to points). 

To complete the argument, we build a sequence of partial conjugacies by mapping the corresponding 
partition pieces one to another. 
Contrary to $\gU_n$, the maps $Y_n$ are not conformal, and their long compositions may degenerate the 
complex structure. 
This leads to the degeneration of the sequence of partial conjugacies. 
However, the degenerations only occur transverse to the hairs in $\gL(c_f)$, and the partial conjugacies 
form Cauchy sequences along the hairs. 

There are a number of advantages in explaining the dynamics of $f$ through a toy model. 
It allows one to study the delicate role of the arithmetic properties of $\ga$ in the simpler setting of the model, 
while only dealing with the highly distorting coordinate changes in another setting. 
The construction of the conjugacy does not require detailed understanding of the dynamics of the underlying maps, 
and simultaneously works rotation numbers of different type. 
Moreover, the toy model allows us to build puzzle partitions enjoying equivariant properties with respect to the 
renormalisation. 
This paves the way for further progress on the topic, see for instance, \cite{CDY20}. 

\setcounter{tocdepth}{2}
\tableofcontents

\renewcommand{\thethm}{\thesection.\arabic{thm}}

\section{Arithmetic classes of Brjuno and Herman}\label{S:arithmetic}
In this section we define the arithmetic classes of Brjuno and Herman. 
The definition requires the action of the modular group $\mathrm{PGL}(2, \D{Z})$ on $\D{R}$.
To study the action of this group, one may choose a fundamental interval for the action of $z \mapsto z+1$
and study the action of $z \mapsto 1/z$ on that interval. 
Due to a feature of the near-parabolic renormalisation, it is natural to work with the fundamental interval 
$(-1/2,1/2)$ for the translation. 
That is because, as we shall see in \refS{S:NP-renormalisation-scheme}, the scheme works for rotation numbers 
close to $0$.
This choice of the fundamental interval leads to a modified notion of continued fractions, which we briefly 
present below. 

\subsection{Modified continued fraction}
\label{SS:modified-fractions}
Let us fix an irrational number $\ga \in \D{R}$. 
For $x$ in $\D{R}$, define $d(x, \D{Z})= \min_{k\in \D{Z}} |x-k|$. 
Let $\ga_0=d(\ga, \D{Z})$, and for $n\geq 1$, inductively define the numbers 
\begin{equation}\label{E:rotations-rest}
\ga_{n+1}=d(1/\ga_n, \D{Z}). 
\end{equation} 
For our convenience in future formulae, we let $\ga_{-1}=+1$.
There are unique integers $a_n$, for $n\geq -1$, and $\gep_n \in \{+1, -1\}$, for $n\geq 0$, such that 
\begin{equation}\label{E:rotations-relations}
\ga= a_{-1}+ \gep_0 \ga_0 \quad \text{ and } \quad  1/\ga_n= a_n + \gep_{n+1} \ga_{n+1}. 
\end{equation}
Evidently, for all $n\geq 0$,
\begin{equation}
1/\ga_n \in (a_n-1/2, a_n+1/2), \quad a_n\geq 2, \quad \tand \quad 
\gep_{n+1}= 
\begin{cases}
+1 & \text{if } 1/\ga_n \in (a_n, a_n+1/2), \\
-1 & \text{if } 1/\ga_n \in (a_n-1/2, a_n).
\end{cases}
\end{equation}
The sequences $a_n$ and $\gep_n$ provide the infinite continued fraction 
\begin{equation*}\label{E:modified-expansion-alpha}
\ga=a_{-1}+\cfrac{\gep_0}{a_0+\cfrac{\gep_1}{a_1+\cfrac{\gep_2}{a_2+\dots}}}.
\end{equation*}
The best rational approximants of $\ga$, or the convergents of $\ga$, are defined as  
\[\frac{p_n}{q_n}=a_{-1}+\cfrac{\gep_0}{a_0+\cfrac{\gep_1}{\ddots + \cfrac{\gep_n}{a_n}}}, \;  \tfor n\geq -1.\] 
We assume that $p_n$ and $q_n$ are relatively prime, and $q_n >0$. 

\subsection{Brjuno numbers}\label{SS:Brjuno-numbers}
By a careful study of the Siegel's approach in \cite{Sie42}, Brjuno in \cite{Brj71} showed that the convergence 
of the infinite series $\sum_{n=-1}^{+\infty} q_n^{-1} \log q_{n+1}$ for $\alpha$ 
is sufficient for the analytic linearisation of the germs $f(z)=e^{2\pi i \ga} z+ O(z^2)$ near $0$.
Siegel-Brjuno approach is based on estimating the coefficients of the formal conjugacy, but do not involve any 
notion of renormalisation. 

Yoccoz in \cite{Yoc95} carried out a geometric approach to the linearisation problem based on a notion of 
renormalisation introduced by Douady-Ghys.  
Thanks to his work, a natural way to look into the Brjuno condition is through a function
which enjoys remarkable equivariant properties with respect to the action of $\mathrm{PGL}(2, \D{Z})$.
That is, the Brjuno function is defined (by Yoccoz) as 
\begin{equation}\label{E:Brjuno-condition}
\textstyle{ \C{B}(\ga)= \sum_{n=0}^\infty \gb_{n-1} \log \ga_n^{-1}}
\end{equation} 
where $\gb_n= \prod_{i=-1}^n \ga_i$, for $n \geq -1$. 
The function $\C{B}$ is defined on $\D{R} \setminus \D{Q}$, and takes values in $(0, +\infty]$.
It satisfies the remarkable relations 
\begin{equation}\label{E:Brjuno-functional-equations}
\begin{gathered}
\C{B}(\ga)= \C{B}(\ga+1)= \C{B}(-\ga), \; \tfor \ga \in \D{R}\setminus \D{Q}, \\
\C{B}(\ga)= \ga \C{B}(1/\ga)+ \log 1/\ga, \; \tfor \ga \in (0,1/2) \setminus \D{Q}.
\end{gathered}
\end{equation}
As shown by Yoccoz, $| \sum_{n=-1}^{+\infty} q_n^{-1} \log q_{n+1} - \C{B}(\ga) |$ is uniformly 
bounded from above independent of $\ga$. 
Thus, an irrational number $\ga$ is a \textbf{Brjuno number} iff $\C{B}(\ga)<+\infty$.

For a generic choice of $\ga \in \D{R}$, $\C{B}(\ga)=+\infty$.
The function $\C{B}$ is highly irregular. One may refer to \cite{MMY97,MMY01,JM18}, and the extensive 
list of references therein, for detailed analysis of the regularity properties of the Brjuno function. 
The irregularity features of the Brjuno function is somehow reflected in the sizes of the Siegel disks as a 
function of the rotation number, as suggested in \cite{MMY97} and studied in \cite{BC06b,CC15}.

\begin{rem}
Besides the quadratic polynomials, the optimality of the Brjuno condition has been 
(re)confirmed for several classes of maps in 
\cite{PM93,Pe2001,Ge01,BC04,Ok04,Ok2005,FMS2018,Che19}. 
But in its general form for polynomial and rational functions remains a significant challenge.  
For progress on the linearisation problem in higher dimensions one may refer to
\cite{Stolo2000,Ge2007,Rong2008,YoGr2008,Ra2010,BZ2013,GiLoSa2015},
for twist maps refer to \cite{BeGe2001,Po2010}, see also 
\cite{Carletti2000,Lindahl2004,MS2011} and the references therein.
\end{rem}

\subsection{Herman numbers}\label{SS:Herman-numbers}
The problem of analytic linearisation of orientation-preserving 
diffeomorphisms of the circle $\D{R}/\D{Z}$ was first successfully studied by Arnold  \cite{Ar61} 
for maps close to rigid rotations. 
In \cite{Her79}, Herman presented a systematic study of the problem, and presented a rather 
technical arithmetic condition which guaranteed the analytic linearisation.
Later, Yoccoz made improvements in the work of Herman and 
identified the optimal arithmetic condition for the linearisation, which he called Herman type, 
in honour of the work of Herman. 

In \cite{Yoc02}, Yoccoz gives several characterisations of the Herman numbers. 
We present the most relevant of those in our setting. 
To do that, we need to consider the functions $h_r: \D{R} \to (0, +\infty)$, for $r\in (0,1)$:
\[h_r(y)= 
\begin{cases}
r^{-1} (y- \log r^{-1} +1)  & \tif y \geq \log r^{-1}, \\
e^{y} & \tif y \leq \log r^{-1}.
\end{cases}
 \]
Each $h_r$ is a $C^1$ monotone map, which satisfies $y+1 \leq h_r(y) \leq e^y$ for all $y \in \D{R}$, and 
$h_r'(y)\geq 1$ for all $y\geq 0$. 
An irrational number $\ga$ is a \textbf{Herman number} if and only if for all $n\geq 0$ there is $m\geq n$ such that 
\[h_{\ga_{m-1}} \circ \dots \circ h_{\ga_n} (0)\geq \C{B}(\ga_{m}).\]
In the above definition, the composition $h_{\ga_{m-1}} \circ \dots \circ h_{\ga_n}$ is understood as the
identity map when $m=n$, and as $h_{\ga_n}$ when $m=n+1$. 

The arithmetic characterisation of the Herman numbers by Yoccoz in \cite{Yoc02} uses the standard 
continued fraction. 
That is, he works with the interval $(0,1)$ for the action of $z \mapsto z+1$. 
The above form of the Herman numbers in terms of the modified continued fractions is established  
in \cite{Che23}. 

One may see that every Herman number is a Brjuno number, the set of Herman numbers is invariant under the 
action of $\mathrm{PGL}(2,\D{Z})$, and every Diophantine number is of Herman type. 
In particular, the set of Herman numbers and the set of Brjuno numbers have full Lebesgue measure in $\D{R}$. 
On the other hand, there is an uncountable dense set of irrational numbers which are of Brjuno but not Herman type. 

Although Herman did not have the optimal characterisation for the linearisation of analytic circle diffeomorphisms, 
he used the linearisation property of circle maps to show that if $\ga$ satisfies that optimal condition, 
the critical point of $e^{2\pi i \ga} z+ z^2$ must lie on the boundary of the Siegel disk \cite{Her85}. 
His argument also applies to polynomials with a single critical point of higher orders. 
This result has been extended to cubic polynomials in \cite{ChRo16}.
On the other hand, Ghys and Herman built the first examples of polynomials with a Siegel disk whose boundary does 
not contain any critical points.  
Until this paper, and \cite{ShY18}, it was not known how the arithmetic condition of Herman is related to 
the presence of critical points on the boundary of Siegel disks.

\begin{rem}\label{R:high-type-conditions} 
The set of high type numbers $\irr$ in the modified continued fraction may be strictly larger than the set of 
high type numbers in the standard continued fraction. 
To be precise, let $\irr^s$ denote the set of irrational numbers $\ga$ whose entries 
$\tilde{a}_n$ in the standard continued fraction are at least $N$, for all $n\geq 0$. 
If $\tilde{a}_n\geq 2$ for all $n\geq 0$, then $a_n=\tilde{a}_n$ and $\gep_n=+1$, for all $n\geq 0$. 
This shows that for $N\geq 2$, $\irr^s \subseteq \irr$. 
But in general, $\irr$ is not contained in $\mathrm{HT}^s_{N-1}$, or in $\mathrm{HT}^s_{N-2}$, etc. 
Indeed, for any $N$, an element of $\irr$ may have infinitely many $+1$ entries in its standard continued fraction. 
In this sense, the theorems and corollaries stated in the introduction are slightly more general than the ones 
stated in \cite{ShY18}.
\end{rem} 

\begin{rem}
There are $\ga$ in $\irr$ which do not satisfy the Petersen-Zakeri condition in \cite{PZ04}. 
Thus, case (i) in \refT{T:trichotomy-main-thm} applies to some rotation numbers outside the Petersen-Zakeri class. 
\end{rem} 
\section{Topological model for the post-critical set}\label{S:arithmetic-model}
In this section we present the topological model for the post-critical set and the map on it.
This is a brief summary of the detailed construction in \cite{Che23}. 
We present the key features which will be used in this paper. 

A renormalisation scheme is often given as a class of maps, and a renormalisation operator which preserves that class of maps. 
The renormalisation operator involves a change of coordinates (rescaling). 
However, the approach taken to build the toy model for the renormalisation works in a different fashion. 
We start by defining a sequence of changes of coordinates first, and then build a map so that those changes of coordinates 
appear in the consecutive renormalisation of that map. 
We briefly present this below. 

\subsection{Model for the changes of coordinates}\label{SS:coordinate-model}
Consider the set  
\[\D{H}'= \{w\in \D{C} \mid \Im w > -1 \}.\]
For $r \in (0,1/2]$, define $Y_r : \ol{\D{H}'} \to \D{C}$ as 
\[Y_r(w)= r\Re w + 
\frac{ i }{2\pi} \log \Big |\frac{e^{-3\pi r}- e^{-\pi r  i }e^{-2\pi r i  w}}{e^{-3\pi r}- e^{\pi r  i }}\Big|.\]
We have $Y_r(0)=0$, $Y_r$ is continuous on $\D{H}'$, and real analytic in the variables $\Re w$ and $\Im w$. 
It maps vertical lines in $\D{H}'$ to vertical lines. 
But, it maps horizontal lines in $\D{H}'$ to non-straight curves which are $1/r$-periodic in $\Re w$. 
In particular, $Y_r$ is not conformal for any value of $r\in (0, 1/2]$ (The map degenerates the conformal structure as $r \to 0$). 
Despite that, it is proved fundamentally useful when compared to the conformal changes of coordinates in the 
near-parabolic renormalisation. 
\refF{F:model-Y_r} shows the behaviour of $Y_r$ on horizontal and vertical lines.

\begin{figure}[ht]
\begin{center}
\includegraphics[width=0.8\textwidth]{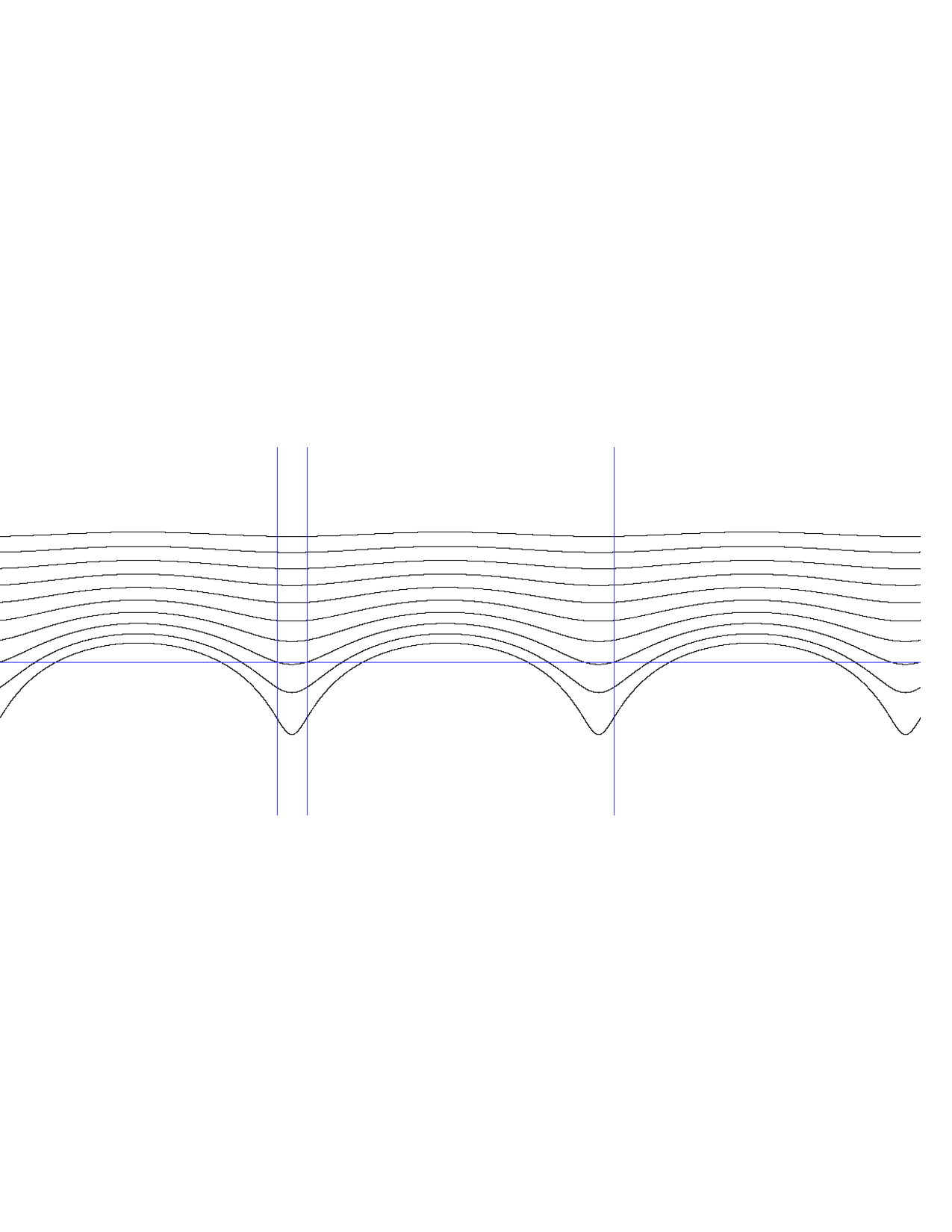}
\caption{The black curves are the images of some horizontal lines by $Y_r$. 
The vertical lines in blue, from left to right, are the images of the vertical lines $\Re w=-1$, $\Re w=0$, 
and $\Re w=1/\ga$, under $Y_r$. Here, $r=1/(10+1/(1+1/(1+\dots)))$.}
\label{F:model-Y_r}
\end{center}
\end{figure}

We summarise the key properties of $Y_r$ in the next proposition. 
Recall the map $h_r$ employed in the characterisation of the Herman numbers in \refS{S:arithmetic}.

\begin{propo}\label{P:coordinate-properties}
For every $r \in (0, 1/2]$ the following properties hold. 
\begin{itemize}
\item[(i)] The map $Y_r$ is injective on $\ol{\D{H}'}$, and $Y_r(\ol{\D{H}'}) \subset \D{H}'$.  
\item[(ii)] For every $w\in \ol{\D{H}'}$, 
\[Y_r(w+1/r) = Y_r(w)+1,\] 
\item[(iii)] For every $t \geq -1$, 
\[Y_r( i  t+ 1/r-1)= Y_r( i  t)+ 1- r,\]
\item[(iv)] For all $w_1, w_2$ in $\ol{\D{H}'}$,
\[|Y_r(w_1)- Y_r(w_2)| \leq 0.9 |w_1-w_2|,\]
\item[(v)] For all $y \geq 1$, 
\[|2 \pi \Im Y_r( i y/(2\pi)) - h_r^{-1}(y)| \leq \pi,\] 
\item[(vi)] For all $y \geq 0$, 
\[2\pi ry+ \log (1/r) - 4 \leq 2\pi \Im Y_r (1/(2r)+iy) \leq 2\pi ry + \log (1/r) + 2.\]
\end{itemize}
\end{propo}

By item (v) in the above proposition, $Y_r$ closely traces the behaviour of $h_r^{-1}$ on the imaginary axis.
By item (vi), $Y_r$ mimics the remarkable functional relation for the Brjuno function in 
\eqref{E:Brjuno-functional-equations}.

\subsection{Successive changes of coordinates}\label{SS:successive-coordinates}
Recall the sequence $\{\ga_n\}_{n=0}^\infty$ introduced in \refS{SS:modified-fractions}. 
Let $s(w)=\ol{w}$ denote the complex conjugation map. 
For $n\geq0$ we define $Y_n: \overline{\mathbb{H}'} \to \mathbb{H}'$ as 
\begin{equation}\label{E:Y_n}
Y_n(w) = 
\begin{cases}
Y_{\ga_n}(w) & \tif \gep_n=-1,\\
- s\circ Y_{\ga_n}(w)  & \tif \gep_n=+1. 
\end{cases}
\end{equation}
Each $Y_n$ is either orientation preserving or reversing, depending on $\gep_n$.

For a given set $X \subset \D{C}$, let us use the notation \[i X= \{ i  x \mid x\in X \}.\]
For $n\geq 0$, we have
\begin{equation} \label{E:invariant-imaginary-line}
Y_n( i  [-1, +\infty)) \subset   i  (-1, +\infty),\; Y_n(0)=0.
\end{equation}
By \refP{P:coordinate-properties}-(ii), for all $n\geq 0$ and all $w\in \ol{\D{H}'}$, 
\begin{equation} \label{E:Y_n-comm-1}
Y_n(w+1/\ga_n) = 
\begin{cases}
Y_n(w)+1 & \tif \gep_n=-1,\\
Y_n(w)-1 & \tif \gep_n=+1. 
\end{cases}
\end{equation}
Also, by the same proposition, for all $n\geq 0$ and all $t \geq -1$, 
\begin{equation}\label{E:Y_n-comm-2}
Y_n( i  t+ 1/\ga_n-1)= 
\begin{cases}
Y_n( i  t)+ (1-\ga_n) & \tif \gep_n=-1, \\
Y_n( i  t) +(\ga_n-1) & \tif \gep_n=+1,
\end{cases}
\end{equation}
and for all $n\geq 0$ and all $w_1, w_2$ in $\ol{\D{H}'}$, we have 
\begin{equation}\label{E:uniform-contraction-Y}
|Y_n(w_1)- Y_n(w_2)| \leq 0.9 |w_1-w_2|.
\end{equation}

\subsection{The straight topological model}\label{SS:straight-model}
For $n\geq 0$, let 
\begin{equation}\label{E:M_n-J_n-K_n}
\begin{gathered}
M_n^0 = \{w\in \ol{\D{H}'} \mid \Re w\in [0, 1/\ga_n]\}, \\
K_n^0 =  \{w\in M_n^0 \mid \Re w \in [0, 1/\ga_n-1] \}, \\
J_n^0 =  \{w\in M_n^0 \mid \Re w \in [1/\ga_n-1, 1/\ga_n]\}. 
\end{gathered}
\end{equation}
We inductively define the sets $M_n^j$, $J_n^j$, and $K_n^j$, for $j \geq 1$ and $n\geq 0$. 
Assume that $M_n^j$, $J_n^j$, and $K_n^j$ are defined for some $j \geq 0$ and all $n \geq 0$. 
We define these sets for $j+1$ and all $n\geq 0$ as follows. 
Fix an arbitrary $n \geq 0$. 
If $\gep_{n+1}=-1$, let 
\begin{equation}\label{E:M_n^j--1}
M_n^{j+1} 
= \textstyle{\bigcup_{l=0}^{a_n-2}} \big( Y_{n+1} ( M_{n+1}^j)+ l \big)  \bigcup \big( Y_{n+1}(K_{n+1}^j)+ a_n-1\big).
\end{equation}
If $\gep_{n+1}=+1$, let 
\begin{equation}\label{E:M_n^j-+1}
M_n^{j+1} 
= \textstyle{\bigcup_{l=1}^{a_n}} \big( Y_{n+1} ( M_{n+1}^j)+ l \big)  \bigcup \big(Y_{n+1}(J_{n+1}^j)+ a_n+1\big ).
\end{equation}
Regardless of the sign of $\gep_{n+1}$, define 
\[K_n^{j+1} =  \{w\in M_n^{j+1} \mid \Re w \in [0, 1/\ga_n-1] \},\]
\[J_n^{j+1} =  \{w\in M_n^{j+1} \mid \Re w \in [1/\ga_n-1, 1/\ga_n]\}.\]
\refF{F:topological-model} presents two generations of these domains.  

\begin{figure}[ht]
\begin{center}
\begin{pspicture}(14,11) 
\epsfxsize=6cm
\rput(3,8){\epsfbox{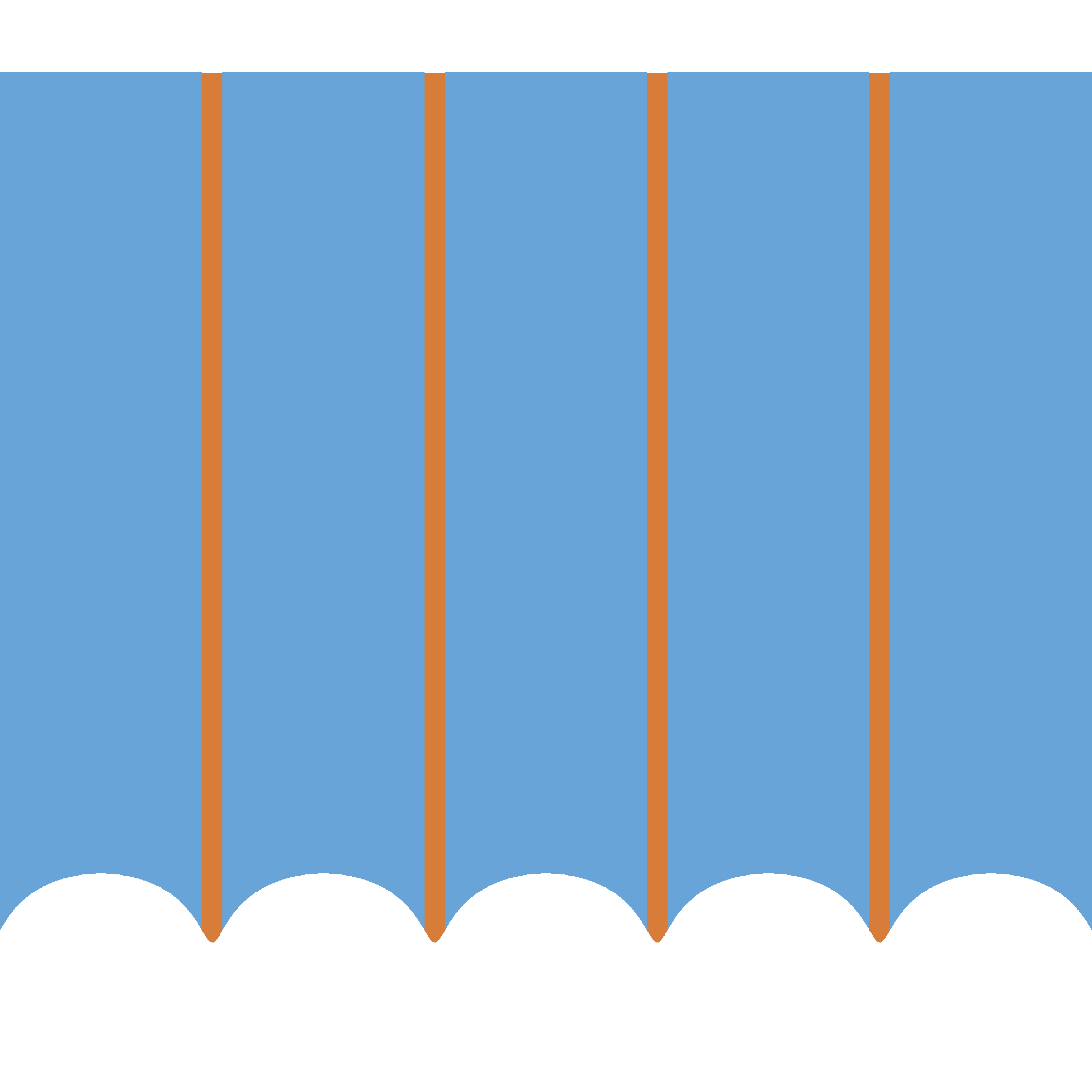}}
\rput(11,8){\epsfbox{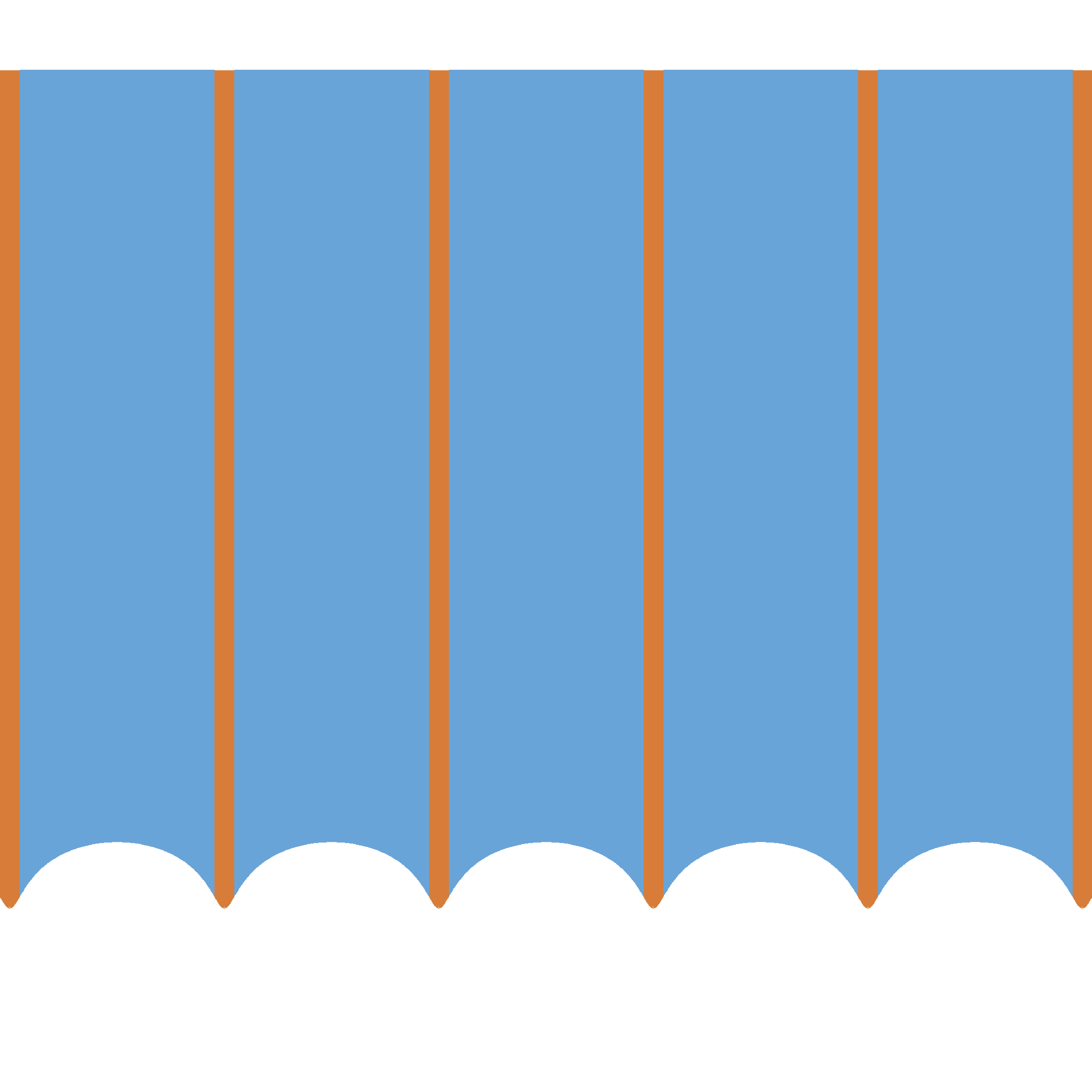}}

\pspolygon[linecolor=newcyan,fillstyle=solid,fillcolor=newcyan](0,0)(5.4545,0)(5.4545,3)(0,3)
\pspolygon[linecolor=neworange,fillstyle=solid,fillcolor=neworange](5.4545,0)(6,0)(6,3)(5.4545,3)
\rput(3,1.5){\small $K_n^0$}
\rput(5.7,1.5){\small $J_n^0$}

\pspolygon[origin={8,0},linecolor=newcyan,fillstyle=solid,fillcolor=newcyan](0,0)(5.4545,0)(5.4545,3)(0,3)
\pspolygon[origin={8,0},linecolor=neworange,fillstyle=solid,fillcolor=neworange](5.4545,0)(6,0)(6,3)(5.4545,3)
\rput(11,1.5){\small $K_n^0$}
\rput(13.7,1.5){\small $J_n^0$}

\psline[linewidth=.4pt]{->}(0,3.05)(0,5.8)
\psline[linewidth=.4pt]{->}(5.95,3.05)(1.25,5.85)

\psline[linewidth=.4pt]{->}(0.05,3.05)(4.9,5.8)
\psline[linewidth=.4pt]{->}(5.4545,3.05)(6,5.8)

\rput(1,5){\small $Y_n$}
\rput(4.8,5){\small $Y_n+a_{n-1}-1$}

\psline[origin={8,0},linewidth=.4pt]{->}(0,3.05)(1.15,5.95)
\psline[origin={8,0},linewidth=.4pt]{->}(5.95,3.05)(0,5.9)

\psline[origin={8,0},linewidth=.4pt]{->}(5.95,3.05)(5.9,5.9)
\psline[origin={8,0},linewidth=.4pt]{->}(5.4545,3.05)(6,5.9)

\rput(8.5,5){\small $Y_n$}
\rput(12.8,5.5){\small $Y_n+a_{n-1}+1$}
\end{pspicture}
\caption{In the left hand column $\gep_n=-1$ and in the right hand column $\gep_n=+1$. 
The sets $K_n^0$ and $J_n^0$ are on the lower row, and the set $M_{n-1}^1$ is on the upper row.}
\label{F:topological-model}
\end{center}
\end{figure}

For all $n \geq 0$ and $j\geq 0$, the sets $M_n^j$, $J_n^j$, and $K_n^j$ are closed and connected subsets of 
$\D{C}$, and are bounded by piece-wise analytic curves.  
Moreover, 
\[\{\Re w \mid w\in M_n^j\}= [0, 1/\ga_n].\]
The functional relations in \eqref{E:Y_n-comm-1} and \eqref{E:Y_n-comm-2} allow us to align the pieces together in the unions \eqref{E:M_n^j--1} 
and \eqref{E:M_n^j-+1}. 
More precisely, we have the following property of $M_n^j$. 

\begin{cor}\label{C:model-almost-periodic}
For every $n\geq 0$ and $j\geq 0$, the following hold:
\begin{itemize}
\item[(i)] for all $w\in \mathbb{C}$ satisfying $\Re w\in [0, 1/\ga_n-1]$, 
$w \in M_n^j$ if and only if $w+1 \in M_n^j$; 
\item[(ii)] for all $t \in \mathbb{R}$, $it \in M_n^j$ if and only if $it +1/\ga_n \in M_n^j$.    
\end{itemize}
\end{cor}

Recall that $\ga_{-1}=+1$. Let $M_{-1}^0=\{w\in \ol{\D{H}'} \mid \Re w \in [0, 1/\ga_{-1}]\}$, and for $j\geq 1$, 
consider
\[M_{-1}^j= Y_0(M_0^{j-1}) + (\gep_0+1)/2.\]
By \refP{P:coordinate-properties}-(i), $M_n^1 \subset M_n^0$, for $n\geq -1$. 
By an inductive argument, this implies that for all $n\geq -1$ and all $j\geq 0$, $M_n^{j+1} \subset M_n^j$.
For $n\geq -1$, we define 
\[M_{n}= \cap_{j\geq 0} M_{n}^j.\] 
Each $M_n$ consists of closed half-infinite vertical lines. The set $M_n$ may or may not 
be connected. 
By \refC{C:model-almost-periodic}, for real $t$, $it \in M_{-1}$ if and only if $(it +1) \in M_{-1}$. 
We may define
\begin{equation}\label{E:A_ga}
\hat{A}_\ga = \left\{s(e^{2\pi i w}) \mid w \in M_{-1} \right \} \cup \{0\}, \qquad \text{and} \qquad A_\ga = \partial  \hat{A}_\ga. 
\end{equation}
The set $A_\ga$ is the topological model for the post-critical set. 
It is defined for irrational values of $\ga$, and depends only on the arithmetic of $\ga$. 

\begin{rem}
An alternative approach for building a topological model for $\gL(f)$ was introduced by Buff and Ch\'eritat 
in 2009 \cite{BC09}.
In their model, they use rational approximation of $\ga$, and some conformal changes of coordinates, 
in order to build topological objects invariant for parabolic maps.
Then, the model for irrational values of $\ga$ is obtained from taking Hausdorff limits of those objects. 
The loss of control along the limit presents obstacles in further study of that model.
\end{rem}

\subsection{Hairy Cantor sets and Cantor bouquets}\label{SS:HCS-CB}
A \textbf{Cantor bouquet} is any subset of $\mathbb{C}$ which is ambiently homeomorphic to a set of the form 
\begin{equation*}\label{E:straight-cantor-bouquet}
\{re^{2\pi i \gj} \in \D{C} \mid 0 \leq \gj \leq 1, 0 \leq r \leq R(\gj) \},
\end{equation*}
where $R: \D{R}/\D{Z} \to [0, 1]$ satisfies the following:
\begin{itemize}
\item[(a)] $R=0$ on a dense subset of $\D{R}/\D{Z}$,  and $R > 0$ on a dense subset of $\D{R}/\D{Z}$, 
\item[(b)] for each $\gj_0\in \D{R}/\D{Z}$ we have 
\[\limsup_{\gj \to \gj_0^+ } R(\gj) = R(\gj_0) = \limsup_{\gj \to \gj_0^-} R(\gj).\]
\end{itemize}

A \textbf{one-sided hairy Jordan curve} is any subset of $\mathbb{C}$ which is ambiently homeomorphic to 
a set of the form 
\begin{equation*}\label{E:straight-hairy-circle}
\{re^{2\pi i \gj} \in \D{C} \mid 0 \leq \gj \leq 1, 1\leq r \leq 1+ R(\gj) \},
\end{equation*}
where $R: \D{R}/\D{Z} \to [0,1]$ satisfies properties (a) and (b) in the above definition. 

\begin{rem}
The Cantor bouquet and one-sided hairy Jordan curve enjoy similar topological features as the standard Cantor set. 
Under an additional mild condition (topological smoothness) they are uniquely characterised by some 
topological axioms, see \cite{AaOv93}.
\end{rem}

\subsection{Topology of the model}\label{SS:topology-model}

\begin{thm}[\cite{Che23}]\label{T:trichotomy-model}
For every irrational number $\ga$ the following hold.
\begin{itemize}
\addtolength{\itemsep}{.2em}
\item[(i)]If $\ga$ is a Herman number, $A_\ga$ is a closed Jordan curve around $0$. 
\item[(ii)]If $\ga$ is a Brjuno but not a Herman number, $A_\ga$ is a one-sided hairy Jordan curve around $0$. 
\item[(iii)]If $\ga$ is not a Brjuno number, $A_\ga$ is a Cantor Bouquet at $0$. 
\end{itemize}
\end{thm}

In the remaining of \refS{SS:topology-model}, we briefly sketch a proof of the above result, outlining the role of 
the properties of the changes of coordinates stated in \refP{P:coordinate-properties}.
One may skip the rest of this section, and move to \refS{SS:dynamics-model}, without detriment to the 
main purpose of this paper, which is to show that the post-critical set is homeomorphic to $A_\ga$. 

\begin{proof}[Brief sketch of the proof of \refT{T:trichotomy-model}] 
Recall that the sets $M_n^j$ and $M_n$ consist of closed half-infinite vertical lines. 
Each of these sets lies above the graph of a function, which may be conveniently used to study these sets. 
For $n\geq -1$, and $j \geq 0$, define $\mathfrak{b}_n^j:[0, 1/\ga_n] \to [-1, +\infty)$ according to 
\[M_n^j= \{w\in \D{C} \mid 0 \leq \Re w \leq 1/\ga_n, \Im w\geq \mathfrak{b}_n^j(\Re w)\}.\]
By Equations \eqref{E:Y_n-comm-1}--\eqref{E:Y_n-comm-2}, each $\mathfrak{b}_n^j$ is continuous, and since 
$M_n^{j+1} \subset M_n^j$, $\mathfrak{b}_n^{j+1} \geq \mathfrak{b}_n^j$ on $[0, 1/\ga_n]$. 
Thus, for $n\geq -1$, we may define $\mathfrak{b}_n:[0, 1/\ga_n] \to [-1,+\infty]$ as
\[\mathfrak{b}_n(x)= \lim_{j\to + \infty} \mathfrak{b}_n^j(x)= \sup_{j\geq 0} \mathfrak{b}_n^j(x).\]
Note that $\mathfrak{b}_n$ may attain $+\infty$. 
Evidently,
\begin{equation}\label{E:M_n-b_n}
M_n= \{w\in \D{C} \mid 0 \leq \Re w \leq 1/\ga_n, \Im w \geq \mathfrak{b}_n(\Re w)\}.
\end{equation}
By \refC{C:model-almost-periodic}, $\mathfrak{b}_n^j(0)=\mathfrak{b}_n^j(1/\ga_n)$, 
and $\mathfrak{b}_n^j(x+1)= \mathfrak{b}_n^j(x)$ for all $x\in [0, 1/\ga_n-1]$. 
Thus, for all $n\geq -1$, 
\begin{equation}\label{E:b_n^j-cont-periodic}
\mathfrak{b}_n(0)=\mathfrak{b}_n(1/\ga_n), \quad \text{and} \quad \mathfrak{b}_n(x+1)=\mathfrak{b}_n(x), \, \forall x \in [0, 1/\ga_n-1].   
\end{equation}
Using $Y_n(0)=0$ for all $n\geq 0$, and the uniform contraction of the $Y_n$, for all $n\geq -1$, 
\begin{equation}\label{E:b_n(0)=0}
\mathfrak{b}_n(0)=0.
\end{equation}

The collection of the functions $\mathfrak{b}_n^j$ and $\mathfrak{b}_n$ enjoy equivariant relations induced 
by the maps $Y_n$. 
That is, the graph of $\mathfrak{b}_n^j$ is obtained from the graph of $\mathfrak{b}_{n+1}^{j-1}$ by applying $Y_{n+1}$ 
and its translations. Similarly, $\mathfrak{b}_n$ is related to $\mathfrak{b}_{n+1}$ through $Y_{n+1}$. 
Each $Y_n$ exhibits two distinct behaviour. 
Above the line $\Im w=1/\ga_n$ it nearly acts as the linear map multiplication by $\ga_n$. 
Below that line, it has a logarithmic behaviour. 

Now assume that $\ga$ is a Brjuno number, and for $n\geq -1$ and $j\geq 0$ inductively define the functions 
\[\mathfrak{p}_n^j: [0,1/\ga_n] \to [-1, +\infty).\] 
For all $n\geq -1$, set $\mathfrak{p}_n^0\equiv (\C{B}(\ga_{n+1})+5\pi)/(2\pi)$. 
Assuming $\mathfrak{p}_n^j$ is defined for some $j\geq 0$ and all $n\geq -1$, define $\mathfrak{p}_n^{j+1}$ on $[0, 1/\ga_n]$ 
so that 
the graph of $\mathfrak{p}_n^{j+1}$ is obtained from applying $Y_{n+1}$ and its integer translations to 
the graph of $\mathfrak{p}_{n+1}^j: [0, 1/\ga_{n+1}] \to [-1, +\infty)$.
As for $\mathfrak{b}_n^j$, each $\mathfrak{p}_n^j$ is continuous, 1-periodic and $\mathfrak{p}_n^j(0)=\mathfrak{p}_n^j(1/\ga_n)$.
By explicit calculations, one may see that $\mathfrak{p}_n^1 \leq  \mathfrak{p}_n^0$ for all $n\geq -1$, and then by an inductive argument, 
one may show that for all $n\geq -1$ and $j \geq 0$, $\mathfrak{p}_n^{j+1} \leq \mathfrak{p}_n^j$. 
Therefore, we may define 
\[\mathfrak{p}_n(x)= \lim_{j \to +\infty}  \mathfrak{p}_n^j(x), \quad \forall x\in [0, 1/\ga_n].\]
A main difference with the functions $\mathfrak{b}_n^j$ here is that the convergence in the above equation is uniform on 
$[0, 1/\ga_n]$. This is because the maps $Y_n$ behave better near the top end of $M_n^0$, where they are close 
to multiplication by $\ga_n$.
Then, each $\mathfrak{p}_n$ is continuous. 
Moreover, 
\begin{equation}\label{E:p_n-peiodic}
\mathfrak{p}_n(0)=\mathfrak{p}_n(1/\ga_n), \quad \mathfrak{p}_n(x)=\mathfrak{p}_{n}(x+1), \forall x\in [0, 1/\ga_n-1].
\end{equation}
By definition, $\mathfrak{p}_n^0 \geq \mathfrak{b}_n^0$, for all $n\geq -1$. Then, by the equivariant properties of $\mathfrak{b}_n^j$ and $\mathfrak{p}_n^j$ 
one concludes that for all $n\geq -1$ and $j\geq 0$,  $\mathfrak{p}_n^j \geq \mathfrak{b}_n^j$. 
In particular, 
\begin{equation}\label{E:p_n>=b_n}
\mathfrak{p}_n (x) \geq \mathfrak{b}_n(x), \quad \forall x\in [0, 1/\ga_n].
\end{equation}
Using $\mathfrak{b}_n(0)=0$ for all $n$, the 1-periodicity of the functions $\mathfrak{b}_n$, and their equivariant property, 
one concludes that for every $n\geq -1$, $\mathfrak{b}_n(x) < +\infty$ holds for a dense set of $x$ in $[0,1/\ga_n]$.
In the same fashion, if some $\mathfrak{b}_m$ attains $+\infty$ at a single point, then every $\mathfrak{b}_n$ attains $+\infty$ on a dense 
subset of $[0, 1/\ga_n]$.
Whether those happen or not depend on the arithmetic of $\ga$. 
Indeed, by explicit calculations, $\max_{x\in [0, 1/\ga_n]} \mathfrak{b}_n^1(x)$ is uniformly close to $\log 1/\ga_n$. 
Then, because of the functional relations in Equation \eqref{E:Brjuno-functional-equations} and 
\refP{P:coordinate-properties}-(vi), it follows that for all $n\geq -1$, 
\begin{equation}\label{E:b_n-max-brjuno}
\Big|2 \pi  \sup_{x\in [0, 1/\ga_n]} \mathfrak{b}_n(x) - \C{B}(\ga_{n+1})\Big | \leq  5 \pi.
\end{equation}
In the above relation, $\infty - \infty$ is assumed to be $0$. 
It follows that if $\ga$ is a Brjuno number, every $\mathfrak{b}_n$ is bounded, and if $\ga$ is not a Brjuno number, 
every $\mathfrak{b}_n$ attains $+\infty$ at a single point, and hence on a dense set of points. 

Also, by Equation~\eqref{E:b_n-max-brjuno}, when $\ga$ is a Brjuno number, 
$\mathfrak{b}_n$ is uniformly close to $\mathfrak{p}_n^0$ at some points.
Then, by the uniform contraction of the maps $Y_n$, and the equivariant properties of the functions $\mathfrak{b}_n$ 
and $\mathfrak{p}_n$, we must have $\mathfrak{p}_n(x)=\mathfrak{b}_n(x)$ on a dense set of points $x$ in $[0, 1/\ga_n]$.
Indeed, one may see that these equalities occur near the vertical line $\Re w=1/(2\ga_n)$, due 
to the extreme contracting factor of each map $Y_n$ near that line.  
Among the vertical lines in the domain of $Y_n$, the least amount of contraction occurs near the 
vertical line $\Re w=0$. 
So, $0$ is the least likely place to have $\mathfrak{b}_n=\mathfrak{p}_n$. 
The answer to this question depends on the arithmetic of $\ga$ as we briefly explain below.

Because of the uniform contraction of the maps $h_{\ga_n}$, the criterion for the Herman numbers is stable under 
uniform changes to the maps $h_{\ga_n}$. 
More precisely, if one replaces $h_{\ga_n}$ by uniformly nearby maps, say $Y_n^{-1}$, the corresponding set 
of rotation numbers stays the same. 
Indeed, one may employ the estimate in \refP{P:coordinate-properties}-(v) to show that 
for integers $m > n \geq 0$ and $y \in (1, +\infty)$, 
\[\big| 2\pi \Im Y_{n} \circ \dots \circ Y_m( i  y/(2\pi)) - h_{\ga_n}^{-1} \circ  \dots  \circ  h_{\ga_m}^{-1}(y)\big| 
\leq 10 \pi,\]
provided $h_{\ga_n}^{-1} \circ  \dots  \circ  h_{\ga_m}^{-1}(y)$ is defined. 
This estimate, and the uniform contraction of the maps $Y_n$, is used to show that 
$\ga$ belongs to $\E{H}$, if and only if, for all $x>0$ there is $m \geq 1$ such that 
\[\Im Y_0 \circ  \dots  \circ Y_{m-1}(i \C{B}(\ga_{m})/(2\pi)) \leq x.\]
Thus, $\ga$ is a Herman number if and only if $\mathfrak{p}_n(0)=0$ for all $n\geq -1$. 
Combining with earlier arguments, one concludes that when $\ga$ is a Herman number, $\mathfrak{b}_n\equiv \mathfrak{p}_n$, 
and when $\ga$ is a Brjuno but not a Herman number, $\mathfrak{b}_n< \mathfrak{p}_n$ holds on a dense subset of $[0, 1/\ga_n]$. 

As each $M_n$ is closed, for every $x \in [0, 1/\ga_n)$, $\liminf_{s\to x^+} \mathfrak{b}_n(s) \geq \mathfrak{b}_n(x)$, 
and for all $x \in (0, 1/\ga_n]$, $\liminf_{s\to x^-} \mathfrak{b}_n(s) \geq \mathfrak{b}_n(x)$. 
In fact, both of ``$\geq$'' are ``$=$''. 
That is because, for large values of $m$, $\mathfrak{b}_{n+m}$ is $1$-periodic and by the equivariant property of the maps 
$\mathfrak{b}_j$, and the uniform contraction of the maps $Y_j$, one may obtain a sequence of points on the graph of $\mathfrak{b}_n$ 
which converges to $(x, \mathfrak{b}_n(x))$. 
That requires a detailed combinatorial analysis of the trajectories of points for consecutive iterates of the maps $Y_j$. 
These relations imply the property (b) in the definition of the hairy Jordan curve and the Cantor bouquet. 
\end{proof}

\subsection{Dynamics on the topological model}\label{SS:dynamics-model}
In this section we present the (toy) map 
\begin{equation}
T_\ga: A_\ga \to A_\ga, 
\end{equation}
which serves as the model for $f: \Lambda(f) \to \Lambda(f)$. 
This is also a brief description of the detailed arguments presented in \cite{Che23}. 

Let us fix $\ga \in \D{R}\setminus \D{Q}$, and let $M_{-1}$ be the corresponding topological model defined 
in \refS{SS:straight-model}.
Given $w_{-1} \in M_{-1}$, we inductively identify $l_i \in \mathbb{Z}$, and then $w_{i+1} \in M_{i+1}$,  
such that 
\begin{equation}
- \gep_{i+1} \Re (w_i -l_i) \in [0,1) \quad \text{ and } \quad Y_{i+1}(w_{i+1})+l_{i}= w_i.
\end{equation}
Then, for all $n\geq 0$, 
\begin{equation}\label{E:trajectory-condition-1}
w_{-1}=(Y_0+l_{-1}) \circ (Y_1 + l_0)  \circ \dots \circ (Y_n+1_{n-1})(w_n),
\end{equation}
and by \eqref{E:M_n^j--1} and \eqref{E:M_n^j-+1}, for all $i \geq 0$, 
\begin{equation}\label{E:trajectory-real-parts}
0 \leq l_i \leq a_i + \gep_{i+1}, \q \tand \q 0 \leq \Re w_i < 1/\ga_i.
\end{equation}
We refer to the sequence $(w_i ; l_i)_{i \geq -1}$ as the \textbf{trajectory} of $w_{-1}$.

Consider the map 
\begin{equation}
\tilde{T}_\ga:M_{-1} \to M_{-1}, 
\end{equation}
defined as follows. 
For $w_{-1}$ in $M_{-1}$ with trajectory $(w_i; l_i)_{i\geq -1}$,  
\begin{itemize}
\item[(i)] if there is $n \geq 0$ such that $w_n \in K_n$, and for all $0 \leq i \leq n-1$, $w_i \in M_i \setminus K_i$, then 
\[\tilde{T}_\ga(w_{-1})= \left(Y_0+\frac{\gep_0+1}{2}\right ) \circ \left(Y_1+\frac{\gep_1+1}{2}\right ) 
\circ \cdots \circ \left (Y_n +\frac{\gep_{n}+1}{2} \right)(w_n+1);\] 
\item[(ii)] if for all $n \geq 0$, $w_n \in M_n \setminus K_n$, then 
\[\tilde{T}_\ga(w_{-1})= \lim_{n \to +\infty} \left(Y_0+\frac{\gep_0+1}{2}\right ) \circ 
\left(Y_1+\frac{\gep_1+1}{2}\right ) 
\circ \cdots \circ \left (Y_n +\frac{\gep_{n}+1}{2} \right) (w_n+1-1/\ga_n).\]
\end{itemize}
Evidently, item (i) leads to continuous maps on pieces of $M_{-1}$. 
There might be a vertical half-infinite line where item (ii) applies. 
On that set, the uniform contractions of the maps $Y_j$ imply that the sequence of maps in item (ii) 
converges to a well-defined map. 
Moreover, it follows from \eqref{E:Y_n-comm-1} and \eqref{E:Y_n-comm-2} that these piece-wise defined maps 
match together and produce a well-define homeomorphism  
\[\tilde{T}_{\ga}: M_{-1}/\D{Z} \to M_{-1}/ \D{Z}.\]
One may compare the above definition of $\tilde{T}_\ga$ to the action of $f$ on its 
renormalisation tower in \refS{SS:iterates-shifts-lifts}. 
By the definition of $A_\ga$ in \refE{E:A_ga}, $\tilde{T}_\ga$ induces a homeomorphism 
\[T_\ga:\hat{A}_\ga \to \hat{A}_\ga,\]
which may be restricted to the homeomorphism 
\[T_\ga:A_\ga \to A_\ga.\]

Recall that $T_\ga: A_\ga \to A_\ga$ is called \textbf{topologically recurrent}, 
if for every $x \in A_\ga$ there is a strictly increasing sequence of positive integers $(m_i)_{i\geq 0}$ 
such that $T_\ga \co{m_i}(x) \to x$ as $i \to +\infty$. 
A set $K \subset A_\ga$ is called \textbf{invariant} under $T_\ga$, if $T_\ga (K) =K= T_\ga^{-1}(K)$. 
We use the notation $\omega(z)$ to denote the set of accumulation points of the orbit of a given point $z \in A_\ga$. 
That is, the set of limit points of all convergent subsequences of the orbit of $z$. 

Define $r_\ga \in [0,1]$ according to 
\[[ r_\ga, 1] = \{z\in A_\ga \mid \Im z =0, \Re z \geq 0\}.\]
By \refT{T:trichotomy-model}, when $\ga$ is a Herman number we have $r_\ga=1$, when $\ga$ is a Brjuno but not a 
Herman number we have $r_\ga \in (0, 1)$, and when $\ga$ is not a Brjuno number we have $r_\ga=0$. 

Below, we summarise the dynamical behaviour of $T_\ga$ on $A_\ga$ which are established in \cite{Che23}. 

\begin{thm}[\cite{Che23}]\label{T:model-dynamics}
For every $\ga \in \mathbb{R} \setminus \mathbb{Q}$, $T_\alpha: A_\alpha \to A_\alpha$ satisfies 
the following properties.
\begin{itemize}
\item[(i)] $T_\alpha: A_\alpha \to A_\alpha$ is a topologically recurrent homeomorphism.
\item[(ii)] The map 
\[\omega: [r_\ga, 1] \to \{X \subseteq A_\ga \mid X \text{ is non-empty, closed and invariant}\}\]
is a homeomorphism with respect to the Hausdorff metric on the range. 
In particular, every non-empty closed invariant subset of $A_\alpha$ is equal to $\omega(z)$, for some $z \in A_\ga$. 
\item[(iii)] The map $\omega$ on $[r_\ga, 1]$ is strictly increasing with respect to the linear order on $[r_\ga, 1]$ 
and the inclusion on the range. 
\item[(iv)] If $\ga$ is not a Brjuno number, $\omega(t)$ is a Cantor bouquet for every $t \in (r_\ga, 1]$, 
and $\omega(1)= A_\ga$. 
\item[(v)] If $\ga$ is a Brjuno but not a Herman number, $\omega(t)$ is a hairy Jordan curve for every $t \in (r_\ga, 1]$, 
and $\omega(r_\ga)$ is a Jordan curve. 
\end{itemize}
\end{thm} 
\section{Near-parabolic renormalisation scheme}\label{S:NP-renormalisation-scheme} 
In this section we present the near-parabolic renormalisation scheme introduced by Inou and Shishikura \cite{IS06}.
This consists of a class of maps discussed in \refS{SS:IS-class}, and a renormalisation operator acting on that class 
discussed in \refS{SS:renormalisation-def}.  
Our presentation of the renormalisation operator is slightly different from the one by Inou and Shishikura, 
but produces the same map. 

\subsection{Inou-Shishikura class of maps}\label{SS:IS-class}
Let $\rs$ denote the Riemann sphere. Consider the filled-in ellipse
\[E= \Big\{x+ i y\in \D{C} \; \Big | \; \Big(\frac{x+0.18}{1.24}\Big)^2+\Big(\frac{y}{1.04}\Big)^2\leq 1 \Big\},\]
and the domain
\begin{equation}\label{E:U}
U= g(\rs \setminus E), \text{ where } g(z)=-4z/(1+z)^2.
\end{equation}
The domain $U$ is simply connected and contains $0$. 

The restriction of the polynomial $P(z)=z(1+z)^2$ on $U$ has a specific covering structure which plays a 
central role in the near-parabolic renormalisation scheme. 
The polynomial $P$ has a parabolic fixed point at $0$ with multiplier $P'(0)=1$. 
It has a simple critical point at $\cp_P=-1/3 \in U$ and a critical point of order two at $-1 \in \D{C}\setminus \ol{U}$. 
The critical point $-1/3$ is mapped to $\cv_P=-4/27 \in U$, and $-1$ is mapped to $0$. See \refF{F:poly}. 

Let $\IS_0$ denote the class of all maps of the form 
\[h=P\circ \vfi^{-1}\!\!:U_h \rightarrow \D{C}\]
where 
\begin{itemize}
\item[(i)]$\vfi\colon U \ra U_h$ is holomorphic, one-to-one, onto; and 
\item[(ii)]$\vfi(0)=0$ and $\vfi'(0)=1$.
\end{itemize}
By (i), every map in $\IS_0$ has the same covering structure on its domain as the one of $P$ on $U$. 
By (ii), every map in $\IS_0$ has a fixed point at $0$ with multiplier $+1$, and a unique critical point 
at $\cp_h=\vfi(-1/3)\in U_h$ which is mapped to $\cv_h=-4/27$.

For $\ga\in \D{R}$, let $R_\ga(z)= \ea z$, and define 
\[\IS_\ga=\{ h \circ R_\ga \mid h\in \IS_0\}.\]
We continue to use the notation $U_f$ to denote the domain of definition of $f \in \IS_\ga$. 
That is, if $f=h \circ R_\ga$ with $h \in \IS_0$, then $U_f= R_{\ga}^{-1}(U_h)$. 

We equip $\bigcup_{\ga \in \D{R}} \IS_\ga$ with the topology of uniform convergence on compact sets. 
That is, given $h : U_h \to \D{C}$, a compact set $K\subset U_h$ and an $\eps>0$, 
a neighbourhood of $h$ (in the compact-open topology) is defined as the set of maps 
$g\in \cup_{\ga\in \D{R}} \IS_\ga$ such that $K \subset U_g$ and for all $z\in K$ we have $|g(z)-h(z)|<\eps$.
There is a one-to-one correspondence between $\IS_0$ and the space of normalised univalent maps on the unit disk. 
By the Koebe distortion theorem \cite[Thm 2.5]{Dur83}, for any closed set $A \subset \D{R}$,
$\bigcup_{\ga \in A} \IS_\ga$ is compact in this topology.

We normalise the family of quadratic polynomials by placing a fixed point at $0$ and the finite critical value at $-4/27$;  
\[ Q_\ga(z)=\ea z+\frac{27}{16} e^{4\pi \ga i}z^2.\]
Then, $Q_\ga'(0)= \ea$, $Q_\ga '(-8 e^{-2\pi \ga i} /27)=0$, and $Q_\ga(-8 e^{-2\pi \ga i} /27)= -4/27$. 
We set the notation 
\[\QIS_\ga= \IS_\ga \cup \{Q_\ga\}.\]
When $h=Q_\ga$, we set $U_h= \D{C}$.
We referred to the class of maps $\cup_{\ga \in \irr} \QIS_\ga$ as the class $\C{F}$ in the introduction. 
In \refP{P:sequence-renormalisations} we determine the value of $N$ for \refT{T:trichotomy-main-thm}. 

Let $h= h_0 \circ R_\ga \in \IS_{\ga}$ with $h_0 \in \IS_0$ and $\ga\in \mathbb{R}$. 
The map $h_0$ has a double fixed point at $0$. 
For $\ga$ small enough and non-zero, $h$ is a small perturbation of $h_0$, and hence, it has a non-zero 
fixed point near $0$ which has split from $0$ at $\ga=0$. 
We denote this fixed point by $\gs_h$. 
It follows that $\sigma_h$ depends continuously on $h_0$ and $\ga$, with asymptotic expansion 
$\sigma_h=-4\pi \ga i/h_0''(0)+o(\ga)$, as $\ga$ tends to $0$. 
Evidently, $\sigma_h \ra 0$ as $\ga \ra 0$.  

Given a set $X$ in a topological space, $\ol{X}$ denotes the closure of $X$, $\interior(X)$ its interior, and 
$\partial X$ its boundary. 

\begin{propo}[\cite{IS06}]\label{P:Ino-Shi1} 
There is $r_1 >0$ such that for every $h\colon U_h \ra \D{C}$ in $\bigcup_{\ga \in (0,r_1]} \QIS_\ga$,
there exist a simply connected domain $\C{P}_h \subset U_h$ and a univalent map 
\[\Phi_h\colon \C{P}_h \ra \D{C}\] 
satisfying the following properties:
\begin{itemize}  
\item[(a)] $\C{P}_h$ is bounded by piecewise smooth curves and $\ol{\C{P}_h} \subset U_h$;
\item[(b)] $\cp_h$, $0$, and $\sigma_h$ belong to $\partial \C{P}_h$, while $\cv_h$ belongs to $\interior(\C{P}_h)$;
\item[(c)] $\gF_h(\C{P}_h)$ contains the set $\{w\in \mathbb{C}\mid \Re w \in (0,2]\}$; 
\item[(d)] $\Phi_h(\cv_h)=1$, $\Im \Phi_h(z) \ra +\infty$ as $z \to 0$ in $\C{P}_h$, and $\Im \Phi_h(z)\ra-\infty$ 
as $z \to \sigma_h$ in $\C{P}_h$; 
\item[(e)] If $z$ and $h(z)$ belong to $\C{P}_h$, then 
\[\Phi_h(h(z))=\Phi_h(z)+1;\] 
\item[(f)] the induced map $\gF_h: \C{P}_h/{\sim} \to \D{C}/\D{Z}$, where $z \sim h(z)$, is a biholomorphism;  
\item[(g)] $\Phi_h$ is unique, and depends continuously on $h$.
\end{itemize}
\end{propo}

The class $\IS_0$ is denoted by $\mathcal{F}_1$ in \cite{IS06}. 
One may refer to Theorem 2.1 as well as Main Theorems 1 and 3 in \cite{IS06}, for further details on the 
above proposition. 

In the next proposition we state some crucial properties of $\C{P}_h$ and $\gF_h$.

\begin{figure}[ht]
\begin{center}
\begin{pspicture}(.5,.3)(8.5,5.2)
\rput(4.5,2.75){\includegraphics[width=8cm]{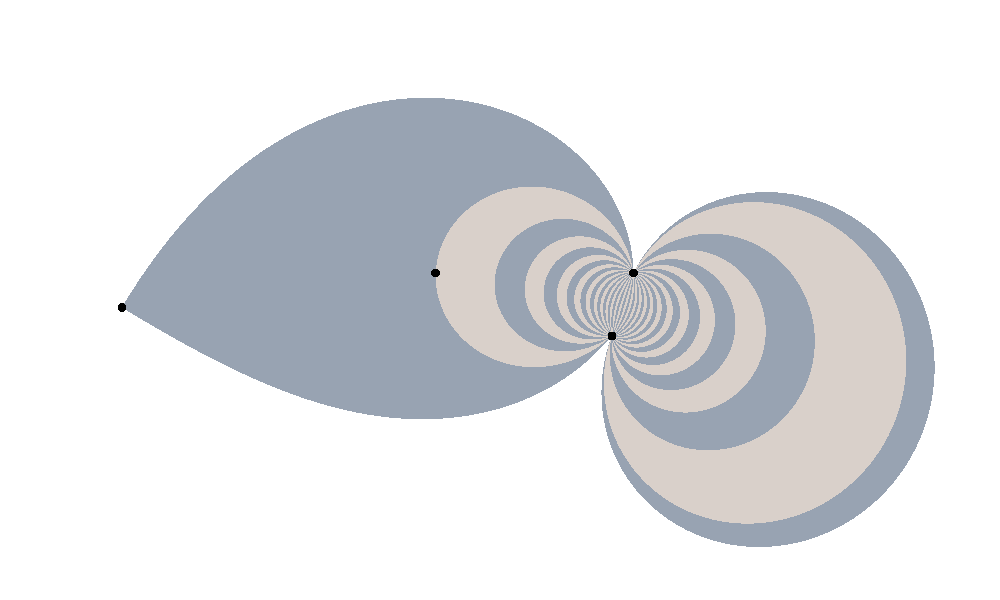}}
\rput(2.0,2.65){$\cp_h$}
\rput(3.65,2.95){$\cv_h$}
\rput(5.65,3.4){$0$}
\psline(5.2,2.05)(5.36,2.4)
\rput(5.1,1.9){$\sigma_h$}
\end{pspicture}
\end{center}
\caption{The domain $\C{P}_h$ and the special points associated to some $h\in\IS_\ga$. 
The alternating coloured croissants are the pre-images of vertical strips of width one by $\Phi_h$.}
\label{F:petal}
\end{figure}

\begin{propo}\label{P:petal-geometry}
There exist $r_2 \in (0, r_1]$, as well as integers $c_1\leq 1/r_2-3/2$ and $c_2$ such that for every 
$h\colon U_h \ra \D{C}$ in $\bigcup_{\ga \in (0,r_2]} \QIS_\ga$, the domain $\C{P}_h \subset U_h$ in 
\refP{P:Ino-Shi1} may be chosen to satisfy the additional properties:
\begin{itemize}  
\item[(a)] there exists a continuous branch of argument defined on $\C{P}_h$ such that 
\[\max_{w,w'\in \C{P}_h} |\arg(w)-\arg(w')|\leq 2 \pi c_2,\]
\item[(b)] $\Phi_h(\C{P}_h)=\{w \in \D{C} \mid 0 < \Re(w) < \ga^{-1} -c_1\}$. 
\end{itemize}
\end{propo}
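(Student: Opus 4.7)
The plan is to refine the petal $\C{P}_h$ and Fatou coordinate $\Phi_h$ furnished by Proposition~\ref{P:Ino-Shi1} so that (i) the image $\Phi_h(\C{P}_h)$ is precisely the vertical strip $\{0<\Re w < \ga^{-1}-c_1\}$, and (ii) the petal has bounded winding around $0$. The underlying mechanism is that as $\ga\to 0^+$ the maps in $\QIS_\ga$ converge to parabolic maps in $\IS$, and near-parabolic Fatou coordinates extend to a strip whose width is asymptotic to $1/\ga$.

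For part (b), I would proceed by analyzing the Fatou coordinate between the two fixed points $0$ and $\gs_h$. Since $h'(\gs_h)=e^{-2\pi\ga\Ni}+o(\ga)$, the return dynamics near $\gs_h$ also gives a repelling Fatou coordinate, and comparing it with $\Phi_h$ via the Abel equation shows that the total change of $\Re\Phi_h$ across a maximal Jordan domain joining $0$ to $\gs_h$ is exactly $\ga^{-1}+O(1)$ as $\ga\to 0^+$. The existence of a uniform constant $c_1$ such that $\Phi_h$ extends univalently onto the strip $\{0<\Re w <\ga^{-1}-c_1\}$ then follows from the fact that the branch points and boundary arcs of the maximal Fatou coordinate stay in a $\Phi_h$-strip of uniformly bounded width around $\Re w = \ga^{-1}$, by compactness of the class $\IS$. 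One then redefines $\C{P}_h := \Phi_h^{-1}(\{0<\Re w<\ga^{-1}-c_1\})$; the remaining requirements of Proposition~\ref{P:Ino-Shi1}(a)--(e), including that $\cp_h,0,\gs_h\in \partial \C{P}_h$ and $\cv_h\in \operatorname{int}\C{P}_h$, are checked by inspecting where the normalizations $\Phi_h(\cp_h)=0$ and $\Phi_h(\cv_h)=1$ sit inside this strip.

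For part (a), I would use normality: the class $\IS$ is pre-compact in the compact-open topology on normalized univalent maps $\vfi\colon U\to U_h$, so the family of triples $(h,\C{P}_h,\Phi_h)$ with $h\in\QIS_\ga$, $\ga\in(0,r_2]$, is relatively compact as $\ga\to 0^+$. Any limit is the standard Leau--Fatou attracting petal of a parabolic map $h_0\in \IS$, which sits inside $U_{h_0}$ and therefore has some finite winding number around $0$, bounded by a constant depending only on the fixed geometric data of $U$ in \eqref{E:U}. The continuity statement in Proposition~\ref{P:Ino-Shi1}(e), together with the fact that the refined $\C{P}_h$ constructed in step (i) is obtained by a continuous cut of $\Phi_h$, lets one transfer this argument bound to all $h\in\QIS_\ga$ for $\ga\leq r_2$, at the price of enlarging the bound by a uniform integer amount, which fixes the value of $c_2$.

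The main obstacle is quantitative calibration of $c_1$ in step (i). Making the error $O(1)$ in the width $\ga^{-1}-c_1$ precise requires comparing the Fatou coordinate of $h$ with that of its parabolic limit across the ``Ecalle cylinder'' region where both fixed points influence the dynamics; the needed estimate is exactly the content of the Inou--Shishikura near-parabolic machinery, so I would invoke \cite[Prop.~1.4]{Ch10-I} or \cite[Prop.~12]{BC12} rather than re-deriving it, and concentrate on verifying that the selected petal still satisfies the listed boundary and covering properties.
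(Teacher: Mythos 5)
The paper does not give an independent proof of this proposition: it simply refers the reader to \cite[Prop.~1.4]{Ch10-I} and \cite[Prop.~12]{BC12}, which is precisely what you do in your final paragraph, so your approach matches the paper's. Your preceding sketch is a reasonable heuristic outline of what those references establish (comparison of the attracting Fatou coordinate with the one at $\gs_h$, plus pre-compactness of $\IS$ to control the winding), but it is not a substitute for the cited estimates, and you correctly flag that.
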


See \cite[Prop.~2.4]{Che13}  or \cite[Prop.~12]{BC12} for proofs. 
The map $\Phi_h: \mathcal{P}_h\to \D{C}$ is called the perturbed Fatou coordinate or simply 
the \textbf{Fatou coordinate} of $h$. See \refF{F:petal}. 

\subsection{Near-parabolic renormalisation operator}\label{SS:renormalisation-def}
For $h\colon U_h \ra \D{C}$ in $\bigcup_{\ga \in (0,r_2]} \QIS_\ga$, with Fatou coordinate
$\Phi_h\colon \C{P}_h \ra \D{C}$, let 
\begin{equation}\label{E:sector-def}
\begin{gathered}
A_h=\{z\in \C{P}_h : 1/2 \leq \Re(\Phi_h(z)) \leq 3/2 \: ,\: -2 \leq \Im \Phi_h(z) \leq 2 \}, \\
B_h=\{z\in \C{P}_h : 1/2 \leq \Re(\Phi_h(z)) \leq 3/2 \: , \: 2\leq \Im \Phi_h(z) \}.
\end{gathered}
\end{equation}
By \refP{P:Ino-Shi1}, $\cv_h \in \interior(A_h)$ and $0 \in \partial B_h$. 
See \refF{F:sectorpix}.

It follows from \cite{IS06} (see \refR{R:alternative-sets} below) that there is a positive integer $k_h$, 
depending on $h$, such that the following four properties hold:
\begin{itemize}
\item[(i)] For every integer $k$, with $1 \leq k \leq k_h$, there exists a unique connected component of 
$h^{-k}(B_h)$ which is compactly contained in $U_h$ and contains $0$ on its boundary. We denote 
this component by $B_h^{-k}$. 
\item[(ii)] For every integer $k$, with $1\leq  k \leq k_h$, there exists a unique connected component of 
$h^{-k}(A_h)$ which has non-empty intersection with $B_h^{-k}$, and is compactly contained in $U_h$. 
This component is denoted by $A_h^{-k}$. 
\item[(iii)] The sets $A_h^{-k_h}$ and $B_h^{-k_h}$ are contained in 
\[\big \{z\in\C{P}_h \mid 1/2< \Re \Phi_h(z) < 1/\ga - c_1\big\}.\]
\item[(iv)] The maps $h:A_h^{-k}\to A_h^{-k+1}$, for $2\leq k \leq k_h$, and $h:B_h^{-k}\to B_h^{-k+1}$, 
for $1\leq k \leq k_h$, are one-to-one. 
The map $h: A_h^{-1} \to A_h$ is a degree two proper branched covering.
\end{itemize}
Assume that $k_h$ is the smallest positive integer for which the above properties hold. Then define
\[S_h=A_h^{-k_h}\cup B_h^{-k_h}.\]

\begin{propo}\label{P:k_n-bounded}
There is a constant $k \in \D{Z}$ such that for all $h \in \bigcup_{\ga \in (0,r_2]} \QIS_\ga$, $k_h \leq k$. 
\end{propo}

See \cite{Che13} or \cite{Che19} for the proof of the above proposition.

\begin{figure}[ht]
\begin{center}
 \begin{pspicture}(-.5,1.2)(11.4,9)
\epsfxsize=6.3cm
\rput(3.5,5.9){\epsfbox{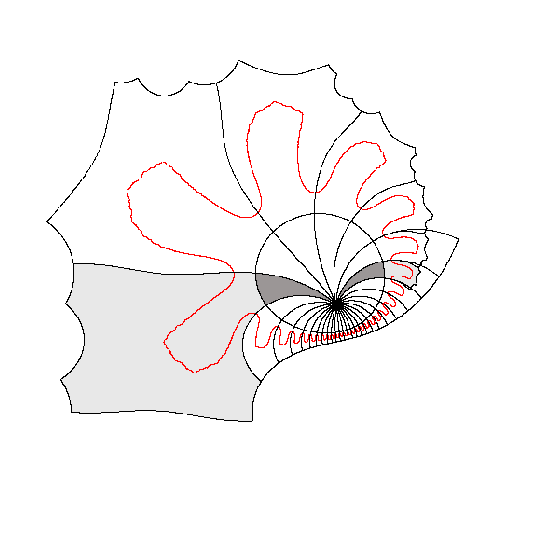}}  
  \psset{xunit=1cm}
  \psset{yunit=1cm}
    \pscurve[linewidth=.6pt,linestyle=dashed,linecolor=black]{->}(5.3,5.1)(5.3,5.5)(5.2,5.8)
    \pscurve[linewidth=.6pt,linestyle=dashed,linecolor=black]{->}(2.2,3.7)(2.3,4.1)(2.2,4.5)
    \pscurve[linewidth=.6pt,linestyle=dashed,linecolor=black]{->}(.7,6.4)(2,6.4)(3.2,6.2)(3.6,5.6)
    \rput(5.3,4.7){$S_h$}
    \rput(2.2,3.4){$A_h^{-1}$}
    \rput(.1,6.4){$B_h^{-1}$}
    \psdots[dotsize=2pt](2.3,5.04)(3.4,4.97)
    \rput(2.,5){\small{$\cp_h$}}
    \rput(3.33,4.77){\small{$\cv_h$}}
\newgray{Lgray}{.99}
\newgray{LLgray}{.88}
\newgray{LLLgray}{.70}
\psdots(7.6,5.5)(8,5.5)(11,5.5)
\pspolygon[fillstyle=solid,fillcolor=LLLgray](7.8,7.3)(7.8,6.1)(8.2,6.1)(8.2,7.3)
\pspolygon[fillstyle=solid,fillcolor=LLgray](7.8,6.1)(7.8,4.9)(8.2,4.9)(8.2,6.1)

\pspolygon[fillstyle=solid,fillcolor=LLLgray](10.2,7.3)(10.05,6.2)(10.45,6.2)(10.6,7.3)
\pspolygon[fillstyle=solid,fillcolor=LLgray](10.05,6.2)(9.8,5.1)(9.9,5)(10,4.97)(10.1,5)(10.2,5.1)(10.45,6.2)
\psdots(7.6,5.5)(8,5.5)
\psline{->}(7.6,5.5)(11.3,5.5)
\psline{->}(7.6,4.7)(7.6,7.3)
\rput(8,5.3){\tiny{$1$}}
\rput(10.9,5.3){\tiny{$\frac{1}{\ga}-c_1$}}
\rput(7.3,4.9){\tiny{$-2$}}

\psline{->}(6,5.9)(7.5,5.9)
\rput(6.8,6.1){$\Phi$}

\pscurve[linestyle=dashed]{<-}(7.9,6.7)(8.9,6.9)(10.3,6.7)
\rput(9.1,7.5){\tiny{$\gF_h \circ h\co{k_h}\circ \gF_h^{-1}$}}

\psline{->}(7.4,4.7)(6.6,3.2)
\rput(7.7,4.){$e^{2\pi i w}$}
\psellipse(6,2.1)(1.2,.9)
\psdots[dotsize=2pt](6,2.1)
\rput(4,2.7){$\rr (h)$}
\pscurve[linewidth=.5pt]{->}(5.2,3)(4.4,3.2)(4.8,2.6)
\rput(6.2,2.1){\small{$0$}}
\rput(3.4,9){$h$}
\pscurve[linewidth=.5pt]{->}(3.5,8.5)(3,9)(2.5,8.4)
 \end{pspicture}
\caption{Illustration of the sets $A_h$, $B_h$,..., $A_h^{-k_h}$, $B_h^{-k_h}$, and the sector $S_h$. 
The amoeba shaped red curve denotes a large number of iterates of $\cp_h$ under $h$.}
\label{F:sectorpix}
\end{center}
\end{figure}

Since $h\co{k_h}: S_h \to A_h \cup B_h$, the composition 
\begin{equation}\label{E:renorm-def}
E_h=\Phi_h \circ h\co{k_h} \circ \Phi_h^{-1}:\Phi_h(S_h) \ra \D{C}
\end{equation}
is a well-defined map. 
Also, consider the covering map 
\begin{equation}\label{E:exp}
\ex(w)=(-4/27) e^{2\pi i w}. 
\end{equation} 
By \refP{P:Ino-Shi1}-(e), $E_h(w+1)=E_h(w)+1$, when both $w$ and $w+1$ belong to the closure of 
$\gF_h(S_h)$. Thus, $E_h$ induces, via $\ex$, a unique map $\rr (h)$ defined on 
a set containing a punctured neighbourhood of $0$. 
It follows from \refP{P:Ino-Shi1}-(d) that $\rr(h)(z) \to 0$ as $z\to 0$. Therefore, $0$ is a removable singularity of 
$\rr(h)$. 
Basic calculations show that near $0$, 
\[\rr(h)(z)= e^{-2 \pi i/\ga} z+ O(z^2).\] 
The map $\rr (h)$, restricted to the interior of $\ex(\Phi_h(S_h))$, is called the 
\textbf{near-parabolic renormalisation} of $h$. 
We may simply refer to this operator as \textbf{renormalisation}. 

Because $\gF_h(\cv_h)=+1$ and $\ex(+1)= -4/27$, the critical value of $\rr(h)$ is placed at $-4/27$. 
See \refF{F:sectorpix}. 
It is also worth noting that the action of the renormalisation on the asymptotic rotation number at $0$ is 
\[\ga \mapsto -1/\ga \mod \D{Z}.\]

\begin{rem}\label{R:alternative-sets}
Inou and Shishikura give a somewhat different definition of this renormalisation operator using slightly 
different regions $A_h$ and $B_h$ compared to the ones here. 
However, the two processes produce the same map $\rr(h)$ modulo their domains of definition.
More precisely, there is a natural extension of $\gF_h$ onto the sets $A_h^{-k} \cup B_h^{-k}$, for 
$0\leq k\leq k_h$, such that each set $\Phi_h(A_h^{-k}\cup B_h^{-k})$ is contained in the union 
\[D^\sharp_{-k} \cup D_{-k} \cup D''_{-k} \cup D'_{-k+1} \cup D_{-k+1} \cup D^\sharp_{-k+1}\] 
in the notations used in \cite[Section 5.A]{IS06}.
\end{rem}

Consider the domain  
\begin{equation}\label{E:V}
V=P^{-1}\big (B(0,4e^{4\pi}/27) \big ) \setminus  \big ( (-\infty,-1]\cup B \big)
\end{equation}
where $B$ is the component of $P^{-1}(B(0, 4e^{-4\pi}/27))$ containing $-1$. 
By an explicit calculation (see \cite[Prop.~5.2]{IS06}) one can see that $\ol{U} \subset \interior(V)$.   
See \refF{F:poly}. 

\begin{figure}[ht]
\begin{center}
  \begin{pspicture}(8,3.2)
  \rput(4.5,1.6){\epsfbox{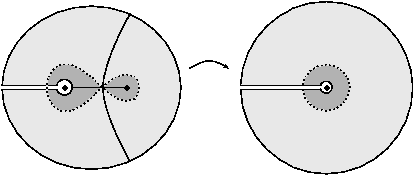}}
      \rput(6.12,1.6){{\small $\times$}}
      \rput(5.9,1.4){{\small $cv_P$}}
      \rput(6.7,1.8){{\small $0$}} 
      \rput(2.7,1.6){{\small $\times$}}
      \rput(2.7,1.25){{\small $cp_P$}}
      \rput(3.2,1.8){{\small $0$}}
      \rput(1.85,1.3){{\small $-1$}}
      \rput(4.5,2.2){{\small $P$}}
      \rput(1.1,2.7){{\small $V$}} 
\end{pspicture}
\caption{Covering structure of the polynomial $P$; similar colors and line styles are mapped on one another.}
\label{F:poly}
\end{center}
\end{figure}

\begin{thm}[\cite{IS06}]\label{T:Ino-Shi2} 
There exist $r_3 \in (0, r_2]$ such that for every $h \in \bigcup_{\ga \in (0, r_3]} \QIS_\ga$, $\rr(h)$ is 
defined and belongs to the class $\IS_{-1/\ga}$. 
That is, there exists a one-to-one holomorphic map $\psi:U\to \D{C}$ with $\psi(0)=0$ and $\psi'(0)=1$ so that 
\[\rr(h)(z)=P \circ \psi^{-1}(e^{-2\pi i/\ga} z), \; 
\forall z\in \psi(U)\cdot e^{2\pi i/ \ga}.\] 
Furthermore,  $\psi:U\to \D{C}$ extends to a univalent map on $V$.    
\end{thm}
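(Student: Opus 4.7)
The plan is to prove the theorem in two stages: first establish that $\rr(h)$, as constructed in Section~\ref{SS:renormalization-def}, factors as $P\circ\psi^{-1}\circ R_{-1/\ga}$ on $\psi(U)\cdot e^{-2\pi\Ni/\ga}$ for some univalent $\psi\colon U \to \D{C}$ with $\psi(0)=0$, $\psi'(0)=1$; then upgrade $\psi$ to a univalent extension on $V$. The core idea is that the near-parabolic renormalization inherits a rigid covering structure from $h$, and this structure is precisely the one carried by $P$ on $U$.

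First I would analyze the covering structure of $E_h=\Phi_h\circ h\co{k_h}\circ\Phi_h^{-1}$ on $\Phi_h(S_h)$. By properties (i)--(iv) in Section~\ref{SS:renormalization-def}, the composition $h\co{k_h}\colon \C{C}_h^{-k_h}\to\C{C}_h$ is a degree-two branched cover (because the critical point $\cp_h\in\C{C}_h^{-1}$ enters exactly once), while $h\co{k_h}\colon(\Csh_h)^{-k_h}\to\Csh_h$ is univalent. The Abel equation $\Phi_h\circ h=\Phi_h+1$ makes $E_h$ periodic of period $1$, so it descends via $\ex(w)=-\tfrac{4}{27}e^{2\pi\Ni w}$ to an analytic map $\rr(h)$ on a punctured neighborhood of $0$; removability of the singularity and the multiplier $e^{-2\pi\Ni/\ga}$ at $0$ follow from a direct computation using $E_h(w)=w-1/\ga+o(1)$ near $+\Ni\infty$. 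The critical value of $\rr(h)$ is $\ex(1)=-4/27=\cv_P$. Thus $\rr(h)$ has exactly the same combinatorial and covering data as $P\colon U\to P(U)$: a parabolic/rotating fixed point at $0$, a single simple critical point mapping to $-4/27$, and the topological structure of $g\colon\rs\setminus E\to \D{C}$.

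Next I would invoke uniformization. After conjugating by $z\mapsto e^{2\pi\Ni/\ga}z$ to eliminate the rotation, I obtain a map $h_*$ whose source domain is a topological disk and which is a degree-two branched cover with critical data matching that of $P\restriction U$. Since two degree-two branched covers over $P(U)$ with the same critical portrait are conformally equivalent (unique up to a Möbius normalization of the source), there is a unique univalent $\psi\colon U\to\Dom(h_*)$ with $\psi(0)=0$ and $\psi'(0)=1$ satisfying $h_*=P\circ\psi^{-1}$. This gives the first assertion with $r_3\le r_2$ small enough that all the constructions in Propositions~\ref{P:Ino-Shi1} and~\ref{P:petal-geometry} apply.

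The hard part, and the main obstacle, is extending $\psi$ from $U$ to $V$. The enlarged domain $V$ is the pullback under $P$ of a disk of radius $\tfrac{4}{27}e^{4\pi}$, with a standard component removed; extending $\psi$ to $V$ therefore amounts to extending the renormalized dynamics to the $\ex$-preimage of a larger round disk (of width roughly $4\pi$ in $\Phi_h$-coordinates on each side of the relevant strip). Concretely, I would pull the sectors $\C{C}_h$ and $\Csh_h$ back further along univalent inverse branches of $h$, showing that for $\ga\le r_3$ sufficiently small these pullbacks stay compactly inside $U_h$ and avoid $\cp_h$ except for the single prescribed branching. The required estimates combine: (a) precompactness of $\bigcup_{\ga}\QIS_\ga$ in the compact-open topology, (b) Koebe distortion control on univalent branches of $h^{-k}$ along the portion of $\Phi_h(\C{P}_h)$ of width up to $1/\ga-c_1$, and (c) a careful choice of the widened vertical strip in Fatou coordinates so that the geometry of the pulled-back sector, after projection by $\ex$, covers exactly $\ex(V)$. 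Once this enlarged domain of $\rr(h)$ is in hand, the same uniformization argument produces the desired extension of $\psi$ onto $V$, and univalence is preserved because distinct sheets of the extended cover remain disjoint by the covering structure of $P$ on $V$. The delicate point throughout is making the smallness threshold $r_3$ uniform across $\QIS_\ga$, which is where the Inou--Shishikura a priori bounds are essential.
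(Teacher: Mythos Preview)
The paper does not prove this theorem at all: it is quoted from Inou and Shishikura \cite{IS06} (specifically their Main Theorem~3), and the surrounding text makes this explicit (``A remarkable result by Inou and Shishikura \cite[Main thm~3]{IS06} guarantees that\ldots''). So there is no proof in the paper for your attempt to be compared against.

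That said, your outline captures the correct architecture of the Inou--Shishikura argument: one analyzes the covering structure of the return map $E_h$, projects by $\ex$, recognizes the model $P\restriction U$, and then works to enlarge the domain to obtain the extension of $\psi$ to $V$. Where your sketch falls short of an actual proof is precisely in the step you flag as ``the hard part'': the uniform (in $h$) control needed to push the inverse branches far enough to cover $V$ is the substance of \cite{IS06}, and it is obtained there not by a soft precompactness-plus-Koebe argument but through an explicit quasiconformal/Teichm\"uller approach together with detailed numerical estimates on $P$ and on the Fatou coordinates. In particular, your appeal to ``two degree-two branched covers over $P(U)$ with the same critical portrait are conformally equivalent'' is too quick: one must match not only the critical data but the full image structure, and one must show the renormalized domain is large enough that the uniformizing $\psi$ extends univalently to all of $V$, which is exactly what requires the a priori bounds. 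For the purposes of this paper the correct move is simply to cite \cite{IS06}.
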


\refT{T:Ino-Shi2} is a refinement of earlier constructions by Lavaurs~\cite{La89} and Shishikura~\cite{Sh98}. 
See also \cite{Ya08} for an alternative point of view. 
Two applications of a similar renormalisation led to the remarkable result of Shishikura \cite{Sh98} 
on the boundary of the Mandelbrot set. Even one application of the operator in a specific setting may be 
fruitful, as in \cite{ACE22}. 
The results stated in the Introduction, and the technical statements proved in this paper, 
apply to all the maps in $\QIS_\ga$, provided $\ga$ is of high type. 
Slightly modified renormalisation schemes are introduced for uni-singular maps in \cite{C14}, 
and for cubic maps in \cite{Yan15}. 
It is likely that suitable modifications of the arguments presented here may be applied in those settings as well.
\section{Comparing the changes of coordinates}
\label{S:coordinates-vs-model} 
Given $\ga \in (0,r_3]$ and $h\in \QIS_\ga$, the change of coordinate $\ex \circ \gF_h$ relates the iterates of $h$ 
to the iterates of $\rr(h)$. 
When studying repeated renormalisations, one needs to study long compositions of such changes of coordinates. 
As we shall do in later sections, it is convenient to consider suitable inverse branch of $\ex^{-1} \circ \gF_h^{-1}$, 
and study their compositions. 
In this section we show that $\ex^{-1} \circ \gF_h^{-1}$ behaves like the (model) map $Y_\ga$ 
introduced in \refS{S:arithmetic-model}. 

\subsection{Change of coordinates in the near-parabolic renormalisation}\label{SS:changes of coordinates} 
Let us fix an arbitrary $h: U_h \to \mathbb{C}$ in $\bigcup_{\ga \in (0,r_3]} \QIS_\ga$. 
Recall from Propositions~\ref{P:Ino-Shi1} and \ref{P:petal-geometry}, the Fatou coordinate 
\[\gF_h: \C{P}_h \to  \{w\in \D{C}  \mid 0 \leq \Re w \leq 1/\ga - c_1\}.\]
Also, recall from \refS{SS:renormalisation-def} the domain $S_h \subset \C{P}_h$ and the integer $k_h$ 
satisfying $h\co{k_h}(S_h) \subset \C{P}_h$.
Consider the set  
\begin{multline*}
\gP_h=\{w\in \D{C} \mid 0 \leq \Re w \leq 1/\ga-c_1, \Im w \geq -2\} 
\cup \{\gF_h(S_h)+l \mid l \in \D{Z}, 0 \leq l \leq k_h\}. 
\end{multline*}
The functional relation in \refP{P:Ino-Shi1}-(e) allows one to extend $\gF_h^{-1}$ onto $\gP_h$. 
For $w \in \gF_h(S_h)+l$ one defines $\gF_h^{-1}(w)=h\co{l} \circ\gF_h^{-1}(w-l)$. 
It follows from \refP{P:Ino-Shi1}-(e) that this is a well-defined holomorphic map, and 
satisfies $\gF_h^{-1}(w+1)= h \circ \gF_h^{-1}(w)$ whenever both sides are defined. 
However, $\gF_h^{-1}: \gP_h \to \D{C} \setminus \{0\}$ is not univalent any longer. 
It has a critical point which is mapped to $-4/27$. 

We may lift $\gF_h^{-1}: \gP_h \to \D{C}\setminus \{0\}$ via $\ex: \D{C} \to \D{C}\setminus \{0\}$ 
to define the holomorphic map
\begin{equation}\label{E:gc_h}
\gU_h=\ex^{-1} \circ \gF_h^{-1}: \gP_h \to \D{C}, \quad \gU_h(+1)=+1. 
\end{equation}

\subsection{Estimates on the change of coordinates}\label{SS:estimates-fatou-coordinates}
To understand the behaviour of $\gU_h$, we need some estimates on $\gF_h$. 
However, obtaining good estimates on $\gF_h$ are not trivial. 
Following \cite{Sh98}, a general idea is to compare $\gF_h$ to an explicit formula, as we explain below.

Recall that $h$ has two fixed points $0$ and $\gs_h$ on $\partial \C{P}_h$.
Consider the covering map 
\begin{equation*}\label{E:covering-formula}
\gt_h(\gz)= \frac{\gs_h}{1-e^{- 2 \pi \ga i \gz}}: \D{C} \to \hat{\D{C}} \setminus \{0, \gs_h\}.
\end{equation*}
Evidently, $\gt_h (\gz+1/\ga)=\gt_h(\gz)$, $\lim_{\Im \gz \to + \infty} \gt_h(\gz)=0$, 
and $\lim_{\Im \gz \to -\infty} \gt_h(\gz) = \gs_h$.

We may lift $\gF_h^{-1}: \Pi_h \to \D{C}\setminus \{0, \gs_h\}$ via the covering $\gt_h: \D{C} \to \hat{\D{C}} \setminus \{0, \gs_h\}$, to define 
\begin{equation*}\label{E:L_h-def}
L_h : \gP_h \to \D{C}. 
\end{equation*}
That is, $\gt_h \circ L_h = \gF_h^{-1}$ on $\gP_h$. 
However, this map is only determined up to translations by elements of $\D{Z}/\ga$. 
We choose the branch so that $L_h(\Pi_h)$ separates $0$ from $1/\ga$. 
Such branch exists because $L_h(\Pi_h)  \cap (\D{Z}/\ga)=\emptyset$, 
and $\gt_h^{-1}(\C{P}_h)$ is a $1/\ga$-periodic set, whose components are 
simply connected regions in $\D{C}\setminus (\D{Z}/\ga)$ which spread
from $+i \infty$ to $-i \infty$. 

Estimates on $L_h$ lead to estimates on $\gF_h$ through the explicit formula $\gt_h$. 
One may employ classic distortion estimates on univalent mappings from complex analysis 
to derive some estimates on $L_h$. 
One may refer to \cite{Sh98,Yoc95} for some general results about this. 
For specific estimates on $L_h$, one may refer to \cite{IS06}, \cite[Section 5]{Che13} and \cite[Section 6]{Che19}.
Below, we present only the estimates we need in this paper (these are only used in \refS{S:coordinates-vs-model}). 

\begin{propo}\label{P:estimates-L_h}
There is a constant $C_1$ such that for all $h \in \bigcup_{\ga \in (0,r_3]} \QIS_\ga$, we have 
\begin{itemize}
\item[(i)] for all $w \in \gP_h$ with $\Im w \geq 1$, $|L'_h(w)-1|\leq C_1 / \Im w$, 
\item[(ii)] for all $w \in \gP_h$, $|L_h(w) - w| \leq C_1 \log (1/\ga)$,
\item[(iii)] as $\Im w \to +\infty$ in $\gP_h$, $L_h(w)-w$ tends to a constant, 
\item[(iv)] for all $w \in \gP_h$, $|L_h(w) - w| \leq C_1 \log (2+ d(w, \D{Z}/\ga))$.
\end{itemize}
\end{propo}

Part (i) is an application of the Koebe distortion theorem and the functional equation for the Fatou coordinates, 
see for instance \cite[Lemma 6.7-(4)]{Che19}.
For (ii) see \cite[Proposition 6.19]{Che19}, and for (iii) see \cite[Lemma 6.9]{Che19}. 
To get (iv), when $\Im w \geq 1/\ga$ one uses part (ii), and when $\Im w < 1/\ga$ one 
uses \cite[Proposition 6.15, Proposition 6.17]{Che19}, integrates the bound in part (i), and uses the inequality 
$\log(a) + \log (b) \leq 2 \log (a+b)$ for all $a, b > 0$. 

In \cite{Che13}, quasi-conformal methods have been employed to obtain an exponentially decaying estimate 
on $|L_h'(w)-1|$. We do not need that fine estimate here. 

By \refP{P:estimates-L_h}-(i), and differentiation of the explicit formula $\tau_h$, we get
\begin{equation}\label{E:gU'-asymptote}
\lim_{\Im w \to +\infty, w\in \gP_h}  \gU_h'(w)= \ga.
\end{equation} 

\subsection{Dropping the non-linearity}
\label{SS:model-vs-change-of-coordinate}
Recall that $Y_\ga$ is defined on $\D{H}'=\{w\in \D{C} \mid \Im w> -1\}$. 
We aim to compare $\gU_h$ to $Y_\ga$, knowing that they have different domains of definitions. 
Below, we state a general form of such estimates, and later apply it to more specific domains. 

\begin{propo}\label{P:gc-vs-Y-value}
There is a constant $C_3$ such that for all $h \in \bigcup_{\ga \in (0,r_3]} \QIS_\ga$, all $w_1 \in \D{H}'$ and all $w_2 \in \gP_h$, we have 
\[|\gU_h(w_2) - Y_\ga(w_1) |  \leq C_3 \max \{1, |w_1-w_2|\}.\]
\end{propo}

\begin{proof}
We shall use the decomposition of $\gU_h$ as $\ex^{-1} \circ \gt_h \circ L_h$ on $\Pi_h$. 
Let $g_h= \ex^{-1} \circ \gt_h$. We divide the proof into two steps. 

\medskip

{\em Step 1.} 
There is a constant $D_1$, independent of $\ga$ and $h$, such that for every $w_1 \in \D{H}'$, we have 
\begin{equation}\label{E:P:gc-vs-Y-value-1}
|g_h(w_1+1/2+3i/2)-Y_\ga(w_1)|\leq D_1.
\end{equation}

\smallskip 

In the above inequality, the constant $1/2+ 3i/2$ is chosen to make sure that $g_h$ is defined, and also to 
simplify the calculations. Let us set $w_1'=w_1+1/2+3i/2$.
By explicit calculations, 
\begin{equation}\label{E:P:gc-vs-Y-value-5}
\Im g_h(w'_1) - \Im Y_\ga(w_1)
= \frac{1}{2\pi} \log \Big |\frac{4 (e^{-3\pi\ga}-e^{\pi\ga i})}{27 e^{-3\pi \ga} \gs_h}\Big|. 
\end{equation}
It follows from the Koebe distortion theorem that $\{h''(0)\mid h \in \IS_0\}$ is relatively compact in 
$\D{C} \setminus \{0\}$ (see \cite{IS06} for more details). This implies that there is a constant $D$, independent of 
$\ga$ and $h$, such that $\ga/D \leq  |\gs_h| \leq D\ga$. 
On the other hand, for all $\ga \in (0, 1/2]$, we have 
\begin{equation}
|e^{-3\pi \ga}-e^{\pi \ga i}| \leq |e^{-3\pi \ga} -1| + |1-e^{\pi \ga i}| \leq 3\pi \ga +\pi \ga= 4\pi \ga, 
\end{equation}
and 
\begin{equation} 
|e^{-3\pi \ga} - e^{\pi \ga i}| \geq |\Im (e^{-3\pi \ga}-e^{\pi \ga i})| =\sin (\pi \ga) \geq \pi \ga/2.
\end{equation}
These imply that $\pi/(2D) \leq |(e^{-3\pi\ga}-e^{\pi\ga i})/\gs_h| \leq 4\pi D$. 
Combining with $e^{-3\pi/2} \leq e^{-3\pi\ga} \leq 1$, we conclude that the left hand side of \refE{E:P:gc-vs-Y-value-5} is uniformly bounded from above 
and below. 

On the other hand, $g_h$ maps $\{w \in \D{C} \mid 0\leq  \Re w \leq 1/\ga, \Im w\geq 1/2\}$ into a vertical strip 
of width $+1$ whose projection onto the real axis contains $+1$. Similarly, $Y_\ga$ maps 
$\{w\in \D{H}' \mid 0 \leq \Re w \leq 1/\ga\}$ into the vertical strip $\{w\in \D{C} \mid 0 \leq \Re w \leq 1\}$. 
Using $Y_\ga(w+1/\ga)= Y_\ga(w)+1$ and $g_h(w+1/\ga)=g_h(w)+1$, one concludes that 
\begin{align*}
|\Re g_h(w'_1) - \Re Y_\ga(w_1)| & \leq | \Re g_h(w'_1)-\Re Y_\ga(w'_1)| + |\Re Y_\ga(w'_1)-\Re Y_\ga(w_1)| \\
& \leq 2+ \ga/2 \leq 9/4.
\end{align*}
This completes the proof of Step 1.   

\medskip

{\em Step 2.} There is a constant $D_2$, independent of $\ga$ and $h$, such that for all $w_3 \in \Pi_h$ 
and all $w_4 \in \D{C}$ with $\Im w_4 \geq 1/2$ we have 
\begin{equation}\label{E:P:gc-vs-Y-value-2}
\big|g_h \circ L_h(w_3) - g_h(w_4) \big| \leq D_2 \max \{1, |w_3-w_4|\}.
\end{equation} 

\smallskip 
 
Recall the constant $C_1$ from \refP{P:estimates-L_h}, and choose a constant $D$ such that 
$D /\ga - C_1 \log (1+1/\ga) \geq 1/\ga$ for all $\ga \in (0, 1/2]$.  
We break the proof into three cases. 

$\bullet$ $\Im w_3 \geq D/\ga$: 
By \refP{P:estimates-L_h}-(ii), $|L_h(w_3)-w_3|\leq C_1 \log (1+1/\ga)$, and hence 
$\Im L_h(w_3)\geq 1/\ga$. 
For $\Im w \geq 1/\ga$, $|g_h'(w)|=O(\ga)$, and for $\Im w\geq 1/2$, $|g_h'(w)|= O(1)$, with uniform constant in 
$O$ independent of $\ga$, $h$ and $w$. 
Then, 
\begin{align*}
|g_h\circ L_h(w_3)- g_h(w_4)| & \leq |g_h \circ L_h(w_3) - g_h(w_3)|+ |g_h(w_3)-g_h(w_4)| \\
& \leq O(\ga) \cdot |L_h(w_3)-w_3|+ O(1) \cdot |w_3-w_4| \\
& \leq O(\ga) \cdot O(\log (1/\ga))+ O(|w_3-w_4|) \leq O(1) + O(|w_3-w_4|).
\end{align*}

$\bullet$ $1/2\leq \Im w_3 \leq D/\ga$: 
By \refP{P:estimates-L_h}-(iv), $|L_h(w_3)-w_3| \leq C_1 \log (2+ d(w_3, \D{Z}/\ga))$.  
For $1/2 \leq \Im w \leq D/\ga$, $|g_h'(w)|=O(1/d(w, \D{Z}/\ga))$. 
Because $\log (1+d(w,\D{Z}/\ga)) \cdot 1/d(w,\D{Z}/\ga)$ is uniformly bounded from above, 
we conclude that $|g_h\circ L_h(w_3)- g_h(w_3)|$ is uniformly bounded from above. 
As in the above equation, one obtains the desired inequality in this case as well. 

$\bullet$ $\Im w_3\leq 1/2$: Let $w_3'=\Re w_3+i/2$. 
It follows from the Koebe distortion theorem that $\Im w_3$ is uniformly bounded from below (see proof of 
Proposition 2.7 in \cite{Che19}), and hence $|w_3' -w_3| = O (1)$. 
As in the previous cases, we get $|g_h \circ L_h(w_3) - g_h(w_3')| = O(1)$, and therefore 
\begin{align*}
|g_h \circ L_h(w_3) - g_h(w_4)|  & \leq |g_h \circ L_h(w_3) - g_h(w_3')|+ |g_h(w_3') - g_h(w_4)| \\
& = O(1) + O(1) |w_3'-w_4| \\
& = O(1)+O(1+ |w_3-w_4|).
\end{align*}

\medskip

To complete the proof of the proposition, one uses \eqref{E:P:gc-vs-Y-value-1}, \eqref{E:P:gc-vs-Y-value-2}, 
and the triangle inequality  
\begin{equation*}
\big|\gU_h (w_2) - Y_\ga(w_1)\big|  
 \leq \big|g_h \circ L_h(w_2) - g_h(w'_1)\big| + \big|g_h(w'_1) - Y_\ga(w_1)\big|.\qedhere
\end{equation*}
\end{proof}

\begin{propo}\label{P:asymptotics-similar}
For all $\ga \in (0,r_3]$ and all $h \in \QIS_\ga$,
\[\lim_{\Im w\to +\infty; w\in \Pi_h} (\gU_h(w) - Y_\ga(w)),\]
exists and is finite. 
\end{propo}

\begin{proof}
Recall that $\gU_h= \ex^{-1} \circ \gt_h \circ L_h$ on $\Pi_h$, and define $g_h= \ex^{-1} \circ \gt_h$. 
By elementary calculations one may see that $g_h(w) - Y_\ga(w)$ tends to a finite constant as $\Im w \to +\infty$.
Also, for all $w_1$ and $w_2\in \D{C}$, $g_h(w_1+w_2) - g_h(w_1)$ tends to a finite constant  
as $\Im w_1 \to +\infty$. 
On the other hand, by \refP{P:estimates-L_h}-(iii), $L_h(w) - w$ tends to a constant as $\Im w \to +\infty$ within 
$\Pi_h$. 
\end{proof} 
\section{Marked critical curve}\label{S:parametrised curve}
In this section we identify a collection of simple curves with special parametrisation (marking), which satisfy 
some geometric and equivariant properties under the renormalisation. 
These will be employed to build partitions of the post-critical set in \refS{S:modified nest}. 

\subsection{Repeated renormalisations}\label{SS:sequence-of-renormalisations}
Fix an arbitrary $\ga \in \D{R}\setminus \D{Q}$, and let $(\ga_i)_{i\geq 0}$ denote the sequence generated 
by the modified continued fraction algorithm in \refS{SS:modified-fractions}.  
Recall the complex conjugation map $s(z)=\ol{z}$. 

\begin{propo}\label{P:sequence-renormalisations}
There exists a positive integer $N$ such that for all $\ga$ in $\irr$ and all $f$ in $\QIS_\ga$, 
the following sequence of maps is defined  
\[f_0 = 
\begin{cases}
f & \tif \gep_0=+1, \\
s \circ f \circ s & \tif \gep_0=-1,
\end{cases}
 \qquad f_{n+1}= 
\begin{cases}
\rr (f_n) & \text{ if } \gep _{n+1}=-1, \\
s \circ \rr(f_n) \circ s &   \text{ if } \gep_{n+1}=+1, 
\end{cases}\] 
and for all $n\geq 0$ we have $f_0\in \QIS_{\ga_0}$, $f_{n+1} \in \IS_{\ga_{n+1}}$, $f_n: U_{f_n} \to  \D{C}$, 
$f_n(0)=0$, $f_n'(0)=e^{2\pi i \ga_n}$.
\end{propo}

\begin{proof}
Let $N \geq 1/r_3+1/2$, where $r_3$ is introduced in \refT{T:Ino-Shi2}.
Assume that $\ga$ has modified continued fraction $a_{-1} + \gep_0/(a_0 + \gep_1/(a_1+ \dots ))$ with $a_i\geq N$, 
for all $i\geq 0$. 
For all $i\geq 0$, 
\[1/\ga_i=a_i + \gep_{i+1}\ga_{i+1} \geq N - 1/2 \geq 1/r_3,\]
and hence $\ga_{i}\in (0,r_3]$. 

First we note that for every $\gamma\in \mathbb{R}$, $h \in \QIS_\gamma$ iff $s \circ h \circ s \in \QIS_{-\gamma}$. 
When $h=Q_\gamma$, we have $s \circ Q_\gamma \circ s= Q_{-\gamma}$.
So assume that $h \in \IS_\gamma$, with $h(z)=P \circ \gy^{-1} (e^{2\pi i \gamma} z)$. 
Since, $s \circ P= P \circ s$, and $s(U)=U$, we have 
\[s \circ h \circ s(z)= s \circ P \circ \gy^{-1} (e^{2\pi i \gamma} \ol{z})
=P \circ s \circ \gy^{-1} \circ s(e^{-2\pi i \gamma} z),\]
where $s \circ \gy \circ s: U \to \D{C}$ is holomorphic, maps $0$ to $0$, and has derivative $+1$ at $0$. 

If $\gep_0=+1$, then $\ga= a_{-1}+ \ga_0$, and hence $f_0=f\in \QIS_\ga=\QIS_{\ga_0}$.  
If $\gep_0=-1$, then $\ga= a_{-1} - \ga_0$, and hence $f\in \QIS_\ga= \QIS_{-\ga_0}$. Then, by 
the above paragraph, $f_0= s \circ f \circ s \in \QIS_{\ga_0}$. 

Now assume that $f_n$ is defined and belongs to $\IS_{\ga_n}$. 
Since $\ga_n\in (0,r_3]$, by \refT{T:Ino-Shi2}, $\rr(f_n)$ is defined and belongs to $\IS_{-1/\ga_n}$. 
Recall that $-1/\ga_{n}= -a_{n} -\gep_{n+1} \ga_{n+1}$, which gives $\rr(f_n) \in \IS_{-\gep_{n+1}\ga_{n+1}}$. 
If $\gep_{n+1}=+1$, by the above paragraph, $f_{n+1}=s \circ \rr(f_n) \circ s \in \IS_{\ga_{n+1}}$. 
If $\gep_{n+1}=-1$, then $f_{n+1}= \rr(f_n) \in \IS_{\ga_{n+1}}$.
\end{proof}

\subsection{Successive changes of coordinates}\label{SS:coordinates-tower}
Recall $\Pi_h$, $\gF_h^{-1}: \Pi_h \to \D{C}\setminus \{0\}$, and $\gU_h:\Pi_h \to \D{C}$ from 
\refS{SS:changes of coordinates}. 
For $n\geq 0$, we use the notations 
\begin{equation}\label{E:gU-defn}
\gU_{n}=
\begin{cases}
\ex^{-1} \circ \gF_{n}^{-1} & \tif  \gep_{n}=-1, \\
\ex^{-1} \circ s \circ \gF_{n}^{-1} & \tif  \gep_{n}=+1,
\end{cases}, 
\quad \Pi_n=\Pi_{f_n}, \quad \gF_n^{-1}= \gF_{f_n}^{-1}: \Pi_n \to \D{C}\setminus \{0\}. 
\end{equation}
with the normalisations 
\begin{equation}\label{E:gc_n(0)}
\gU_n(+1)=+1. 
\end{equation}
\begin{figure}[h]
\centering
\begin{minipage}{.4\textwidth}
\[\xymatrix{
& \D{C} \ar[dl]_\ex \\ 
\mathbb{C}\setminus \{0\} & \gP_n \ar[l]^{\, \, \, \,\, s \circ \gF_n^{-1}} \ar[u]_{\gU_n}}
\]   
\end{minipage}
\qquad
\begin{minipage}{.4\textwidth}
\[\xymatrix{
& \D{C} \ar[dl]_\ex \\ 
\mathbb{C}\setminus \{0\} & \gP_n \ar[l]^{\, \, \, \, \, \, \gF_n^{-1}} \ar[u]_{\gU_n}}
\]
\end{minipage}
\caption{The change of coordinate $\gU_n$, with $\gep_n=+1$ on the left and $\gep_n=-1$ on the right.}
\label{F:successive-coordinates}
\end{figure}
Each $\gU_n$ is either holomorphic or anti-holomorphic. See \refF{F:successive-coordinates}.   

Consider the set  
\[\gP= \{w\in \D{C}   \mid 1/2 \leq \Re w \leq 3/2, \Im w\geq -2\}.\]

\begin{lem}\label{L:well-inside-lifts}
There is a constant $\gd_1>0$ such that for all $h \in \bigcup_{\ga \in (0, r_3]}\QIS_\ga$, we have
\[B_{\gd_1} (\gU_h(\gP)) \subseteq \gP.\] 
\end{lem}

\begin{proof}
By \cite{IS06}, for every $h \in \QIS_0$, $A_h^{-k}$ and $B_h^{-k}$ are contained in the repelling Fatou 
coordinate of $h$ for large enough $k$, and hence they are defined for all values of $k\geq 0$. 
Comparing to their notations, $A_h \cup B_h$ is contained in the union 
$\psi_0(D_0) \cup \psi_0(D_1) \cup \psi_0(D_0^{\sharp}) \cup \psi_0(D_1^{\sharp})$, where $\psi_0(z)=-4/z$. 
See Section 5.A--Outline of the proof in \cite{IS06}. 
They prove in Propositions~5.6 and 5.7 that the closure of
$D_0 \cup D_1 \cup D_0^{\sharp} \cup D_1^{\sharp}$ does not intersect the negative real axis. 
In particular, it follows that for all $z\in A_h \cup B_h$, $d(\Re \ex^{-1}(z), \D{Z}) < 1/2$. 
By the compactness of $\QIS_0$, there is a constant $C<1/2$ such that for all 
$h\in \QIS_0$ and all $z\in A_h \cup B_h$, $d(\Re \ex^{-1}(z), \D{Z}) < C$. 
Then, by the continuous dependence of the Fatou coordinate on the map, one may see that there is $C'< 1/2$
such that for all small enough $\ga$, all $h\in \QIS_\alpha$, and all $z\in A_h \cup B_h$, 
$d(\Re \ex^{-1}(z), \D{Z}) < C' < 1/2$. 
Since, $\gF_h^{-1}(\Pi)= A_h \cup B_h$, we conclude that there is $\gd_1 > 0$ such that 
$B_{\gd_1}(\gU_h(\Pi))$ is contained in the vertical strip  $1/2 \leq \Re w \leq  3/2$.
On the other hand, $A_h \cup B_h$ is contained in the image of $h$, and the image of $h$ is contained well-inside 
the disk of radius $4 e^{4\pi}/27$ centred at $0$. Therefore, by making $\gd_1$ smaller if necessary, 
$\Im (B_{\delta_1}(\gU_h(\gP)) \subset [-2, +\infty)$. 

In \cite{IS06}, the constant $r_3$ in \refT{T:Ino-Shi2} is obtained from a continuity property of the locations of 
$D_0 \cup D_1 \cup D_0^{\sharp} \cup D_1^{\sharp}$ with respect to $h$. 
As such, it is implicitly assumed that the inclusion in the lemma also holds for small perturbations of 
$h \in \QIS_0$. Because of this we do not introduce a new constant for small enough $\ga$, and assume that the same 
constant $r_3$ works here as well.
\end{proof}

Let $\gr(z) |dz|$ denote the Poincar\'e metric of constant curvature $-1$ on $\interior(\gP)$.  
By classic complex analysis, when $f: \D{D} \to \D{D}$ is holomorphic with $f(\D{D})$ compactly contained in 
$\D{D}$, $f$ is uniformly contracting with respect to the Poincar\'e metric on $\D{D}$. 
However, here $\gU_n(\Pi)$ is not compactly contained in $\Pi$. 
Despite that, the uniform space provided by \refL{L:well-inside-lifts} allows us to recover the uniform contraction, 
as we discuss below. 
Let $\gU_n^* \gr$ denote the pull-back of $\gr$ on $\interior(\gP)$ by $\gU_n: \gP \to \gP$. 

\begin{propo}\label{P:uniform-contraction}
There is a constant $\gd_2 < 1$ such that for every $n \geq 0$ and every $w \in \interior(\gP)$, 
\[(\gU_n^* \gr)(w) \leq \gd_2 \gr(w).\]
\end{propo}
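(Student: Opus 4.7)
The plan is to combine the Schwarz-Pick lemma with the Euclidean buffer from Lemma~\ref{L:well-inside-lifts}. Each $\gc_n$ is either holomorphic or anti-holomorphic depending on $\gep_n$, but since $\gP$ is preserved by the reflection $w \mapsto 2 - \ol{w}$, which is an isometry for the Poincar\'e metric, I may reduce to the holomorphic case. Under the convention that $\gP_n$ denotes the fixed domain $\gP$ (so $\gr_n = \gr_\gP$ for all $n$), the claim becomes $\gc_n^* \gr_\gP \leq \gd_2\, \gr_\gP$ with $\gd_2 < 1$ independent of $n$.

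Next I would factor $\gc_n = \iota_n \circ \wt{\gc}_n$, where $\wt{\gc}_n : \gP \to \gc_n(\gP)$ is the corestriction and $\iota_n : \gc_n(\gP) \hookrightarrow \gP$ is the inclusion. Schwarz-Pick applied to the surjective map $\wt{\gc}_n$ gives $\wt{\gc}_n^* \gr_{\gc_n(\gP)} \leq \gr_\gP$ pointwise. Hence it suffices to establish an $n$-independent bound of the form $\iota_n^* \gr_\gP \leq \gd_2\, \gr_{\gc_n(\gP)}$ on $\gc_n(\gP)$.

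For this bound I would invoke Lemma~\ref{L:well-inside-lifts} to obtain the inclusion $\gc_n(\gP) \subset \gP^\circ := \{w \in \gP : \operatorname{dist}(w, \partial \gP) \geq \gd_1\}$. A second application of Schwarz to $\gc_n(\gP) \hookrightarrow \gP^\circ$ gives $\gr_{\gc_n(\gP)} \geq \gr_{\gP^\circ}$ on $\gc_n(\gP)$, so the problem reduces to the purely geometric and $n$-independent estimate $\gr_\gP/\gr_{\gP^\circ} \leq \gd_2 < 1$ on $\gP^\circ$. Since $\gP$ is a half-infinite rectangle, its Poincar\'e density is asymptotic, as $\Im w \to +\infty$, to that of the bi-infinite strip of width $1$, whose explicit density $\pi/\sin(\pi(\Re w - 1/2))$ gives a limiting ratio of at most $1 - 2\gd_1$ at the cusp (attained at $\Re w = 1$). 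By strict Schwarz the ratio is pointwise $<1$ at every finite point of $\gP^\circ$, and combining this with the cusp asymptotic via compactness of the truncations $\gP^\circ \cap \{\Im w \leq R\}$ for large $R$ yields a single constant $\gd_2 < 1$ dominating the ratio globally.

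The main obstacle is precisely this last uniform supremum bound: the interior of $\gP^\circ$ is handled by the strict Schwarz-Pick inequality combined with compactness, while the non-compact cusp end $\Im w \to +\infty$ is handled by the explicit Poincar\'e density of the strip. Matching these two regimes --- in particular, verifying that the convergence of the Poincar\'e metric of $\gP$ to that of the limit strip is uniform enough as $\Im w \to +\infty$ to pass from a pointwise strict inequality to a uniform one --- is the only non-routine ingredient, and once it is in place the resulting $\gd_2 < 1$ depends only on $\gd_1$ and the fixed geometry of $\gP$.
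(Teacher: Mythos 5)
Your proposal is correct in outline but takes a genuinely different route from the paper. Both proofs begin with the same factorization of $\gc_n$ into a surjective corestriction (handled by Schwarz--Pick) followed by an inclusion into the larger domain, and both invoke Lemma~\ref{L:well-inside-lifts} for the $\gd_1$-buffer. The divergence is in how the inclusion's strict contraction factor is extracted. The paper avoids any Poincar\'e-density comparison or cusp asymptotics: for a fixed $\xi_0$ in the image it constructs the explicit test map
\[H(\xi)=\xi+\frac{\gd\,(\xi-\xi_0)}{\xi-\xi_0+2c_4+1},\]
which moves each point by less than $\gd$ (this uses only that $\Re(\xi-\xi_0)$ is uniformly bounded, i.e.\ the bounded real-part spread of the image), so $H$ maps the image into the ambient domain; since $H'(\xi_0)=1+\gd/(2c_4+1)>1$, Schwarz--Pick applied to $H$ instantly yields the pointwise bound with $\gd_2=(2c_4+1)/(2c_4+1+\gd)$, valid simultaneously at every point including the cusp. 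Your approach instead reduces to showing $\sup_{\gP^\circ}\gr_\gP/\gr_{\gP^\circ}<1$ and handles the non-compactness by comparing with the density $\pi/\sin(\pi(\Re w-1/2))$ of the limit strip; this works, but, as you rightly flag, it requires verifying that the convergence of the half-strip density to the full-strip density as $\Im w\to+\infty$ is uniform enough across the full width of $\gP^\circ$, including near its lateral boundary where both densities blow up (there the ratio $\gr_\gP/\gr_{\gP^\circ}$ tends to $0$, which saves the argument, but this has to be said). The paper's test-map trick sidesteps that matching problem entirely and produces an explicit constant, at the cost of being less conceptually transparent than your limit-strip picture.
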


\begin{proof}
Let $\rho_n(z)|d z|$ denote the Poincar\'e metric on $\gU_n(\interior(\gP))$. 
By the Schwartz-Pick Lemma, $\gU_n: (\interior(\Pi), \gr) \to (\gU_n(\interior(\Pi)), \rho_n)$ is non-expanding. 
It is enough to show that the inclusion map from $(\gU_n(\interior(\Pi)), \rho_n)$ to $(\interior(\Pi), \gr)$ is 
uniformly contracting.   

Fix an arbitrary $\xi_0$ in $\gU_n(\Pi)$. 
Using $\gd_1$ from \refL{L:well-inside-lifts}, consider $H: \gU_n(\Pi) \to \D{C}$ defined as 
\[H(\xi)=\xi+\frac{\delta_1 (\xi-\xi_0)}{\xi-\xi_0+2}.\]
For $\gx \in \gU_n(\Pi)$ we have $|\Re(\xi-\xi_0)| < 1$. 
This implies that $|\xi-\xi_0|<|\xi-\xi_0+2|$, and hence  $|H(\xi)-\xi| < \delta_1$.
It follows from \refL{L:well-inside-lifts} that $H$ maps $\gU_n(\Pi)$ into  $\Pi$. 
By Schwartz-Pick Lemma, $H$ is non-expanding with respect to the corresponding metrics. 
In particular, at $H(\xi_0)=\xi_0$,  
\[\rho(\xi_0) |H'(\xi_0)|= \rho(\xi_0)(1+\delta_1/2) \leq \rho_n(\xi_0).\]
Hence,  
\[\rho(\xi_0) \leq \left(\frac{2}{2+\delta_1} \right) \rho_n(\xi_0).\]
The uniform (independent of $n$) contraction factor is $\gd_2=2/(2+\gd_1)$. 
\end{proof}

\subsection{Critical curve}\label{SS:u_n-inductive-definition}
Inductively, we define the curves 
\[u_n^j: i [0, +\infty) \to \Pi,\] 
for $j\geq 0$ and $n\geq -1$. 
For $j=0$ and all $n \geq -1$, let $u_n^0(it)= 1+  i  t$.
Assume that for some $j \geq 0$, and all $n\geq -1$, $u_n^j$ is defined. Then, for all $n\geq -1$, let 
\begin{equation}\label{E:u_n^j-induction}
u_n^{j+1}= \gU_{n+1} \circ  u_{n+1}^j \circ  Y_{n+1}^{-1}. 
\end{equation}

\begin{lem}\label{L:well-defined-u_n^j}
For all $n\geq -1$ and $j\geq 0$, $u_n^j: i [0, +\infty) \to \Pi$ is a well-defined analytic map satisfying $u_n^j(0)=+1$. 
\end{lem}

\begin{proof}
Recall that each $\gU_n$ is either holomorphic or anti holomorphic, and each $Y_n$ is real analytic. 
By \refE{E:invariant-imaginary-line}, for every $t\geq 0$, $\Im Y_{n+1}^{-1}(it) \in  [0, +\infty)$, and by 
\refL{L:well-inside-lifts}, $\gU_n(\gP) \subset \gP$. 
These imply that each $u_n^j$ is well-defined and analytic. 
Also, since $Y_{n+1}(0)=0$ and $\gU_{n+1}(+1)=+1$, for all $n \geq -1$, inductively, one concludes that 
$u_n^{j}(0)=1$. 
\end{proof}

Recall the constant $\gd_2$ introduced in \refP{P:uniform-contraction}. 

\begin{propo}\label{P:u_n-top}
There is a constant $C_2$ such that for all $n \geq -1$, all $j\geq 0$, and all $t \geq 0$, 
\[|u_n^{j+1}(it) - u_n^{j}(it)| \leq C_2 (\gd_2)^j.\]
In particular, for every $n\geq -1$, as $j \to +\infty$, $u_n^j$ converges to a continuous map 
$u_n: i[0, +\infty) \to \Pi$. 
\end{propo}

\begin{proof}
By \refP{P:gc-vs-Y-value}, with $\ga=\ga_{n+1}$, $h=f_{n+1}$, $w_1=Y_{n+1}^{-1}(it)$ and 
$w_2=Y_{n+1}^{-1}(it)+1$,
\begin{align*}
|u_n^1(it) - u_n^0(it)| &= |\gU_{n+1} (Y_{n+1}^{-1}(it)+1) - (it+1)| \\
& \leq |\gU_{n+1} (Y_{n+1}^{-1}(it)+1) - Y_{n+1}(Y_{n+1}^{-1}(it))|+1 
\leq C_3+1.
\end{align*}
Recall the Poincar\'e metric $\gr(z) |dz|$ on $\interior(\gP)$. 
One has the classic bounds $1/(2d(z, \partial \Pi)) \leq \gr(z) \leq 2/d(z, \partial \gP)$. 
In particular, $\gr\geq 1$ on $\interior(\Pi)$. 
On the other hand, by \refL{L:well-inside-lifts}, $d(u_n^1(it), \partial \Pi)\geq \gd_1$ which implies that 
$d_{\gr}(u_n^1(it), u_n^0(it)) \leq 2 (C_3+1)/\gd_1$. 
Now we apply \refP{P:uniform-contraction}, to see that for $j\geq 1$, 
\begin{align*}
d_\gr(u_n^{j+1}(it), u_n^j(it))  
&=d_\gr \left(\gU_{n+1} \circ u_{n+1}^j \circ Y_{n+1}^{-1}(it), 
\gU_{n+1} \circ u_{n+1}^{j-1} \circ Y_{n+1}^{-1}(it)\right)  \\
& \leq \gd_2 d_\gr \left( u_{n+1}^j\left (Y_{n+1}^{-1}(it)\right ),u_{n+1}^{j-1}\left (Y_{n+1}^{-1}(it)\right) \right). 
\end{align*}
Then, by induction, 
\begin{align*}
d_\gr(u_n^{j+1}(it), u_n^j(it)) \leq (\gd_2)^{j} d_\gr \left(u_{n+j}^1(it'), u_{n+j}^0(it')\right),
\end{align*}
where $it'= Y_{n+j}^{-1} \circ \dots \circ Y_{n+1}^{-1}(it)$. 
Therefore, as $\gr \geq 1$ on $\interior(\Pi)$, 
\begin{equation}\label{E:u_n^j+1-u_n^j}
|u_n^{j+1}(it) - u_n^{j}(it)|  \leq d_\gr(u_n^{j+1}(it), u_n^j(it)) 
\leq  (\gd_2)^{j} d_\gr (u_{n+j}^1(it'), u_{n+j}^0(it')) 
\leq  (\gd_2)^{j} 2(C_3+1)/\gd_1.
\end{equation}
For each $n\geq -1$, $u_n^j$ forms a Cauchy sequence on $i [0, +\infty)$, which implies that 
$u_n^j$ converges to a continuous map $u_n$, as $j \to +\infty$. 
\end{proof}

\begin{propo}\label{P:comm-u-Y-upsilon}
For all $n \geq 0$ and all $t\geq 0$ we have 
\[\gU_n \circ u_n(it)= u_{n-1} \circ  Y_n ( i  t), \; \tand \; u_n(0)=1.\]
\end{propo}

\begin{proof}
These follow from taking limits as $j \to +\infty$ in \refE{E:u_n^j-induction} and \refL{L:well-defined-u_n^j}. 
\end{proof}

\begin{propo}\label{P:near-identity-u_n}
For every $n\geq -1$, and every $t\geq 0$, we have 
\[|u_n(it)- (1+ i  t)| \leq C_2/(1-\gd_2).\]
\end{propo}

\begin{proof}
By \refP{P:u_n-top}, for $j\geq 1$ and $t\geq 0$, we have 
\begin{equation}\label{E:u_n^j-near-identity}
\begin{aligned}
|u_n^j(it)-(1+ i  t)| & = \left | \textstyle{\sum_{l=1}^j} (u_n^l(it) - u_n^{l-1}(it)) \right| 
 \leq \textstyle{\sum_{l=1}^j} C_2 (\gd_2)^{l-1} \leq C_2/(1-\gd_2).
\end{aligned}
\end{equation}
Taking limit as $j\to + \infty$, we conclude the inequality in the proposition. 
\end{proof}

\begin{propo}\label{P:injective-u_n}
For every $n\geq -1$, $u_n: i [0, +\infty) \to \Pi$ is injective. 
\end{propo}

\begin{proof}
Fix an arbitrary $n\geq -1$. 
Let $0\leq t_n < s_n$  be arbitrary real values. 
Inductively, define the sequence of numbers $t_{l+1}= \Im Y_{l+1}^{-1}( i  t_l)$ and 
$s_{l+1}= \Im Y_{l+1}^{-1}( i  s_l)$, for $l \geq n$. 
By \refE{E:uniform-contraction-Y}, for $l \geq n$, $|t_l-s_l| \geq (10/9)^{l-n} |t_n-s_n|$. 
In particular, for large enough $l$, $|t_l - s_l| \geq 3 C_2/(1-\gd_2)$. 
By virtue of \refP{P:near-identity-u_n}, this implies that $u_l(i t_l) \neq u_l(i s_l)$.  
Now, inductively using the commutative relation in \refP{P:comm-u-Y-upsilon}, and the injectivity of $Y_k$ 
and $\gU_k$ for all $k$, we conclude that $u_n(i t_n) \neq u_n(i s_n)$. 
\end{proof}

\begin{propo}\label{P:u_n-asymptotics}
For every $n\geq -1$, $\lim_{t\to +\infty} (u_n(it)- (1+it))$ exists and is finite.
\end{propo}

\begin{proof}
By \refP{P:asymptotics-similar} and the explicit formula for $Y_{n+1}$, the following limit exists and is finite 
\begin{align*}
\lim_{t \to + \infty} (u_n^1(it)- u_n^0(it))
& = \lim_{t \to + \infty}  \left (\gU_{n+1}(Y_{n+1}^{-1}(it)+1) - (1+it) \right )    \\
&=  \lim_{t \to + \infty}  \left (\gU_{n+1}(Y_{n+1}^{-1}(it)+1) -   Y_{n+1}(Y_{n+1}^{-1}(it)+1) \right) \\
& \qquad + \lim_{t \to + \infty}  \left (Y_{n+1}(Y_{n+1}^{-1}(it)+1) - Y_{n+1} (Y_{n+1}^{-1}(it)) \right) -1. 
\end{align*}
By an inductive argument, one may see that for every $j\geq 1$, $\lim_{t \to + \infty} (u_n^j(it)- u_n^{j-1}(it))$ 
exists and is finite. 
Indeed, by \refE{E:u_n^j+1-u_n^j}, the absolute value of this limit is bounded from above by 
$(\gd_2)^{j-1} 2(C_3+1)/\gd_1$.  
It follows that $\lim_{t \to + \infty} (u_n^j(it)- (1+it))$ exists and is finite. 
Since $u_n^j$ converges to $u_n$ uniformly on $i [0,+\infty)$, we conclude the proposition.  
\end{proof}

\begin{rem}
By the argument in this section the limiting curves $u_n$ and their parametrisations 
do not depend on the particular choice of $u_n^0$. 
Any other choice for $u_n^0$ which lies within some uniform distance from $u_n^0$ leads to the same limiting 
curve $u_n$. 
For this reason, one may see that when $\ga \in \E{B}$, the intersection of $\gF_n^{-1}(u_n)$ and the 
Siegel disk of $f_n$ coincides with an internal ray of the Siegel disk of $f_n$. 
\end{rem}

\subsection{An equivariant extension of the critical curve}\label{SS:auxiliary-part-u_n}
We need to extend the curves $u_n$ at the end points $u_n(0)$, while maintaining the functional relation 
in \refP{P:comm-u-Y-upsilon}. 
There are many choices for such extensions, as we present the details below. 
This is rather arbitrary, and will be only used for technical aspects in the later parts of the paper. 

Let us define the numbers $t_n^j$, for $n\geq -1$ and $j\geq 0$ according to 
\[t_n^0= -1,\; \tfor n\geq -1, \q \tand \q  t_n^j= \Im Y_{n+1}( i  t_{n+1}^{j-1}), \; \tfor n\geq -1 \tand j\geq 1.\] 

\begin{lem}\label{L:t_n^j-monotone}
For every $n\geq -1$, we have $t_n^0 < t_n^1 < t_n^2 < \dots < 0$ with $t_n^j \to 0$ as $j \to + \infty$. 
\end{lem}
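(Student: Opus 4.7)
The plan is to establish three facts about the iteration, in the order: (i) each $t_n^j$ lies in $[-1, 0)$; (ii) the sequence is strictly increasing in $j$; (iii) the limit is $0$. The main ingredients are already in hand: Lemma~\ref{L:Y-domain} (injectivity of $Y_n$ on $\ol{\D{H}}_{-1}$ and $Y_n(\ol{\D{H}}_{-1}) \subset \D{H}_{-1}$), Equations~\eqref{E:invariant-imaginary-line} and \eqref{E:Y_n(0)} ($Y_n$ preserves the imaginary ray and fixes $0$), and Lemma~\ref{L:uniform-contraction-Y} ($9/10$-contraction).

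The first step is to note that the map $t \mapsto \Im Y_{n+1}(\Ni t)$ is a \emph{strictly increasing} continuous function from $[-1, +\infty)$ into $(-1, +\infty)$ with value $0$ at $t=0$. Monotonicity follows from the injectivity of $Y_{n+1}$ on the vertical line $\Ni[-1,+\infty)$ combined with continuity; the sign of monotonicity is then pinned down by the asymptotic $\Im Y_{\ga_{n+1}}(\Ni t) \sim \ga_{n+1} t \to +\infty$ as $t \to +\infty$, which is immediate from the explicit formula for $Y_r$. With this in place, an easy induction on $j$ gives $t_n^j \in [-1,0)$ for all $n, j$: for $j=0$ it is the definition, and for $j\geq 1$ the inductive hypothesis yields $\Ni t_{n+1}^{j-1} \in \Ni[-1,0)$, whence $t_n^j = \Im Y_{n+1}(\Ni t_{n+1}^{j-1}) < \Im Y_{n+1}(0) = 0$ by monotonicity, and $t_n^j > -1$ by Lemma~\ref{L:Y-domain}.

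Strict monotonicity in $j$ is then proved by a second induction on $j$, uniform in $n$. The base case $t_n^0 = -1 < t_n^1$ follows because $t_n^1 = \Im Y_{n+1}(-\Ni) > -1$ by Lemma~\ref{L:Y-domain}. The inductive step is immediate: if $t_{n+1}^{j-1} < t_{n+1}^{j}$ for all $n$, then applying $\Im Y_{n+1}(\Ni \cdot)$, which is strictly increasing on $[-1,+\infty)$, yields $t_n^j < t_n^{j+1}$.

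Finally, for convergence to $0$: since $(t_n^j)_j$ is increasing and bounded above by $0$, the limit $t_n^\infty := \lim_{j\to\infty} t_n^j \in [-1,0]$ exists. Passing to the limit in the recursive relation (using continuity of $Y_{n+1}$) gives $t_n^\infty = \Im Y_{n+1}(\Ni t_{n+1}^\infty)$. Combining this with $Y_{n+1}(0) = 0$ and the $9/10$-contraction from Lemma~\ref{L:uniform-contraction-Y},
\[
|t_n^\infty| = |Y_{n+1}(\Ni t_{n+1}^\infty) - Y_{n+1}(0)| \leq \tfrac{9}{10}\, |t_{n+1}^\infty|.
\]
Iterating this inequality $k$ times yields $|t_n^\infty| \leq (9/10)^k |t_{n+k}^\infty| \leq (9/10)^k$, and letting $k\to\infty$ forces $t_n^\infty = 0$. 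The only even mildly subtle point is establishing that the monotonicity of $\Im Y_n(\Ni\cdot)$ is increasing (not decreasing), but this is settled by the explicit asymptotic at $+\infty$; everything else is a routine double induction combined with the contraction estimate.
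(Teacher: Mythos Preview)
Your proof is correct and follows essentially the same approach as the paper: injectivity of $Y_{n+1}$ on $\Ni[-1,+\infty)$ gives order-preservation, Lemma~\ref{L:Y-domain} supplies the base case $t_n^0<t_n^1$, and the $9/10$-contraction against the fixed point $0$ forces convergence. The only cosmetic difference is in the convergence step: the paper iterates the contraction directly along the chain $t_n^j\to t_{n+1}^{j-1}\to\cdots\to t_{n+j}^0=-1$ to obtain the explicit rate $|t_n^j|\leq(9/10)^j$, whereas you first pass to the limit $t_n^\infty$ and then iterate the contraction on the limits; both arguments are equivalent.
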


\begin{proof}
By \refP{P:coordinate-properties}-(i) and the definition of $Y_n$ in \refE{E:Y_n}, for all $n\geq -1$, 
$\Im Y_{n+1}(- i )> -1$.  This implies that $t_n^0 < t_n^1$, for all $n\geq -1$. 
Since each $Y_n$ is injective and maps $ i  [-1, +\infty)$ into itself, $t \mapsto \Im Y_n ( i  t)$ is order preserving, 
for all $n\geq 0$. This implies that for all $n\geq -1$ and $j\geq 0$, $t_n^j < t_n^{j+1}$. 
Also, by \refE{E:uniform-contraction-Y}, $|t_n^j|\leq (9/10)^j$, which implies the latter part of the lemma. 
\end{proof}

Recall the set $\gP_n=\gP_{f_n}$ defined in \refS{SS:coordinates-tower}. 

\begin{lem}\label{L:tail-unit-u_n-bottom}
For each $n\geq -1$, there is a continuous and injective map
\[u_n^0: i [t_n^0, t_n^1] \to \gP \setminus \interior \left(\gU_{n+1}(\Pi_{n+1})\right )\] 
such that 
\begin{itemize}
\item[(i)] $u_n^0(it_n^0)= 1-2  i$ and $u_n^0(i t_n^1)= \gU_{n+1}(1-2 i)$, 
\item[(ii)] $\sup\{\Im u_n^0(is)  \mid  n\geq -1, t_n^0 \leq s \leq t_n^1\} < + \infty$, 
\item[(iii)] $u_n^0(i(t_n^0, t_n^1)) \subset \interior (\gP \setminus \gU_{n+1}(\Pi_{n+1})$. 
\end{itemize}
\end{lem}

\begin{proof}
Recall that $\ex (1-2i)= -4 e^{4\pi}/27$, and $\ex(\gU_{n+1}(1-2i))$ is either equal to $\gF_{n+1}^{-1} (1-2i)$ or 
$s \circ \gF_{n+1}^{-1} (1-2i)$ depending on $\gep_{n+1}$. 
Also, recall that $\gF_{n+1}^{-1}(\Pi_{n+1})$ is a finite union of sectors bounded by analytic curves landing at $0$. 
Moreover, this set contains a punctured neighbourhood of $0$, is compactly contained in $B(0, 4e^{4\pi}/27)$, 
and $\gF_{n+1}^{-1}(1-2i)$ lies on its boundary. 
Let us assume that $\gep_{n+1}=-1$. 
There is a continuous curve $\gamma: [0,1] \to B(0, 4e^{4\pi}/27)$ such that 
$\gamma(0)=-4 e^{4\pi}/27$, $\gamma(1)=\gF_{n+1}^{-1} (1-2i)$, and $\gamma((0,1))$ does not meet the sets 
$\gF_{n+1}^{-1}(\Pi_{n+1})$ and $[0, +\infty)$. We may choose this curve to be uniformly away from $0$. 
One may lift the curve $\gamma$ via $\ex$ to define the desired curve $u_n^0$, which may be re-parametrised 
on $i[t_n^0,t_n^1]$. 

When $\gep_{n+1}=+1$, one only needs to insert the complex conjugation map $s$ in the 
appropriate places in the above argument. 
\end{proof}

Now, by induction on $j\geq 0$, we define the maps $u_n^j$ on $i[t_n^j, t_n^{j+1}]$, for all $n\geq -1$. 
Assume that for some $j \geq 0$ and all $n\geq -1$, $u_n^j$ is defined on $i[t_n^{j}, t_n^{j+1}]$. 
For all $n\geq -1$, we define $u_n^{j+1}$ on $i[t_n^{j+1}, t_n^{j+2}]$ as 
\begin{equation}\label{E:commutator-auxiliary-part-u_n}
u_n^{j+1}= \gU_{n+1} \circ u_{n+1}^j \circ Y_{n+1}^{-1}.
\end{equation}
Note that, by \refL{L:tail-unit-u_n-bottom}-(i),  
\[u_n^1(i t_n^1)=\gU_{n+1} \circ u_{n+1}^0 \circ Y_{n+1}^{-1}(i t_n^1)
=\gU_{n+1} \circ u_{n+1}^0 (it_{n+1}^0)= \gU_{n+1} (1-2i)= u_n^0(it_n^1).\]
In other words, $u_n^0$ and $u_n^1$ match at the intersection of their respective domains of definitions. 
Repeating the above argument inductively, one may see that for all $n\geq -1$ and $j\geq 0$, 
$u_n^{j+1}(i t_n^{j+1})=u_n^j(it_n^{j+1})$. 
Thus we may define $u_n$ on $i[-1, 0)$ as 
\[u_n(it)=u_n^j(it), \; \tfor t\in [t_n^j, t_n^{j+1}].\]
We set $u_n(0)=+1$, for each $n\geq -1$. 

\begin{lem}\label{L:u_n-bottom-injective-near-identity}
For every $n\geq -1$, $u_n : i [-1, 0] \to \gP$ is continuous. 
Moreover, there is $C_5>0$ such that for all $n\geq -1$ and all $t\in [-1,0]$ we have $|u_n(i t)- (1+i t)| \leq C_5$.
\end{lem}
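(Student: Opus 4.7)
The plan is a nested induction on $j\geq 1$ establishing simultaneously, for all $n\geq-1$, that $u_n|_{\Ni[-1,t_n^j]}$ is continuous, injective, and satisfies $|u_n(\Ni t)-\Ni t|\leq C_5$ with $C_5$ independent of $n$ and $j$. The base case $j=1$ uses the curve supplied by Lemma~\ref{L:tail-unit-u_n-bottom}, which may be chosen injective (e.g.\ by following a vertical segment from $1-2\Ni$ to $\partial\ex^{-1}(\gO_n)$ and then a simple boundary arc to $\gc_{n+1}(1-2\Ni)$). The uniform estimate on this piece follows because $|\gc_{n+1}(1-2\Ni)-(1-2\Ni)|$ is bounded uniformly in $n$ (by Proposition~\ref{P:estimates-L_n}(ii) applied at $1-2\Ni$, whose imaginary part is bounded) and because the base curve may be confined to a universally bounded subset of $\gP$.

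For the inductive step, assume the three properties hold on $\Ni[-1,t_{n'}^j]$ for every $n'\geq-1$. On the new interval $\Ni[t_n^j,t_n^{j+1}]$ the formula
\[ u_n(\Ni t)=\gc_{n+1}\circ u_{n+1}\circ Y_{n+1}^{-1}(\Ni t) \]
applies, and $Y_{n+1}^{-1}$ maps this interval bijectively onto $\Ni[t_{n+1}^{j-1},t_{n+1}^j]\subset\Ni[-1,t_{n+1}^j]$ (using $t_n^j=\Im Y_{n+1}(\Ni t_{n+1}^{j-1})$ and monotonicity from Lemma~\ref{L:Y-domain}). Continuity and injectivity of $u_n$ on the new piece then follow from the corresponding properties of $u_{n+1}$ (inductive hypothesis) combined with injectivity of the (anti-)holomorphic map $\gc_{n+1}$ and of $Y_{n+1}^{-1}$; matching at the common endpoint $\Ni t_n^j$ is automatic since the preceding piece was defined by the same formula. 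The uniform bound on the new piece is obtained by applying Proposition~\ref{P:gc-vs-Y-value} with $U=u_{n+1}$, which preserves the constant $C_5$.

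Disjointness of the new piece from all earlier pieces is where the avoidance properties (iii)--(iv) of Lemma~\ref{L:tail-unit-u_n-bottom} enter: after projection by $\ex$ the base piece at level $n$ lies outside $\interior(\gO_n)$ (or its conjugate), whereas every later piece of $u_n$ factors through $\gF_{n+1}^{-1}$ and hence projects into $\gO_n$, so the base piece meets the rest of the curve only at the common endpoint $\gc_{n+1}(1-2\Ni)$. Applying the univalent $\gc_{n+1}$ to the inductive injectivity of $u_{n+1}|_{\Ni[-1,t_{n+1}^j]}$ then propagates injectivity to $u_n|_{\Ni[-1,t_n^{j+1}]}$.

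Continuity at $t=0$ is treated separately. For $t\in\Ni[t_n^j,t_n^{j+1}]$, the unfolded formula
\[ u_n(\Ni t)=\gc_{n+1}\circ\cdots\circ\gc_{n+j}\bigl(u_{n+j}(\Ni s)\bigr),\qquad s\in[-1,t_{n+j}^1], \]
together with Proposition~\ref{P:uniform-contraction} shows that the long composition contracts the hyperbolic diameter of $\gP$ by $\gd_2^{\,j-1}$ while fixing $+1$; since $j\to\infty$ as $t\to 0^-$ (Lemma~\ref{L:t_n^j-monotone}), one concludes $u_n(\Ni t)\to+1=u_n(0)$. The principal obstacle is the disjointness step, which hinges on the combinatorial fact that the iterated $\gc$-images nest strictly inside $\ex^{-1}(\gO_n)$ while the base piece lies outside it. Once this nesting is verified, continuity and the $C_5$-bound drop out of the induction.
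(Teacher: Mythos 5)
The paper's proof of this lemma is unusually terse: it establishes piecewise continuity and matching at the gluing points, and then uses the uniform Poincaré contraction of the $\gc_m$ (all fixing $+1$) to get $u_n(\Ni t_n^j)\to +1$, which gives continuity at $0$; it does not explicitly address injectivity or the uniform bound $|u_n(\Ni t)-\Ni t|\le C_5$. Your proposal covers the same continuity argument but also fills in the two missing pieces, and so is a more complete argument, not merely a rephrasing. That said, two points deserve scrutiny.

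First, your invocation of Proposition~\ref{P:gc-vs-Y-value} ``which preserves the constant~$C_5$'' is leaning on an apparent typo: as printed, the proposition introduces $C'$ in the first sentence and then never uses it (the conclusion has $\le C$), which cannot be the intended statement. If the conclusion is really $\le C'$, then iterating the proposition does \emph{not} preserve the constant and your induction hypothesis no longer closes. The robust way to get uniform boundedness of $u_n$ on $\Ni[-1,0]$ is what you already do for continuity at $0$: once $\gc_{n+j}$ is applied to the base piece, the image lies in $\gc_{n+j}(\gP)$, which by Lemma~\ref{L:well-inside-lifts} has uniformly bounded Poincar\'e distance from $+1$; the remaining $j-1$ maps each contract this distance by $\gd_2$, so all pieces stay at uniformly bounded Euclidean distance from $+1$, hence from $\Ni t$ for $t\in[-1,0]$. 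Formulate the bound that way rather than through Proposition~\ref{P:gc-vs-Y-value}.

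Second, the disjointness step is the genuinely new content over the paper's proof, and your sentence ``every later piece of $u_n$ factors through $\gF_{n+1}^{-1}$ and hence projects into $\gO_n$'' is not actually supplied by the lemmas you cite. Lemma~\ref{L:tail-unit-u_n-bottom}-(iii)--(iv) gives that the base piece projects outside $\interior(\gO_n)$, and Lemma~\ref{L:u_n-asymptotics}-(ii) gives that the $t\ge 0$ part projects into $\gO_n$, but nothing quoted asserts that the auxiliary part $u_n(\Ni(t_n^1,0))$ projects into $\interior(\gO_n)$. The needed input is a containment such as $\gF_{n+1}^{-1}(\gP_{n+1}')\subset\gO_n$ (with boundary meeting $\partial\gO_n$ exactly at $\gF_{n+1}^{-1}(1-2\Ni)$), or equivalently that $\gc_{n+1}(\gP_{n+1}')\subset\ex^{-1}(\gO_n)$ with equality on the boundary only at $\gc_{n+1}(1-2\Ni)$. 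This is plausible given how $\gO_n$ and $\gc_{n+1}$ are built, but it is a substantive geometric fact that should be proved or explicitly cited rather than asserted via ``hence.'' Also note your wording ``contracts the hyperbolic diameter of $\gP$'' is imprecise since $\gP$ has infinite Poincar\'e diameter; what gets contracted is the Poincar\'e diameter of $\gc_{n+j}(\text{base piece})$, which is finite and uniformly bounded.
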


\begin{proof}
Fix an arbitrary $n\geq -1$. By \refL{L:tail-unit-u_n-bottom} and \refE{E:commutator-auxiliary-part-u_n}, 
the restriction of $u_n$ to each closed interval $i[t_n^j, t_n^{j+1}]$ is continuous, for $j\geq 0$. 
Hence, $u_n$ is continuous on $i[-1, 0)$.

Fix an arbitrary $n\geq -1$ and an arbitrary $j\geq 1$. 
By \refL{L:tail-unit-u_n-bottom}-(ii), the Euclidean diameter of the curve $u_{n+j}(i[t_{n+j}^0, t_{n+j}^1])$ 
is uniformly bounded from above. 
By the compactness of $IS$, it follows that the Euclidean diameter of the curve 
$\gU_{n+j}(u_{n+j}(i[t_{n+j}^0, t_{n+j}^1]))$ is uniformly bounded from above, independent of $n+j$. 
On the other hand, this curve also lies in $\gU_{n+j}(\gP)$, which is contained well inside $\gP$, 
by \refL{L:well-inside-lifts}.
These imply that the hyperbolic diameter of the curve $\gU_{n+j}(u_{n+j}(i[t_{n+j}^0, t_{n+j}^1]))$ in 
$\interior(\gP)$ is uniformly bounded from above, independent of $n+j$. 
Then, we employ \refP{P:uniform-contraction} 
to conclude that there is a constant $C$, independent of $n$ and $j$, such that the hyperbolic diameter of the curve 
$\gU_{n+1} \circ \dots \circ \gU_{n+j}(u_{n+j}(i[t_{n+j}^0, t_{n+j}^1]))$ is bounded from above by 
$C (\gd_2)^{j-1}$. 
Similarly, by making $C$ larger if necessary, we also conclude that the hyperbolic distance from 
$\gU_{n+1} \circ \dots \circ \gU_{n+j}(u_{n+j}(it_{n+j}^1))$ to $+1$ is bounded from above by $C (\gd_2)^{j-1}$. 

By \refE{E:commutator-auxiliary-part-u_n}, 
\begin{equation*}
\begin{gathered}
\gU_{n+1} \circ \dots \circ \gU_{n+j}(u_{n+j}(i[-1, t_{n+j}^1])) = u_n(i[t_n^j, t_n^{j+1}]), \\
\gU_{n+1} \circ \dots \circ \gU_{n+j}(u_{n+j}(it_{n+j}^1)) = u_n(i t_n^{j+1}).
\end{gathered}
\end{equation*}
Thus, by the above paragraph, the 
hyperbolic diameter of $u_n(i[t_n^j, t_n^{j+1}])$ is bounded from above by $C (\gd_2)^{j-1}$, and the 
hyperbolic distance from $u_n(i t_n^{j+1})$ to $+1$ is bounded from above by $C (\gd_2)^{j-1}$. 
These imply that $u_n$ is continuous at $0$. Moreover, the hyperbolic
diameter of $u_n(i[t_n^1, 0])$ is bounded from above by $C \sum_{j=1}^\infty (\gd_2)^{j-1}= C/(1-\gd_2)$. 
In particular, combining with \refL{L:tail-unit-u_n-bottom}-(ii), we conclude that the Euclidean diameter of the curve 
$u_n(i[-1, 0])$ is uniformly bounded from above, independent of $n$. This implies the latter part of the lemma. 
\end{proof}

\begin{propo}\label{P:injective-u_n-2}
For every $n\geq -1$, $u_n: i [-1, +\infty) \to \Pi$ is injective. 
\end{propo}

\begin{proof}
Fix an arbitrary $n\geq -1$. We already proved in \refP{P:injective-u_n} that  $u_n$ is injective on $i [0, +\infty)$. 

Let us fix arbitrary points $y_n < x_n$ in $[-1, +\infty)$ with $y_n < 0$. 
We aim to show that $u_n(iy_n) \neq u_n(i x_n)$.
First assume that there is $j\geq 0$ such that both $x_n$ and $y_n$ belong to the same interval $[t_n^j, t_n^{j+1}]$. 
Then, $u_n$ on $i[t_n^j, t_n^{j+1}]$ is given by 
\[u_n= \gU_{n+1} \circ \dots   \circ   \gU_{n+j} \circ u_{n+j} \circ Y_{n+j}^{-1}  \circ  \dots \circ  Y_{n+1}^{-1},\]
while 
\[Y_{n+j}^{-1}  \circ  \dots \circ  Y_{n+1}^{-1} (i[t_n^j, t_n^{j+1}])= i[t_{n+j}^0, t_{n+j}^1].\]
However, each $\gU_l$ is injective on $\Pi$, $Y_l$ is injective on its domain, and by \refL{L:tail-unit-u_n-bottom}, 
$u_n$ is injective on $i[t_{n+j}^0, t_{n+j}^1]$.  
In particular, $u_n(iy_n) \neq u_n(i x_n)$. 

Now assume that both $x_n$ and $y_n$ do not belong to one interval $[t_n^j, t_n^{j+1}]$. 
By \refL{L:t_n^j-monotone}, there is $j\geq 0$ such that $t_n^j \leq y_n < t_n^{j+1} < x_n$. 
Let $y_{l+1}= \Im Y_{l+1}^{-1}( i  y_l)$ and $x_{l+1}= \Im Y_{l+1}^{-1}( i  x_l)$, for $n \leq l \leq n+j-1$. 
By the choice of $j$, we have $-1 \leq y_{n+j} < t_{n+j}^1 < x_{n+j}$. 
Then, by \refL{L:well-inside-lifts}, 
\[u_{n+j}( i  y_{n+j})\in \gP \setminus \gU_{n+j+1}(\gP_{n+j+1}),\]
while 
\[u_{n+j}(i x_{n+j}) = \gU_{n+j+1} \circ u_{n+j+1} \circ Y_{n+j+1}^{-1}(i x_{n+j}) 
\subset  \gU_{n+j+1} (\gP) \subset \gU_{n+j+1}(\gP_{n+j+1}).\] 
In particular, $u_{n+j}( i  x_{n+j}) \neq u_{n+j}( i  y_{n+j})$. 
Now, inductively one uses \refE{E:commutator-auxiliary-part-u_n}, and the injectivity of 
$Y_l$ and $\gU_l$, to conclude that $u_n( i  x_n)\neq u_n( i  y_n)$. 
\end{proof}

The particular choice of the curve $u_n$ on $[-1, 0]i$ and its parametrisation does not play any role in the sequel. 
But the feature we shall use is the equivariant property 
\begin{equation}\label{E:commutator-auxiliary-part-u_n-complete}
\gU_{n+1} \circ u_{n+1} (it)= u_n \circ Y_{n+1}(it), \; t\in [-1,0].
\end{equation}
\section{Marked dynamical partitions for the post-critical set}\label{S:modified nest}
In this section we build a nest of dynamical partitions for the post-critical set. 
The elements of the partition are Jordan domains, whose boundaries are equipped with a parameterisation (marking). 
The markings match where the boundaries of pieces meet. 
This is analogous to the Yoccoz puzzle pieces in polynomial dynamics. 
This nest of partitions is similar to the nest of partitions $\Omega_n^j$ introduced in \cite{Che13,Che19,AC18}. 
However, the nest introduced here has simpler combinatorial and geometric features, and enjoys equivariant 
properties with respect to the renormalisation. 

\subsection{Marked curves \texorpdfstring{$w_n^{\pm}$}{w-n} and \texorpdfstring{$v_n^{\pm}$}{v-n}}
\label{SS:w_n-and-v_n}
Recall the curves $u_n: i[-1, +\infty) \to \gP$, for $n\geq -1$, defined in Sections \ref{SS:u_n-inductive-definition} and
\ref{SS:auxiliary-part-u_n}, as well as the sets $\gP_n$ and the maps $\gU_n: \gP_n \to \D{C}$ defined 
in \refS{SS:coordinates-tower}. 

For $\gU_n$, $u_n(i[-1, +\infty))$ plays the role of $i[-1, +\infty)$ for $Y_n$. 
Here we define two other curves for $\gU_n$ which play the analogous role of the lines $1/\ga_n+i[-1, +\infty)$ 
and $1/\ga_n-1 + i[-1, +\infty)$ for $Y_n$ (see \eqref{E:Y_n-comm-1} and \eqref{E:Y_n-comm-2}). 
Due to the presence of a critical point of $\gU_n$, as opposed to the 
injectivity of $Y_n$, we need to consider a pair of curves for the role of $1/\ga_n-1 + i[-1, +\infty)$. 
See \refF{F:w_n-v_n}. 
For the sake of simplifying the arguments, as in $u_n$, we parametrise these curves on the corresponding 
lines $1/\ga_n+i[-1, +\infty)$ and $1/\ga_n-1+i[-1, +\infty)$. 

\begin{propo}\label{P:w_n-marking}
For every $n\geq 0$, there are continuous and injective maps 
\[w_n^+: 1/\ga_n+ i  [-1, +\infty) \to \gP_n, \qquad w_n^-: 1/\ga_n+ i  [-1, +\infty) \to \gP_n\] 
such that 
\begin{itemize}
\item[(i)] for all $t \in [-1, +\infty)$ we have 
\[\gU_n \circ w_n^+(1/\ga_n+  i  t) = \gU_n \circ w_n^-(1/\ga_n+  i  t) = \gU_n \circ u_n ( i  t)-\gep_n,\]
\item[(ii)] on $1/\ga_n+ i  [0, +\infty)$, $w_n^+=w_n^-$, 
\item[(iii)]$w_n^+(1/\ga_n+i[-1, 0)) \cap w_n^-(1/\ga_n+i[-1, 0))=\emptyset$, 
\item[(iv)] $\gU_n$ has a critical point at $w_n^+(1/\ga_n)=w_n^-(1/\ga_n)$.  
\end{itemize}
\end{propo}

\begin{proof}
From \refS{SS:renormalisation-def}, recall the sets $A_n= A_{f_n}$, $B_n= B_{f_n}$, and $S_n= S_{f_n}$, 
as well as the integer $k_n=k_{f_n}$, associated to $h=f_n$. 
Also, recall from \refS{S:parametrised curve} that $u_n(i[-1,+\infty))$ is contained in 
$\gP=\gF_n(A_n\cup B_n) \subset \gP_n$. 
Then, $\gF_n^{-1}\circ u_n (i[-1, +\infty))$ is contained in $A_n \cup B_n$.  

Recall that $\gF_n^{-1}: \gP_n \to \D{C}$ covers $A_n \cup B_n$ several times, while 
$\gF_n^{-1}: \gP \to A_n \cup B_n$ is univalent. 
The restriction $\gF_n^{-1}:\gF_n(S_n) +k_n \to A_n \cup B_n$ has a specific covering structure; it covers $B_n$ 
in a one-to-one fashion, and covers $A_n$ by a two-to-one proper branched covering map. 
The branch point is mapped to $-4/27=\gF_n^{-1}(+1)$; the critical value of $f_n$. 
See the discussion in \refS{SS:renormalisation-def}. 
In particular, there is a unique continuous curve $w_n^+: 1/\ga_n+ i [0,+\infty) \to \gF_n (S_n)+ k_n$, such that 
\begin{equation}\label{E:P:w_n-marking-1}
\gF_n^{-1} (w_n^+(1/\ga_n+ i  t)) = \gF_n^{-1}(u_n( i  t)). 
\end{equation}
This defines $w_n^+$ on $1/\ga_n+ i [0,+\infty)$. We let $w_n^-=w_n^+$ on $1/\ga_n+ i [0,+\infty)$. 

There are two ways to extend $w_n^+$ on $1/\ga_n + i[-1, 0)$ so that the above equation holds. 
These come from the double covering structure of $\gF_n^{-1}$ from $\gF_n(A_n^{-k_n})+k_n$ onto $A_n$. 
Let us denote these maps by $w_n^+$ and $w_n^-$. 
The three curves $w_n^+(1/\ga_n+i[-1, 0])$, $w_n^-(1/\ga_n+i[-1, 0])$, and $w_n^+(1/\ga_n + i [0, +\infty))$ land at 
$w_n^+(1/\ga_n)$. There is a cyclic order on these curves consistent with the positive orientation on an infinitesimal 
circle at $w_n^+(1/\ga_n)$. 
We relabel these curves so that 
$w_n^+(1/\ga_n + i [0, +\infty)) < w_n^+(1/\ga_n + i [-1, 0]) < w_n^-(1/\ga_n + i [-1, 0])$. 
See \refF{F:w_n-v_n}. 
Evidently, $\gU_n$ has a critical point at $w_n^+(1/\ga_n)$, 
and $w_n^+(1/\ga_n+ i [-1,0)) \cap w_n^-(1/\ga_n+ i [-1, 0))= \emptyset$.  

By the above argument, the images of the curves $w_n^{\pm}$ are contained in $\gF_n(S_n) +k_n$, that is, 
for all $s\in \{+,-\}$, 
\begin{equation}\label{E:P:w_n-marking-2}
w_n^s: 1/\ga_n + i[-1, +\infty) \to \gF_n(S_n)+k_n.
\end{equation}
Recall that $\gU_n=\ex^{-1} \circ s\circ \gF_n^{-1}$ or $\gU_n =\ex^{-1} \circ \gF_n^{-1}$, depending on the sign 
$\gep_n$. 
Therefore, by \refE{E:P:w_n-marking-1} and the continuity of $u_n$, $w_n^+$, and $\gU_n$, there is an integer 
$i_n$ such that for $s \in \{+,-\}$ and all $t \geq -1$, $\gU_n \circ w_n^s(1/\ga_n+  i  t) = \gU_n \circ u_n ( i  t)+i_n$. 
On the other hand, the region bounded by the curves $u_n$ and $w_n^+$ near $+i\infty$ is mapped by 
$\gF_n^{-1}$ to a slit neighbourhood of $0$ in a one-to-one fashion. It follows that $i_n=-\gep_n$. 
\end{proof}

\begin{propo}\label{P:near-translation-w_n}
There exists a constant $C_6$ such that for every $n\geq 0$, and every $t\geq -1$, 
\[|w_n^+(it+1/\ga_n)- (u_n( i  t)+1/\ga_n)| \leq C_6.\] 
Moreover, for every $\gep>0$ there is $C_\gep$ such that for every $n\geq 0$ and every $t\geq C_\gep$, we have 
\[|w_n^+(1/\ga_n+ i  t)-(u_n( i  t)+ 1/\ga_n)| \leq \gep.\] 
\end{propo}

\begin{proof}
Consider $\mathcal{E}_n=\gF_n  \circ \gF_n^{-1} : \gF_n(S_n)+k_n \to \gP$, (compare with \refE{E:renorm-def}). 
By \refP{P:Ino-Shi1}-(e), $\mathcal{E}_n(w+1)= \mathcal{E}_n(w)+1$ whenever both sides are defined. 
Therefore, $\mathcal{E}_n$ induces a holomorphic map, say $\tilde{\mathcal{E}}_n$, from 
$\gF_n(S_n)/\D{Z} \subset \D{C}/\D{Z}$ onto $\{ w\in \D{C}/\D{Z} \mid \Im w \geq -2\}$. 
Moreover, by \refE{E:P:w_n-marking-1}, $\tilde{\mathcal{E}}_n$ maps the curve $w_n^+/\D{Z}$ to the 
curve $u_n/\D{Z}$. 

The image of $\tilde{\mathcal{E}}_n$ covers the region above the circle $\Im w =+2$ in a univalent fashion. 
Also, we have $\lim_{\Im w \to +\infty} \Im \tilde{\mathcal{E}}_n(w)=+\infty$. 
We may apply the Koebe distortion theorem to $\tilde{\mathcal{E}}_n^{-1}$ on the annulus 
$\{w \in \D{C}/\D{Z} \mid \Im w > 2\}$. 
It implies that there is a constant $c_n$ such that $|u_n(it)+c_n - w_n^+(1/\ga_n+it)|$ converges to $0$, uniformly 
independent of $n$, as $t \to +\infty$. 
Moreover, $|u_n(it)+c_n -w_n^+(1/\ga_n+it)|$ is uniformly bounded from above when $\Im u_n(it)\geq 3$. 
By \refE{E:gU'-asymptote}, $c_n=1/\ga_n$. 

By \refP{P:near-identity-u_n}, if $t\geq C_2/(1-\gd_2)+3$, $\Im u_n(it)\geq 3$. 
On the other hand, for $-1 \leq t \leq C_2/(1-\gd_2)+3$, by the compactness of $\QIS$ and the continuous 
dependence of $\gF_h$ on $h\in \QIS$, $|u_n(it)+1/\ga_n - w_n^+(1/\ga_n+it)|$ is uniformly bounded from above.
\end{proof}

For $n\geq 0$, we define the maps 
\[v_n^+: 1/\ga_n -1 + i[-1, +\infty) \to \Pi_n, \quad v_n^-: 1/\ga_n -1 + i[-1, +\infty) \to \Pi_n\] 
according to 
\begin{equation}\label{E:v_n-def}
v_n^+(1/\ga_n-1+it)= w_n^+(1/\ga_n+it) -1, \quad v_n^-(1/\ga_n-1+it)= w_n^-(1/\ga_n+it) -1.
\end{equation}

See \refF{F:lifting-partial-uniformisations} for an illustration of the following proposition. 

\begin{propo}\label{P:v_n-marking}
For every $n\geq 0$ and every $t \geq -1$ the following hold. 
\begin{itemize}
\item[(i)] If $\gep_{n+1}=-1$, then
\[\gU_{n+1} \circ v_{n+1}^+(1/\ga_{n+1}-1+it) + a_n+\gep_{n+1}  = w_n^+ (Y_{n+1}(it) + 1/\ga_n),\]
\[\gU_{n+1} \circ v_{n+1}^-(1/\ga_{n+1}-1+it) + a_n+\gep_{n+1}  = w_n^-(Y_{n+1}(it) + 1/\ga_n).\]
\item[(ii)] If $\gep_{n+1}=+1$, then 
\[\gU_{n+1} \circ v_{n+1}^+(1/\ga_{n+1}-1+it) +a_n+\gep_{n+1}  = w_n^- (Y_{n+1}(it) + 1/\ga_n),\]
\[\gU_{n+1} \circ v_{n+1}^-(1/\ga_{n+1}-1+it) +a_n+\gep_{n+1}  = w_n^+ (Y_{n+1}(it) + 1/\ga_n).\]
\end{itemize}
\end{propo}

\begin{proof}
Fix an arbitrary $n\geq 0$ and $s\in \{+, -\}$. Let us first assume that $\gep_{n+1}=-1$ so that $f_{n+1}=\rr(f_{n})$.  

By \refE{E:P:w_n-marking-2}, for all $t'\geq -1$, $w_{n}^s(1/\ga_{n}+ it')-k_{n} \in \gF_{n}(S_{n})$. 
By \refP{P:Ino-Shi1}-(e), and \refE{E:P:w_n-marking-1}, 
\[f_{n}^{k_{n}} \left( \gF_{n}^{-1} \left (w_{n}^s(1/\ga_{n}+it')-k_{n} \right) \right)
=\gF_{n}^{-1}(w_{n}^s(1/\ga_{n}+it'))=\gF_{n}^{-1}(u_{n}(it')).\]
Hence, by the definition of renormalisation $\rr(f_{n})=f_{n+1}$, see \refE{E:renorm-def}, the above relation 
implies that 
\begin{equation}\label{E:P:v_n-marking-1}
\begin{aligned}
f_{n+1} \left(\ex \left (w_{n}^s(1/\ga_{n}+it')\right) \right)
& =f_{n+1} \left(\ex \left (w_{n}^s(1/\ga_{n}+it')-k_{n} \right) \right) \\
&=\ex (u_{n}(it')).
\end{aligned}
\end{equation}
Let $it'= Y_{n+1}(it)$. The right hand side of the above equation becomes
\begin{align*}
\ex (u_{n}(it'))  =\ex (u_{n}(Y_{n+1}(it))) 
&=\ex \circ \gU_{n+1}  \circ u_{n+1}(it)  && (\text{\refP{P:comm-u-Y-upsilon}}) \\
&=\gF_{n+1}^{-1} (u_{n+1}(it))  &&  (\text {\refE{E:gU-defn}})  \\
&=\gF_{n+1}^{-1}(w_{n+1}^s (1/\ga_{n+1}+it)) && (\text{\refE{E:P:w_n-marking-1}}) \\ 
&= f_{n+1} \circ \gF_{n+1}^{-1} (w_{n+1}^s (1/\ga_{n+1}+it) -1) && (\text{\refP{P:Ino-Shi1}-e})\\
&=f_{n+1} \circ \gF_{n+1}^{-1} (v_{n+1}^s (1/\ga_{n+1}-1+it)). && (\text{\refE{E:v_n-def}}) 
\end{align*}
Combining the above equations, we conclude that 
\[f_{n+1} \left(\ex \left (w_{n}^s(1/\ga_{n}+Y_{n+1}(it)\right) \right)
=f_{n+1} \left( \gF_{n+1}^{-1}(v_{n+1}^s (1/\ga_{n+1}-1+it))\right).\] 
The above equation implies that 
\begin{align*}
\ex \left(w_{n}^s (1/\ga_{n}+Y_{n+1}(it))\right) & =\gF_{n+1}^{-1} (v_{n+1}^s (1/\ga_{n+1}-1+it)) && \\
&= \ex \circ \gU_{n+1} (v_{n+1}^s (1/\ga_{n+1}-1+it)) && (\text{\refE{E:gU-defn}})
\end{align*} 
Then, for any $t\geq -1$, there must be an integer $l_t$, such that 
\[w_{n}^s (1/\ga_{n}+Y_{n+1}(it))=\gU_{n+1} (v_{n+1}^s (1/\ga_{n+1}-1+it)) + l_t.\]
However, since $\gU_{n+1} (v_{n+1}^s (1/\ga_{n+1}-1+it))$ and $w_{n}^s (1/\ga_{n}+Y_{n+1}(it))$ 
depend continuously on $t$, $l_t$ must be independent of $t$. In order to identify the value of $l_t$ we look 
at the limiting behaviour of the relation as $t \to +\infty$. 

By \refP{P:near-translation-w_n}, 
\begin{align*}
\lim_{t\to +\infty} &  \left (\Re \left(v_{n+1}^s(1/\ga_{n+1}-1+it) - u_{n+1}(it)\right ) \right) \\
& =\lim_{t\to +\infty} \left (\Re (w_{n+1}^s (1/\ga_{n+1}+it) - u_{n+1}(it))\right) -1 
= 1/\ga_{n+1}-1.
\end{align*}
Applying $\gU_{n+1}$ and using \refE{E:gU'-asymptote}, 
\[\lim_{t\to +\infty} \left(\Re \left (\gU_{n+1}(v_{n+1}^s (1/\ga_{n+1}-1+it)) - \gU_{n+1}(u_{n+1}(it))\right) \right) 
= \ga_{n+1} (1/\ga_{n+1}-1) = 1-\ga_{n+1}.\]
On the other hand, by Propositions \ref{P:comm-u-Y-upsilon} and \ref{P:near-translation-w_n}, we have 
\begin{align*}
\lim_{t \to +\infty} \Re & (w_{n}^s (1/\ga_n+Y_{n+1}(it)) - \gU_{n+1} (u_{n+1}(it)) \\
&= \lim_{t \to +\infty} \Re (w_{n}^s (1/\ga_n+Y_{n+1}(it))- u_{n}(Y_{n+1}(it)))
=1/\ga_{n}.
\end{align*}
Hence, we must have $1-\ga_{n+1} +l_t= 1/\ga_{n}$, which by \refE{E:rotations-relations} and $\gep_{n+1}=-1$, 
implies that $l_t=a_{n} - 1= a_{n} +\gep_{n+1}$. 
This completes the proof of the desired relation in the proposition when $\gep_{n+1}=-1$. 

Now assume that $\gep_{n+1}=+1$. 
The argument is similar in this case, so we only look at the relation for $v_n^+$, and emphasis the differences 
with the above argument. 
As in the above argument, we have 
\[f_{n}^{k_{n}} \left( \gF_{n}^{-1} \left (w_{n}^-(1/\ga_{n}+it')-k_{n} \right) \right)= \gF_{n}^{-1}(u_{n}(it')).\]
When $\gep_{n+1}=+1$, $\rr(f_{n})=s \circ f_{n+1} \circ s$. Therefore, 
\begin{equation*}
\begin{aligned}
s\circ f_{n+1}  \circ s \left(\ex \left (w_{n}^-(1/\ga_{n}+it')\right) \right)
&= \ex (u_{n}(it')).
\end{aligned}
\end{equation*}
Let $it'= Y_{n+1}(it)$. The right hand side of the above equation becomes
\begin{align*}
\ex (u_{n}(it'))  =\ex (u_{n}(Y_{n+1}(it)))
&=\ex \circ \gU_{n+1}  \circ u_{n+1}(it)  && (\text{\refP{P:comm-u-Y-upsilon}}) \\
&=s \circ \gF_{n+1}^{-1} (u_{n+1}(it))  &&  (\text {\refE{E:gU-defn}})  \\
&=s \circ  f_{n+1} \circ \gF_{n+1}^{-1} (v_{n+1}^- (1/\ga_{n+1}-1+it)). && (\text{\refE{E:v_n-def}}) 
\end{align*}

Combining the above equations, we conclude that 
\[s \circ f_{n+1} \circ s \left(\ex \left (w_{n}^-(1/\ga_{n}+Y_{n+1}(it)\right) \right)
=s \circ f_{n+1} \left( \gF_{n+1}^{-1}(v_{n+1}^- (1/\ga_{n+1}-1+it))\right).\] 
The above equation implies that 
\begin{align*}
s \circ \ex \left(w_{n}^- (1/\ga_{n}+Y_{n+1}(it))\right) & =\gF_{n+1}^{-1} (v_{n+1}^+ (1/\ga_{n+1}-1+it)) && \\
&=s \circ  \ex \circ \gU_{n+1} (v_{n+1}^+ (1/\ga_{n+1}-1+it)) && (\text{\refE{E:gU-defn}}).
\end{align*} 
Note that the change from $v_{n+1}^-$ to $v_{n+1}^+$ in the above relation is due to the orientation reversing 
effect of $s\circ \ex$ on the left-hand side as opposed to the orientation preserving effect of 
$s \circ \ex \circ \gU_{n+1}$ on the right hand side of the equation. 

As in the previous case, there is an integer $l_t$, independent of $t$, such that  
\[w_{n}^+ (1/\ga_{n}+Y_{n+1}(it))=\gU_{n+1} (v_{n+1}^- (1/\ga_{n+1}-1+it)) + l_t.\]
Since $\gU_{n+1}'$ is asymptotically equal to $-\ga_{n+1}$ near $+i \infty$, in this case we obtain 
\[\lim_{t\to +\infty} \left(\Re \left (\gU_{n+1}(v_{n+1}^- (1/\ga_{n+1}-1+it)) - \gU_{n+1}(u_{n+1}(it))\right) \right) 
= -\ga_{n+1} (1/\ga_{n+1}-1) = \ga_{n+1}-1.\]
Hence, $\ga_{n+1}-1 + l_t= 1/\ga_{n}$, which by \refE{E:rotations-relations} and $\gep_{n+1}=+1$, implies that 
$l_t= a_{n} +\gep_{n+1}$. 
\end{proof}

Recall the numbers $t_n^1= \Im Y_{n+1}(-i) \in (-1, 0)$, for $n\geq -1$. 

\begin{propo}\label{P:disjoint-curves-u_n-v_n-w_n}
For all $n\geq 0$ and $s\in \{+,-\}$, we have 
\begin{equation*}
\begin{gathered}
w_n^+(1/\ga_n+ i  [t_n^1, +\infty)) \cap (u_n( i [t_n^1, +\infty))+ \D{Z})=\emptyset, \\
v_n^s(1/\ga_n-1 +  i  [t_n^1, +\infty)) \cap (u_n( i  [t_n^1, +\infty)) + \D{Z}) =\emptyset, \\ 
v_n^s (1/\ga_n-1 +  i  [-1, +\infty)) \cap w_n^+(1/\ga_n+  i  [-1, +\infty)) =\emptyset.
\end{gathered}
\end{equation*}
\end{propo}

\begin{proof}
First assume that $\gep_{n+1}=-1$ so that $\rr(f_n)=f_{n+1}$. 
Recall from \refS{SS:auxiliary-part-u_n} that $u_{n+1}(i(-1,+\infty))$ lies in $\interior (\gP)$. 
Moreover, by \refP{P:comm-u-Y-upsilon} and \refE{E:commutator-auxiliary-part-u_n-complete}, we have 
\[\ex \circ u_n (i(t_n^1, +\infty))= \gF_{n+1}^{-1} (u_{n+1}(i(-1, +\infty)).\] 
Thus, $\ex \circ u_n (i(t_n^1, +\infty))$ is contained in $\interior(A_{n+1} \cup B_{n+1}) = \interior(\gF_{n+1}^{-1}(\gP))$. 
It follows from \refE{E:P:v_n-marking-1} that 
$\ex \circ w_n^{\pm}(1/\ga_n+i(t_n^1, +\infty)) \subset \interior(A_{n+1}^{-1} \cup B_{n+1} ^{-1})$. 
As $\interior(A_{n+1} \cup B_{n+1}) \cap \interior(A_{n+1}^{-1} \cup B_{n+1}^{-1}) = \emptyset$,
\[\ex \circ u_n (i(t_n^1, +\infty)) \cap  \ex \circ w_n^{\pm}(1/\ga_n+i(t_n^1,+\infty)) =\emptyset.\] 
At the end of the interval $t_n^1$, by \refL{L:tail-unit-u_n-bottom}-(i) and \refE{E:P:v_n-marking-1}, 
\[\ex \circ u_n (it_n^1)=\gF_{n+1}^{-1} (u_{n+1}(-i))=\gF_{n+1}^{-1} (1-2i) \neq \gF_{n+1}^{-1} (-2i) \ni
\ex \circ w_n^{\pm} (1/\ga_n+ it_n^1).\]
Combining the above equations, we have 
\[\left( \ex \circ u_n (i[t_n^1, +\infty) \right) \cap \left( \ex \circ w_n^{\pm}(1/\ga_n+i[t_n^1, +\infty)) \right) 
=\emptyset.\] 
By the definition of $v_n^{\pm}$ in terms of $w_n^{\pm}$, the above equation implies the first two properties 
in the proposition.

Since $u_n(i(-1,+\infty))$ lies in $\interior(\gP)$, $w_n^{\pm}(1/\ga_n+ i (-1, +\infty))$ must lie in the interior of 
$\gF_n(S_n)+k_n$. The sets $\interior (\gF_n(S_n)+k_n)$ and $\interior (\gF_n(S_n)+k_n-1)$ do not meet. 
Hence, $v_n^{\pm}(1/\ga_n-1 + i(-1, +\infty))$ and $w_n^+(1/\ga_n+ i (-1, +\infty))$ are disjoint.
Evidently, $v_n^{\pm}(1/\ga_n-1-i) \neq w_n^+(1/\ga_n-i)$. 
These imply the last property in the proposition.

The proof for $\gep_{n+1} = +1$ is similar. 
\end{proof}

\subsection{The dynamical partition}\label{SS:nest-phase-space}
For $n\geq 0$, $u_n(i(-1, +\infty))$, $v_n^{\pm}(1/\ga_n-1+i(-1, +\infty))$ and $w_n^+(1/\ga_n+i(-1, +\infty))$ 
are pairwise disjoint, and lie in $\interior(\gP_n)$. 
Moreover, $u_n(-i)$, $v_n^{\pm}(1/\ga_n-1-i)$ and $w_n^+(1/\ga_n-i)$ are also pairwise disjoint, 
but all belong to $\partial \gP_n$. 
The union of $u_n$ and $w_n^+$ separate a connected region of $\gP_n$ which contains $v_n^{\pm}$. 
Also, $v_n^+(1/\ga_n-1+i[-1, 0]) \cup v_n^-(1/\ga_n-1+i[-1, 0])$ divides $\gP_n$ into two components.  
Let $\C{M}_n^0$ denote the closure of the connected component of 
\[\gP_n \setminus u_n(i[-1, \infty)) \bigcup w_n^+ (1/\ga_n+ i [-1, \infty)) 
\bigcup v_n^+(1/\ga_n-1+i[-1, 0]) \bigcup v_n^-(1/\ga_n-1+i[-1, 0])\] 
which contains $v_n^+(1/\ga_n -1 + i (0, \infty))$.  

\begin{figure}[ht]
\begin{center}
\begin{pspicture}(0,-.3)(7,4)
\pscustom[linecolor=cyan,fillstyle=solid,fillcolor=cyan]
{
\pscurve(1,4)(1,2)(.9,1.1)(1,1)(1.1,.9)(1,0) 
\psline(4,0)(4,4)
}

\pscustom[linecolor=cyan,fillstyle=solid,fillcolor=cyan]
{
\pscurve(5,4)(5,2)(4.9,1.2)(5,1)
\pscurve(5,1)(4.8,.85)(4.7,.3)
\pscurve(4.7,.3)(4.3,.35)(4,.2)
\psline(4,.2)(4,4)
}

\pscustom[linecolor=orange,fillstyle=solid,fillcolor=orange]
{
\pscurve(5,4)(5,2)(4.9,1.2)(5,1)
\pscurve(5,1)(5.2,1.05)(5.3,.3)
\pscurve[liftpen=1](5.7,.3)(5.8,.85)(6,1)
\pscurve(6,1)(5.9,1.2)(6,2)(6,4)
}

\psdot[dotsize=1pt](1,1)
\pscurve[linewidth=.7pt](1,4)(1,2)(.9,1.1)(1,1)(1.1,.9)(1,0) 

\pscurve[linewidth=.7pt](5,4)(5,2)(4.9,1.2)(5,1)
\pscurve[linewidth=.7pt](5,1)(4.8,.85)(4.7,.3)
\pscurve[linewidth=.7pt](5,1)(5.2,1.05)(5.3,.3)

\pscurve[linewidth=.7pt](6,4)(6,2)(5.9,1.2)(6,1)
\pscurve[linewidth=.7pt](6,1)(5.8,.85)(5.7,.3)
\pscurve[linewidth=.7pt](6,1)(6.2,1.05)(6.3,.3)

\rput(1,-.2){\small $u_n$}
\rput(4.7,-.2){\small $v_n^+$}
\rput(5.25,-.2){\small $v_n^-$}
\rput(5.75,-.2){\small $w_n^+$}
\rput(6.3,-.2){\small $w_n^-$}

\rput(3,3){\small $\mathcal{K}_n^0$}
\rput(5.5,3){\small $\mathcal{J}_n^0$}
\end{pspicture}
\caption{Illustration of the curves $u_n$, $v_n^\pm$ and $w_n^\pm$. 
The set $\mathcal{K}_n^0$ is coloured in blue and $\mathcal{J}_n^0$ is coloured in orange.}
\label{F:w_n-v_n}
\end{center}
\end{figure}

Similarly, $v_n^+(1/\ga_n-1+i[0, +\infty))$ divides $\C{M}_n^0$ into two components. 
Let $\C{J}_n^0$ denote the closure of the component of $\C{M}_n^0 \setminus v_n^+(1/\ga_n-1+i[0, +\infty))$ 
which contains $w_n^+$, and let $\C{K}_n^0$ denote the closure of the component of 
$\C{M}_n^0 \setminus v_n^+(1/\ga_n-1+i[0, +\infty))$ which contains $u_n$. 
Evidently, $\C{M}_n^0=\C{K}_n^0 \cup \C{J}_n^0$. 
These are analogous to $M_n^0$, $J_n^0$ and $K_n^0$ in \refS{SS:straight-model}. 
See \refF{F:w_n-v_n}.  

\begin{lem}\label{L:gU-injective}
For every $n\geq 0$, $\gU_n$ is injective on $\C{M}_n^0$. 
\end{lem}

\begin{proof}
Recall that $\gF_n^{-1}$ is injective on $\C{M}_n^0 \setminus (u_n \cup w_n^+)$, 
(see the proof of \refP{P:w_n-marking}). 
This implies that $\gU_n$ is injective on $\C{M}_n^0 \setminus (u_n \cup w_n^+)$. 
Also, by \refP{P:w_n-marking}, $\gU_n$ maps $u_n$ and $w_n^+$ to disjoint curves on the boundary of 
$\gU_n(\interior \C{M}_n^0)$. 
Therefore, $\gU_n$ is injective on $\C{M}_n^0$. 
\end{proof}

Now we define the sets $\C{M}_n^j$, $\C{J}_n^j$, and $\C{K}_n^j$, for $n\geq 0$ and $j\geq 0$, 
in analogy with $M_n^j$, $J_n^j$ and $K_n^j$ in \refS{SS:straight-model}. 
Assume that $\C{M}_n^j$, $\C{J}_n^j$, and $\C{K}_n^j$ are defined for some $j\geq 0$ and all $n \geq 0$. 
We define $\C{M}_n^{j+1}$, $\C{J}_n^{j+1}$, and $\C{K}_n^{j+1}$ for all $n\geq 0$ by following the 
below two cases. 

If $\gep_{n+1}=-1$, we define 
\begin{equation}\label{E:CM_n^j--1}
\C{M}_n^{j+1} 
=\textstyle{\bigcup_{l=0}^{a_n-2}} \big(\gU_{n+1}(\C{M}_{n+1}^j)+ l \big) 
\bigcup \big(\gU_{n+1}(\C{K}_{n+1}^j)+a_n-1\big).
\end{equation}

If $\gep_{n+1}=+1$, we define 
\begin{equation}\label{E:CM_n^j-+1}
\C{M}_n^{j+1} 
=\textstyle{\bigcup_{l=1}^{a_n}} \big(\gU_{n+1} (\C{M}_{n+1}^j)+ l \big) 
\bigcup \big(\gU_{n+1}(\C{J}_{n+1}^j)+ a_n+1\big ).
\end{equation}

Recall that when $\gep_{n+1}=-1$, by \refP{P:w_n-marking}, $\gU_{n+1}(\C{M}_{n+1}^j)$ lies between 
$u_n$ and $u_n+1$, and by \refP{P:v_n-marking}, $\gU_{n+1}(\C{K}_{n+1}^j)+a_n-1$ lies between 
$u_n+a_n-1$ and $w_n^+$. 
Also, when $\gep_{n+1}=+1$, by \refP{P:w_n-marking}, $\gU_{n+1}(\C{M}_{n+1}^j)$ lies between $u_n-1$ 
and $u_n$, and by \refP{P:v_n-marking}, $\gU_{n+1}(\C{J}_{n+1}^j)+a_n+1$ lies between $u_n+a_n$ 
and $w_n^+$. 
As in \refS{S:arithmetic-model}, it follows that $\C{M}_n^{j+1}$ is closed, bounded by piece-wise analytic curves, 
and $\interior (\C{M}_n^{j+1})$ is connected.
Moreover, $\C{M}_n^{j+1} \subset \C{M}_n^j$. 

Recall the numbers $t_n^j=\Im Y_{n+1}(i t_{n+1}^{j-1})$, with $t_n^0=-1$, for $n\geq -1$ and $j\geq 0$. 
One may see that when $\gep_{n+1}=-1$, 
\[u_n \left (i[t_n^j, +\infty)\right) \subset \partial \C{M}_n^j,\; 
\gU_{n+1}\left(v_{n+1}^+\left(1/\ga_{n+1}-1+i[t_{n+1}^{j-1},\infty) \right)\right) + a_n-1 \subset \partial \C{M}_n^j.\]
When $\gep_{n+1}=+1$, 
\[\gU_{n+1}\left(w_{n+1}^+\left (1/\ga_{n+1}+i[t_{n+1}^{j-1},+\infty) \right) \right)+1 \subset \partial \C{M}_n^j,\]
and 
\[\gU_{n+1}\left(v_{n+1}^-\left(1/\ga_{n+1}-1+i[t_{n+1}^{j-1}, \infty)\right)\right) 
+ a_n+1 \subset \partial \C{M}_n^j.\]
By \refP{P:v_n-marking}, 
\[\gU_{n+1} ( v_{n+1}^+(1/\ga_{n+1}-1 +i[0, +\infty))+ a_n +\gep_{n+1}-1= v_n^+(1/\ga_n-1+i[0, +\infty)).\] 
In particular, $v_n^+(1/\ga_n-1+i[0, +\infty))$ divides $\C{M}_n^{j+1}$ into two 
connected components. Let $\C{J}_n^{j+1}$ denote the closure of the connected component of 
$\C{M}_n^{j+1} \setminus v_n^+(1/\ga_n-1+i[0, +\infty))$ which meets $w_n^+$, 
and let $\C{K}_n^{j+1}$ denote the closure of the connected component of 
$\C{M}_n^{j+1} \setminus v_n^+(1/\ga_n-1+i[0, +\infty))$ which meets $u_n$. 
This completes the induction step to define $\C{M}_n^j$, $\C{K}_n^j$, and $\C{J}_n^k$. 

For $n=-1$, we may let 
\[\C{M}_{-1}^j = \gU_0(\C{M}_{0}^{j+1})+ (\gep_0+1)/2.\]    
For each $n \geq -1$, define 
\begin{equation}
\C{M}_n= \textstyle{\bigcap_{j=0}^{\infty}} \C{M}_n^j.
\end{equation} 

\subsection{Hyperbolic contraction of the changes of coordinates}\label{SS:uniform-contraction}
In this section we establish a uniform contraction of the maps $\gU_n$ on $\C{M}_n^0$, 
with respect to suitable hyperbolic metrics on the domain and range.
This will be similar to \refL{L:well-inside-lifts} and \refP{P:uniform-contraction}. 

\begin{lem}\label{L:extension-gU_n}
There is $\gd_3>0$ such that for every $n\geq 0$, there are open sets $\tilde{\C{M}}_n^0$, $\tilde{\C{K}}_n^0$ and 
$\tilde{\C{J}}_n^0$ satisfying the following properties: 
\begin{itemize}
\item[(i)]$\C{M}_n^0 \subset \tilde{\C{M}}_n^0$, $\C{K}_n^0 \subset \tilde{\C{K}}_n^0$, 
$\C{J}_n^0 \subset \tilde{\C{J}}_n^0$, $\tilde{\C{J}}_n^0 \cup \tilde{\C{K}}_n^0 = \tilde{\C{M}}_n^0$,
\item[(ii)] for all integers $l$ with $(\gep_{n+1}+1)/2 \leq l \leq a_n + \gep_{n+1}-1$, 
$B_{\gd_3}(\gU_{n+1}(\tilde{\C{M}}_{n+1}^0)+l) \subset \tilde{\C{M}}_n^0$, 
\item[(iii)] if $\gep_{n+1}=-1$, $B_{\gd_3}(\gU_{n+1}(\tilde{\C{K}}_{n+1}^0)+a_n-1) \subset \tilde{\C{M}}_n^0$, 
\item[(iv)]if $\gep_{n+1}=+1$, $B_{\gd_3}(\gU_{n+1}(\tilde{\C{J}}_{n+1}^0)+a_n+1) \subset \tilde{\C{M}}_n^0$.
\end{itemize}
\end{lem}

\begin{proof}
Recall the Poincar\'e metric $\rho$ on $\interior(\gP)$. 
Fix an arbitrary constant $R>0$, and let $\C{U}_n$ denote the set of all points in $\interior(\gP)$ 
which lie at hyperbolic distance at most $R$ from $u_n$. 
(Because $u_n(-i)=1-2i \in \partial \gP$, and $\rho$ behaves like $1/(\Im w+2)$ near the bottom end of $\gP$, 
$\C{U}_n$ asymptotically resembles a cone at $u_n(-i)$.) 
The curve $u_n$ divides $\C{U}_n$ into two components; we denote the one on the right hand side of 
$u_n$ by $\C{U}_n^+$ and the one on the left hand side of $u_n$ by $\C{U}_n^-$. 

As in the proof of \refP{P:w_n-marking}, we may use $\gF_n \circ \gF_n^{-1}$ to lift $\C{U}_n^+$ and $\C{U}_n^-$
to define the sets $\C{W}_n^{\pm} \subset \gP_n$ such that 
\[\gF_{n}^{-1}(\C{U}_n^+)= \gF_n^{-1}(\C{W}_n^+), \quad 
\gF_{n}^{-1}(\C{U}_n^-)= \gF_n^{-1}(\C{W}_n^-),\]
with $w_n^-(1/\ga_n-i) \in \C{W}_n^+$, and $w_n^+(1/\ga_n-i) \in \C{W}_n^-$. 
Then, $\C{W}_n^+$ is on the right side of $w_n^-$ and $\C{W}_n^-$ is on the left side of $w_n^+$. 
See \refF{F:L:extension-gU_n} for an illustration of these sets.
Now, let  
\[\C{V}_n^+=\C{W}_n^+-1, \quad \C{V}_n^-=\C{W}_n^- -1.\]
Then, $\C{V}_n^+$ lies on the right hand side of $v_n^-$ with $v_n^-(1/\ga_n-1-i) \in \C{V}_n^+$, and 
$\C{V}_n^-$ lies on the left hand side of $v_n^+$ with $v_n^+(1/\ga_n-1-i) \in \C{V}_n^-$.

\begin{figure}[ht]
\begin{center}
\begin{pspicture}(0, -.5)(7,5)
\pscustom[]
{
\pscurve(.8,4)(.8,2)(.7,1.1)(.8,1)(.95,.85)(1,0) 
\psline(4,0)(4,.22)
}

\pscustom[]
{
\pscurve(4.8,4)(4.8,2)(4.75,1.2)(4.8,1)(4.6,.85)(4.7,.33) 
\pscurve(4.7,.33)(4.6,.35)(4,.2)
\psline(4,.2)(4,.22)
}

\pscustom[]
{
\pscurve(4.8,4)(4.8,2)(4.75,1.2)(4.8,1)(4.6,.85)(4.7,.3) 
\pscurve(4.7,.3)(5.2,.37)(5.3,.3)(5.7,.3)(6.3,.3)
\pscurve[liftpen=1](6.3,.3)(6.4,1.05)(6.1,1.2)(6.2,2)(6.2,4)  
}

\pscurve[linestyle=dashed,linewidth=.4pt](1,4)(1,2)(.9,1.1)(1,1)(1.1,.9)(1,0) 
\rput(1,4.3){\small $u_n$}

\pscurve[linewidth=.4pt](.8,4)(.8,2)(.7,1.1)(.8,1)(.95,.85)(1,0) 
\psline[linewidth=.4pt]{->}(.6,4.6)(.9,3.9)
\rput(.6,4.8){\small $\C{U}_n^-$}

\pscurve[linewidth=.4pt](1.2,4)(1.2,2)(1.1,1.1)(1.2,1)(1.25,.85)(1,0) 
\psline[linewidth=.4pt]{->}(1.4,4.6)(1.1,3.9)
\rput(1.4,4.8){\small $\C{U}_n^+$}

\pscurve[linewidth=.4pt,linestyle=dashed](5,4)(5,2)(4.9,1.2)(5,1) 
\pscurve[linewidth=.4pt,linestyle=dashed](5,1)(4.8,.85)(4.7,.3) 
\pscurve[linewidth=.4pt,linestyle=dashed](5,1)(5.2,1.05)(5.3,.3) 
\rput(5,4.3){\small $v_n^\pm$}

\pscurve[linewidth=.4pt,linecolor](4.8,4)(4.8,2)(4.75,1.2)(4.8,1)(4.6,.85)(4.7,.31) 
\psline[linewidth=.4pt,linecolor]{->}(4.6,4.6)(4.9,3.9)
\rput(4.6,4.8){\small $\C{V}_n^-$}

\pscurve[linewidth=.4pt](5.2,4)(5.2,2)(5.1,1.2)(5.4,1.05)(5.3,.3) 
\psline[linewidth=.4pt]{->}(5.4,4.6)(5.1,3.9)
\rput(5.3,4.8){\small $\C{V}_n^+$}

\pscurve[linewidth=.4pt,linestyle=dashed](6,4)(6,2)(5.9,1.2)(6,1) 
\pscurve[linewidth=.4pt,linestyle=dashed](6,1)(5.8,.85)(5.7,.3) 
\pscurve[linewidth=.4pt,linestyle=dashed](6,1)(6.2,1.05)(6.3,.3)
\rput(6,4.3){\small $w_n^\pm$}

\pscurve[linewidth=.4pt](5.8,4)(5.8,2)(5.75,1.2)(5.8,1)(5.6,.85)(5.7,.3) 
\psline[linewidth=.4pt]{->}(5.6,4.6)(5.9,3.9)
\rput(5.8,4.8){\small $\C{W}_n^-$}

\pscurve[linewidth=.4pt](6.2,4)(6.2,2)(6.1,1.2)(6.4,1.05)(6.3,.3) 
\psline[linewidth=.4pt]{->}(6.4,4.6)(6.1,3.9)
\rput(6.4,4.8){\small $\C{W}_n^+$}

\rput(3,3){\small $\tilde{\mathcal{K}}_n^0$}
\rput(5.6,3){\small $\tilde{\mathcal{J}}_n^0$}

\psline[linewidth=.4pt]{->}(5,-.3)(5,.7)
\rput(4.8,-.3){\small $\C{V}_n^*$}

\psline[linewidth=.4pt]{->}(6,-.3)(6,.7)
\rput(5.8,-.3){\small $\C{W}_n^*$}
\end{pspicture}
\caption{Illustration of the sets $\C{U}_n^\pm$, $\C{V}_n^\pm$ and $\C{W}_n^\pm$ in the proof 
of \refL{L:extension-gU_n}. 
}
\label{F:L:extension-gU_n}
\end{center}
\end{figure}

Recall that $w_n^+(1/\ga_n + i[-1, 0]) \cup w_n^-(1/\ga_n+ i [-1,0])$ divides $\gP_n$ into two components, 
one of which is bounded, and denoted by $\C{W}_n^*$ here. Let $\C{V}_n^*= \C{W}_n^*-1$. 

We define 
\begin{equation}
\begin{gathered}
\tilde{\C{M}}_n^0 
= \interior \left( \C{M}_n^0 \cup \C{U}_n^- \cup \C{W}_n^+ \cup \C{W}_n^* \cup \C{V}_n^* \right), \\
\tilde{\C{K}}_n^0 = \interior \left (\C{K}_n^0 \cup \C{U}_n^- \cup \C{V}_n^+ \cup \C{V}_n^*\right), \\
\tilde{\C{J}}_n^0 = \interior\left(\C{J}_n^0 \cup \C{V}_n^- \cup \C{W}_n^+ \cup \C{W}_n^* \cup \C{V}_n^*\right).
\end{gathered}
\end{equation}

Since $\gU_{n+1}(u_{n+1}) \subset u_n$, and $\gU_{n+1}$ is uniformly contracting with respect to $\gr$, 
$\gU_{n+1}(\C{U}_{n+1}) \subset \C{U}_n$. 
Indeed, it follows from \refL{L:well-inside-lifts} and \refP{P:uniform-contraction} that there is a uniform $\gd>0$ 
such that 
\begin{itemize}
\item $B_\gd(\gU_{n+1}(\C{U}_{n+1})) \subset \C{U}_n$, 
\item when $\gep_{n+1}=-1$, $B_\gd (\gU_{n+1}(\C{U}_{n+1}^-)) \subset \tilde{\C{M}}_n^0$, 
$B_\gd(\gU_{n+1}(\C{V}_{n+1}^+))+a_n-1 \subset \tilde{\C{M}}_n^0$, 
\item when $\gep_{n+1}=+1$, $B_\gd(\gU_{n+1}(\C{W}_{n+1}^-))+1 \subset \tilde{\C{M}}_n^0$ and 
$B_\gd (\gU_{n+1}(\C{V}_{n+1}^+))+a_n+1 \subset \tilde{\C{M}}_n^0$. 
\end{itemize}

The set $\cup_{i=0}^{k_n} f_n^{\circ i}(S_n)$ is compactly contained in both the domain of $f_n$ and the image 
of $f_n$. 
By \refP{P:k_n-bounded}, $k_n$ is uniformly bounded from above, independent of $n$.
By the compactness of $\QIS$, there is $\gd'>0$, independent of $n$, such that $\gd'$-neighbourhood of 
$\cup_{i=0}^{k_n} f_n^{\circ i}(S_n)$ is contained in the domain and also in the image of $f_n$. 
This implies that there is $\gd''>0$, independent of $n$, such that $\gd''$-neighbourhood of 
$\gU_{n}(\partial \tilde{\C{M}}_n^0 \setminus (\C{U}_n^- \cup \C{W}_n^+))$ is contained in $\C{M}_{n-1}^0$. 

We define $\gd_3= \min \{\gd, \gd''\}$. 
\end{proof}

Let $\varrho_n |dz|$ denote the hyperbolic metric of constant curvature $-1$ on $\tilde{\C{M}}_n^0$, for $n\geq 0$. 

\begin{propo}\label{P:uniform-contraction-gU_n}
There is a constant $\gd_5\in (0,1)$ such that for all $n\geq 0$ we have, 
\begin{itemize}
\item[(i)] for all integers $l$ with $(\gep_{n+1}+1)/2 \leq l \leq a_n+ \gep_{n+1}-1$, and all $z\in \C{M}_{n+1}^0$,  
\[(\gU_{n+1}+l)^* \varrho_n(z) \leq \gd_5  \varrho_{n+1}(z);\]
\item[(ii)] if $\gep_{n+1}=-1$, for all $z\in \C{K}_{n+1}^0$, 
$(\gU_{n+1}+a_n-1)^* \varrho_n(z) \leq \gd_5  \varrho_{n+1}(z)$;
\item[(iii)] if $\gep_{n+1}=+1$, for all $z\in \C{J}_{n+1}^0$, 
$(\gU_{n+1}+a_n+1)^* \varrho_n(z) \leq \gd_5  \varrho_{n+1}(z)$.
\end{itemize}
\end{propo}

\begin{proof}
One may repeat the proof of \refP{P:uniform-contraction}; replacing \refL{L:well-inside-lifts} 
by \refL{L:extension-gU_n}.
We obtain the uniform contractions with respect to $\varrho_{n+1}$ on $\tilde{\C{M}}_{n+1}^0$ and 
$\varrho_n$ on $\tilde{\C{M}}_n^0$. 
In particular, the uniform contractions hold on $\C{M}_{n+1}^0$, $\C{K}_{n+1}^0$ and $\C{J}_{n+1}^0$. 
\end{proof}

\subsection{Iterates, shifts, and lifts}\label{SS:iterates-shifts-lifts}
In this section we relate the iterates of $f$ to integer translations and lifts in the renormalisation tower of $f$. 
To do that, we need to define the notion of trajectory of a given point, analogous to the one for the toy model 
in \refS{SS:dynamics-model}.

Given $z_{-1} \in \C{M}_{-1}$, there is a unique $z_0 \in \C{M}_0$ such that $z_{-1}=\gU_0(z_0)+({\gep_0}+1)/2$.
Then, inductively for $i\geq 0$ we identify $l_i \in \mathbb{Z}$ and $z_{i+1} \in \C{M}_{i+1}$ so that 
\begin{equation}\label{E:trace-condition-3}
z_i -l_i \in \gU_{i+1}(\C{M}_{i+1}) \qquad \text{and} \qquad \gU_{i+1}(z_{i+1})+l_{i}= z_i.
\end{equation}
It follows that for all $n\geq 0$, we have 
\begin{equation}\label{E:trace-condition-1}
z_{-1}=(\gU_0+(\gep_0+1)/2) \circ (\gU_1 + l_0) \circ \dots \circ (\gU_n+l_{n-1})(z_n),
\end{equation}
and by \eqref{E:CM_n^j--1} and \eqref{E:CM_n^j-+1}, for all $i \geq 0$, 
\begin{equation}\label{E:trace-condition-2}
(1+\gep_{i+1})/2 \leq l_i \leq a_i + \gep_{i+1}.
\end{equation}
We refer to the sequence $(z_i ; l_i)_{i\geq 0}$ as the \textbf{trajectory} of $z_{-1}$.  
Although the trajectory is not uniquely determined, (for some $z_i$ there might be two integers $l_i$ satisfying 
\refE{E:trace-condition-3}), we refer to any sequence $(z_i; l_i)_{i\geq 0}$ which satisfies both 
\eqref{E:trace-condition-1} and \eqref{E:trace-condition-2} as the trajectory of $z_{-1}$.

\begin{lem}\label{L:renormalisation-edges}
Let $p\geq 0$, and assume that for some $w_1 \in \C{J}_p^0$ and 
$w_2 \in \gU_{p+1}(\C{M}_{p+1}^0)+(\gep_p+1)/2$ we have $\rr(f_p)(\ex(w_1))=\ex(w_2)$. 
Then, $f_p(\gF_p^{-1}(w_1))= \gF_p^{-1}(w_2)$. 
\end{lem}

\begin{proof}
Recall $S_p= S_{f_p}$ from \refS{SS:renormalisation-def}, and that $\ex (\gF_p(S_p))=\Dom \rr (f_p)$. 
Since $\ex (w_1)$  belongs to $\Dom \rr(f_p)$, there is an integer $l_1$ such that  $w_1- l_1 \in \gF_p(S_p)$. 
By the definition of renormalisation, see \refS{SS:renormalisation-def}, 
\[\ex \circ\gF_p \circ f_p\co{k_p} \circ  \gF_p^{-1}(w_1- l_1) =\rr(f_p)(\ex(w_1-l_1)) = \rr(f_p)(\ex(w_1)).\]
Therefore, by the hypothesis in the lemma, we must have 
\[\ex \circ\gF_p \circ f_p\co{k_p} \circ  \gF_p^{-1}(w_1- l_1) = \ex (w_2).\]
This implies that there is $l_2 \in \D{Z}$ such that 
\begin{equation}\label{E:L:renormalisation-edges}
\gF_p \circ f_p\co{k_p} \circ  \gF_p^{-1}(w_1- l_1)+l_2 = w_2.
\end{equation}

Recall that $\C{J}_p^0$ is enclosed by the curves $w_p^+$, $v_p^-=w_p^- -1$ and $\partial (\gF_p(S_p)+\D{Z})$. 
By \refE{E:P:w_n-marking-2}, $w_p^+$  is contained in $\gF_p(S_p)+k_p$ and $v_p^-$ is contained 
in $\gF_p(S_p)+k_p-1$. 
Thus, $\C{J}_p^0 \subset (\gF_p(S_p)+k_p-1) \cup (\gF_p(S_p)+k_p)$.  
Because $w_1-l_1\in \gF_p(S_p)$ and $w_1 \in \C{J}_n^0$, either $l_1=k_p$ or $l_1 = k_p-1$. 

Recall that $u_p$ and $u_p+1$ lie on the boundary of $\gU_{p+1}(\C{M}_{p+1}^0)+(\gep_p+1)/2$, and 
$u_p$ is contained in $\gP=\gF_p(A_{f_p} \cup B_{f_p})$. 
This implies that $l_2\in \{0,1\}$. 
However, using \refE{E:P:w_n-marking-1}, one notes that when $l_1=k_p-1$ we must have $l_2=0$, and 
when $l_1=k_p$ we must have $l_2=1$. 
When $l_1=k_p-1$ and $l_2=0$, \refE{E:L:renormalisation-edges} implies the desired relation in the proposition, 
using the functional relation in \refP{P:Ino-Shi1}-(e). 
When $l_1=k_p$ and $l_2=1$, we apply $\gF_p^{-1}$ to both sides of \eqref{E:L:renormalisation-edges} and use 
\refP{P:Ino-Shi1}-(e), to conclude that 
\[\gF_p^{-1}(w_2)= \gF_p^{-1} \left (\gF_p \circ f_p\co{k_p} \circ  \gF_p^{-1}(w_1- k_p) +1\right )  
= f_p \circ \left(f_p\co{k_p} \circ  \gF_p^{-1}(w_1- k_p)\right )
=f_p(\gF_p^{-1}(w_1)).\qedhere\]
\end{proof}

Recall that for $n\geq 0$, $\C{M}_n \subset \C{M}_n^0= \C{K}_n^0 \cup \C{J}_n^0$. 
For $n\geq 0$, we define the map $\mathcal{E}_n: J_n^0 \to \C{M}_n^0$ as 
\begin{equation}\label{E:E_n}
\mathcal{E}_n = \gF_n \circ f_n \circ  \gF_n^{-1}.  
\end{equation}
Compare the above map to the one in \refE{E:renorm-def}. 

\begin{propo}\label{P:iterates-lifts-one-step}
Assume that $\ga \in \irr$, $f\in \QIS_\ga$, and $z_{-1} \in \C{M}_{-1}$ is an arbitrary point with 
trajectory $(z_i ; l_i)_{i\geq 0}$. 
The following hold: 
\begin{itemize}
\item[(i)] if there is $n\geq 0$ such that $z_n \in \C{K}_n$ and for all $0 \leq i \leq n-1$, $z_i \in \C{M}_i \setminus \C{K}_i$, 
then 
\[s\circ f \circ s(\ex(z_{-1})) 
= \ex \circ \left(\gU_0+\frac{\gep_0+1}{2}\right ) \circ \left(\gU_1+\frac{\gep_1+1}{2}\right ) 
\circ \cdots \circ \left (\gU_n +\frac{\gep_{n}+1}{2} \right) (z_n+1).\]
\item[(ii)] if for all $i\geq 0$, $z_i \in \C{M}_i \setminus \C{K}_i$, then for all $n\geq 0$,  
\[s\circ f \circ s(\ex(z_{-1})) 
= \ex \circ \left(\gU_0+\frac{\gep_0+1}{2}\right ) \circ \left(\gU_1+\frac{\gep_1+1}{2}\right ) 
\circ \cdots \circ \left (\gU_n +\frac{\gep_{n}+1}{2} \right) (\mathcal{E}_n(z_n)).\]
\end{itemize}
\end{propo}

\begin{proof}
Part (i). 
Since $z_n\in \C{K}_n^0$, $z_n+1 \in \C{M}_n^0$ and $\gU_n(z_n +1)+ (\gep_n+1)/2$ is defined and belongs to 
$\C{M}_{n-1}^0$. 
By \refP{P:Ino-Shi1}-(e), we have 
\begin{equation}\label{E:P:iterates-lifts-one-step-1}
f_n \circ \gF_{n}^{-1}(z_n)= \gF_n^{-1}(z_n+1).
\end{equation}
Now we consider two cases, based on whether $n=0$ or $n\geq 1$. 

First assume that $n=0$. If $\gep_0=-1$, then, by \refP{P:sequence-renormalisations}, $f_0= s \circ f \circ s$, and by 
\refE{E:gU-defn}, $\ex \circ \gU_0= \gF_0^{-1}$. 
Then, by the definition of trajectory, $\ex (z_{-1}) = \gF_0^{-1}(z_0)$. 
Using \refE{E:P:iterates-lifts-one-step-1} with $n=0$, and \refP{P:Ino-Shi1}-(e), we get 
\begin{align*}
s \circ f \circ s (\ex (z_{-1}))
=f_0(\gF_0^{-1}(z_0))
&= \gF_0^{-1}(z_0+1) \\
&=\ex \circ \gU_0 (z_0+1)
=\ex \circ (\gU_0+ (\gep_0+1)/2) (z_0+1).
\end{align*}
Similarly, if $\gep_0=+1$, $f_0=f$ and $\ex \circ \gU_0= s \circ \gF_0^{-1}$. 
Then, $\ex (z_{-1}) =s\circ \gF_0^{-1}(z_0)$. 
Therefore, 
\begin{align*}
s \circ f \circ s (\ex (z_{-1}))
= s \circ f (\gF_0^{-1}(z_0)) 
& = s\circ  f_0 (\gF_0^{-1}(z_0)) \\
&= s \circ  \gF_0^{-1}(z_0+1)
=\ex \circ (\gU_0+ (\gep_0+1)/2) (z_0+1).
\end{align*}

Now assume that $n\geq 1$. 
By considering two cases based on $\gep_n=\pm 1$, as in the previous case, one may see that 
\eqref{E:P:iterates-lifts-one-step-1} and \eqref{E:trace-condition-3} imply that 
\begin{equation}\label{E:P:iterates-lifts-one-step-2}
\rr(f_{n-1})(\ex(z_{n-1})) = \ex \circ  \left(\gU_n +\frac{\gep_{n}+1}{2} \right) (z_n+1).
\end{equation}
Then, applying \refL{L:renormalisation-edges} with $p=n-1$, $w_2=(\gU_n +(\gep_{n}+1)/2) (z_n+1)$ 
and $w_1=z_{n-1}$ we get  
\begin{equation}\label{E:P:iterates-lifts-one-step-3}
f_{n-1} \circ \gF_{n-1}^{-1}(z_{n-1})=\gF_{n-1}^{-1} \circ (\gU_n +(\gep_{n}+1)/2) (z_n+1).
\end{equation}
Compare the above relation to the one in \refE{E:P:iterates-lifts-one-step-1}. 

We repeat the above paragraph, replacing the relation in \refE{E:P:iterates-lifts-one-step-1} with the one in 
\refE{E:P:iterates-lifts-one-step-3}. 
If $n-1=0$, we get 
\begin{equation*}
s \circ f \circ s(\ex(z_{n-2})) 
= \ex \circ \left(\gU_{n-1} + \frac{\gep_{n-1}+1}{2} \right) \circ \left(\gU_n +\frac{\gep_{n}+1}{2} \right) (z_n+1).
\end{equation*}
which is the desired relation in Part (i). 
If $n-1 \geq 1$, \refE{E:P:iterates-lifts-one-step-3} implies that   
\begin{equation*}
\rr(f_{n-2})(\ex(z_{n-2})) 
= \ex \circ  \left(\gU_{n-1} + \frac{\gep_{n-1}+1}{2} \right) \circ \left(\gU_n +\frac{\gep_{n}+1}{2} \right) (z_n+1).
\end{equation*}
Repeating the above process, until we reach level $0$, leads to the desired relation in Part (i).  

Part(ii). By the definition of renormalisation in \refS{SS:renormalisation-def}, $\mathcal{E}_n$ induces the relation 
\[\rr(f_n) (\ex(z_n))= \ex (\mathcal{E}_n(z_n)).\]
By \refL{L:renormalisation-edges}, the above relation implies that 
\begin{equation}\label{E:P:iterates-lifts-one-step-4}
f_n \circ \gF_n^{-1}(z_n) = \gF_n^{-1}(\mathcal{E}_n(z_n)).
\end{equation}
Now one may repeat the argument in Part (i); replacing \refE{E:P:iterates-lifts-one-step-1} 
with \refE{E:P:iterates-lifts-one-step-4}. 
\end{proof}

\begin{propo}\label{P:iterates-lifts-general}
Assume that $\ga \in \irr$, $f\in \QIS_\ga$, and $z_{-1} \in \C{M}_{-1}$ with trajectory $(z_i ; l_i)_{i\geq 0}$. 
For every $\ell \geq 1$, there is a finite sequence of integers $(j_i)_{i=0}^v$ such that either 
\begin{multline}\label{E:P:iterates-lifts-general-10}
s \circ f\co{\ell} \circ s (\ex (z_{-1})) \\
= \ex \circ (\gU_0+(\gep_0+1)/2) \circ (\gU_1+j_0) \circ (\gU_2 + j_1) \circ \cdots \circ (\gU_v +j_{v-1}) (z_v+j_v),
\end{multline}
or 
\begin{multline}\label{E:P:iterates-lifts-general-11}
s \circ f\co{\ell} \circ s(\ex (z_{-1})) \\
= \ex \circ (\gU_0+(\gep_0+1)/2) \circ (\gU_1+j_0) \circ (\gU_2 +j_1) 
\circ \cdots \circ (\gU_v +j_{v-1}) (\mathcal{E}_v(z_v)). 
\end{multline}
\end{propo}

Each map $\gU_j$ on the right hand side of \eqref{E:P:iterates-lifts-general-10} and 
\eqref{E:P:iterates-lifts-general-11} is only considered on $\C{M}_j$. 

\begin{proof}
\refP{P:iterates-lifts-one-step} readily implies the statement for $\ell=1$. 
Assume that the statement holds for some $\ell -1 \geq 1$. We aim to prove it for $\ell$.  
By the induction hypothesis, there is a finite sequence of integers $(j_i)_{i=0}^n$ such that either 
\begin{multline}\label{E:P:iterates-lifts-general-1}
s\circ  f\co{(\ell-1)} \circ s(\ex (z_{-1})) \\
= \ex \circ (\gU_0+(\gep_0+1)/2) \circ (\gU_1+j_0) \circ (\gU_2 + j_1) \circ \cdots \circ (\gU_n +j_{n-1}) (z_n+j_n), 
\end{multline}
or 
\begin{multline}\label{E:P:iterates-lifts-general-2}
s\circ f\co{(\ell-1)} \circ s(\ex (z_{-1})) \\
= \ex \circ (\gU_0+(\gep_0+1)/2) \circ (\gU_1+j_0) \circ (\gU_2 + j_1) \circ \cdots \circ (\gU_n +j_{n-1}) (\mathcal{E}_n(z_n)).
\end{multline}

We consider two cases, based on which of \eqref{E:P:iterates-lifts-general-1} 
and \eqref{E:P:iterates-lifts-general-2} holds. 

\medskip 
{\em Case 1.} Assume that \refE{E:P:iterates-lifts-general-1} holds. 

\smallskip 

Define 
\[w_{-1}= (\gU_0+(\gep_0+1)/2) \circ (\gU_1+j_0) \circ (\gU_2 + j_1) \circ \cdots \circ (\gU_n +j_{n-1}) (z_n+j_n).\]
The point $w_{-1}$ has a trajectory $(w_i ; s_i)_{i\geq 0}$ which satisfies the relations 
\begin{equation}\label{E:P:iterates-lifts-general-3}
w_i= (\gU_{i+1}+j_{i}) \circ \cdots \circ (\gU_n +j_{n-1}) (z_n+j_n), \quad \tfor 0 \leq i \leq n-1,
\end{equation} 
\begin{equation}\label{E:P:iterates-lifts-general-5}
w_n = z_n+j_n, 
\end{equation}
\begin{equation}\label{E:P:iterates-lifts-general-6}
w_i=z_i, \quad \tfor i\geq n+1. 
\end{equation}
By \refE{E:P:iterates-lifts-general-1}, 
\begin{equation}\label{E:P:iterates-lifts-general-7}
s \circ  f\co{\ell} \circ s(\ex (z_{-1}))
= (s \circ f \circ s) \circ s \circ  f\co{(\ell-1)} \circ s(\ex (z_{-1}))
=s \circ f \circ s(\ex (w_{-1})).
\end{equation}
Now, we consider two scenarios. 

{\em Case 1.1:} There is $m \geq 0$ such that $w_m \in \C{K}_m^0$, and for all $0 \leq i \leq m-1$, 
$w_i \in \C{M}_i \setminus \C{K}_i$. 

We may employ \refP{P:iterates-lifts-one-step}, to obtain 
\begin{multline}\label{E:P:iterates-lifts-general-9}
s \circ f \circ s(\ex (w_{-1})) \\
=\ex  \circ \left(\gU_0+\frac{\gep_0+1}{2}\right ) \circ \left(\gU_1+\frac{\gep_1+1}{2}\right ) 
\circ \cdots \circ \left (\gU_m + \frac{\gep_{m}+1}{2} \right) (w_m+1).
\end{multline} 
There are three scenarios based on the value of $m$ relative $n$. 

{\em Case 1.1.1:} $m \leq n-1$. 
By \refE{E:P:iterates-lifts-general-3}, the right hand side of \eqref{E:P:iterates-lifts-general-9} may be written as 
\begin{multline*}
\ex \circ \left(\gU_0+\frac{\gep_0+1}{2}\right ) \circ \left(\gU_1+\frac{\gep_1+1}{2}\right ) 
\circ \cdots \circ \left (\gU_m + \frac{\gep_{m}+1}{2} \right) \\
\circ  \left(\gU_{m+1}+ j_m+1\right ) \circ \cdots \circ \left (\gU_n + j_{n-1} \right) (z_n+j_n).
\end{multline*} 
{\em Case 1.1.2:} $m=n$. 
By \refE{E:P:iterates-lifts-general-5}, the right hand side of \eqref{E:P:iterates-lifts-general-9} may be written as 
\begin{equation*}
\ex \circ \left(\gU_0+\frac{\gep_0+1}{2}\right ) \circ \left(\gU_1+\frac{\gep_1+1}{2}\right ) 
\circ \cdots \circ \left (\gU_n + \frac{\gep_{n}+1}{2} \right) (z_n+j_n+1).
\end{equation*}
{\em Case 1.1.3:} $m \geq n+1$. 
By \refE{E:P:iterates-lifts-general-6}, the right hand side of \eqref{E:P:iterates-lifts-general-9} may be written as 
\begin{equation*}
\ex \circ \left(\gU_0+\frac{\gep_0+1}{2}\right ) \circ \left(\gU_1+\frac{\gep_1+1}{2}\right ) 
\circ \cdots \circ \left (\gU_m + \frac{\gep_{m}+1}{2} \right) (z_m+1).
\end{equation*}

{\em Case 1.2:} There is no $m\geq 0$ satisfying $w_m \in \C{K}_m$. 

By \refP{P:iterates-lifts-one-step}, and \refE{E:P:iterates-lifts-general-6}, we obtain 
\begin{multline*}
s \circ f \circ s(\ex (w_{-1})) \\
= \ex \circ \left(\gU_0+\frac{\gep_0+1}{2}\right ) \circ \left(\gU_1+\frac{\gep_1+1}{2}\right ) 
\circ \cdots \circ \left (\gU_{n+1} + \frac{\gep_{n+1}+1}{2} \right) (\mathcal{E}_{n+1}(w_{n+1})) \\
= \ex \circ \left(\gU_0+\frac{\gep_0+1}{2}\right ) \circ \left(\gU_1+\frac{\gep_1+1}{2}\right ) 
\circ \cdots \circ \left (\gU_{n+1} + \frac{\gep_{n+1}+1}{2} \right) (\mathcal{E}_{n+1}(z_{n+1})).
\end{multline*}

\medskip

{\em Case 2.} Assume that \refE{E:P:iterates-lifts-general-2} holds. 

\smallskip 

Define 
\[w_{-1}= 
(\gU_0+(\gep_0+1)/2) \circ (\gU_1+j_0) \circ (\gU_2 + j_1) \circ \cdots \circ (\gU_n +j_{n-1}) (\mathcal{E}_n(z_n)).\]
The point $w_{-1}$ has a trajectory $(w_i ; s_i)_{i\geq 0}$ which satisfies 
\begin{equation}\label{E:P:iterates-lifts-general-4}
w_i= (\gU_{i+1}+j_{i}) \circ \cdots \circ (\gU_n +j_{n-1}) (\mathcal{E}_n(z_n)), \quad \tfor 0 \leq i \leq n-1,
\end{equation}
\begin{equation}\label{E:P:iterates-lifts-general-8}
w_n = \mathcal{E}_n(z_n).
\end{equation}
By \refE{E:P:iterates-lifts-general-2}, 
\begin{equation}
s \circ f\co{\ell} \circ s(\ex (z_{-1}))
= (s\circ f \circ s) \circ s \circ f\co{(\ell-1)} \circ s(\ex (z_{-1}))
= s \circ f \circ s(\ex (w_{-1})).
\end{equation}
Now we consider two scenarios. 

{\em Case 2.1:} There is $m \leq n-1$, such that $w_m \in \C{K}_m^0$, and for all $0 \leq i \leq m-1$, 
$w_i \in \C{M}_i^0 \setminus \C{K}_i^0$. 

Using \refP{P:iterates-lifts-one-step}, and \refE{E:P:iterates-lifts-general-4}, 
\begin{multline*}
s \circ f \circ s(\ex (w_{-1}))\\
=\ex \circ \left(\gU_0+\frac{\gep_0+1}{2}\right ) \circ \left(\gU_1+\frac{\gep_1+1}{2}\right ) 
\circ \cdots \circ \left (\gU_m + \frac{\gep_{m}+1}{2} \right) (w_m+1)  \\
= \ex \circ \left(\gU_0+\frac{\gep_0+1}{2}\right ) \circ \left(\gU_1+\frac{\gep_1+1}{2}\right ) 
\circ \cdots \circ \left (\gU_m + \frac{\gep_{m}+1}{2} \right) \\
\circ  \left(\gU_{m+1}+ j_m+1\right ) \circ \cdots \circ \left (\gU_n + j_{n-1} \right) (\mathcal{E}_n(z_n)).
\end{multline*} 

{\em Case 2.2:} For all $m$ with $0 \leq m \leq n-1$, $w_m \in \C{M}_m^0 \setminus \C{K}_m^0$. 

By \refP{P:iterates-lifts-one-step}, and using $w_n = \C{E}_n(z_n) \in \C{K}_n^0$, we get 
\begin{multline}\label{E:P:iterates-lifts-general-12}
s \circ f \circ s(\ex (w_{-1})) \\
=\ex \circ \left(\gU_0+\frac{\gep_0+1}{2}\right ) \circ \left(\gU_1+\frac{\gep_1+1}{2}\right ) 
\circ \cdots \circ \left (\gU_n + \frac{\gep_{n}+1}{2} \right) (w_n+1) \\
=\ex \circ \left(\gU_0+\frac{\gep_0+1}{2}\right ) \circ \left(\gU_1+\frac{\gep_1+1}{2}\right ) 
\circ \cdots \circ \left (\gU_n + \frac{\gep_{n}+1}{2} \right) (\mathcal{E}_n(z_n)+1).
\end{multline} 
There are two scenarios based on whether $z_{n+1} \in \C{K}_{n+1}^0$, or not. 

{\em Case 2.2.1:} $z_{n+1} \in \C{K}_{n+1}$. Then $z_{n+1} +1 \in \C{M}_{n+1}$, and hence 
$f_{n+1} \circ \gF_{n+1}^{-1}(z_{n+1})= \gF_{n+1}^{-1} (z_{n+1}+1)$. 
The latter relation implies that 
\[\rr(f_n)(\ex (z_n))= \ex \circ (\gU_{n+1}+(\gep_{n+1}+1)/2)(z_{n+1}+1).\]
We may apply \refL{L:renormalisation-edges}, to get  
\[f_n  \circ \gF_n^{-1}(z_n)   = \gF_n^{-1} \circ (\gU_{n+1}+(\gep_{n+1}+1)/2)(z_{n+1}+1).\]
This implies that 
\[\mathcal{E}_n(z_n)=\gF_n \circ f_n  \circ \gF_n^{-1}(z_n) = (\gU_{n+1}+(\gep_{n+1}+1)/2)(z_{n+1}+1).\]
By \refE{E:P:iterates-lifts-general-8}, and the above relation, the right hand side 
of \eqref{E:P:iterates-lifts-general-12} 
may be written as 
\[\ex \circ \left(\gU_0+\frac{\gep_0+1}{2}\right ) \circ \left(\gU_1+\frac{\gep_1+1}{2}\right ) 
\circ \cdots \circ \left (\gU_n + \frac{\gep_{n}+1}{2} \right) 
\circ  \left(\gU_{n+1}+ \frac{\gep_{n+1}+1}{2}+1 \right ) (z_{n+1}+1).\]
{\em Case 2.2.2:} $z_{n+1} \in \C{M}_{n+1}\setminus \C{K}_{n+1}$. As in the previous case, one may see that 
\[\mathcal{E}_n(z_n)= (\gU_{n+1}+(\gep_{n+1}+1)/2)(\mathcal{E}_{n+1}(z_{n+1})).\] 
Therefore, by \refE{E:P:iterates-lifts-general-8}, and the above relation, the right hand side 
of \eqref{E:P:iterates-lifts-general-12} 
may be written as  
\begin{equation*}
\begin{aligned}
\ex \circ \left(\gU_0+\frac{\gep_0+1}{2}\right ) \circ \left(\gU_1+\frac{\gep_1+1}{2}\right ) 
\circ & \cdots \circ \left (\gU_n + \frac{\gep_{n}+1}{2} \right) \\
& \circ  \left(\gU_{n+1}+ \frac{\gep_{n+1}+1}{2}+1 \right ) (\mathcal{E}_{n+1}(z_{n+1})).
\end{aligned} \qedhere
\end{equation*}
\end{proof}

\begin{rem}
Assume that $z_{-1}$ in $\C{M}_{-1}$ has a trajectory $(z_i; l_i)_{i\geq 0}$ such that for infinitely 
many distinct $n\geq0$ we have $z_n \in \C{K}_n^0$. 
It is evident from the proof of \refP{P:iterates-lifts-general} that for every $\ell \geq 1$, 
\refE{E:P:iterates-lifts-general-10} holds for some $(j_i)_{i=0}^v$. 
For instance, if $\ga$ is an irrational number with $\gep_i=-1$ for all $i \geq 0$, one may 
employ \refP{P:uniform-contraction-gU_n} to conclude that for every $z_{-1}\in \C{M}_{-1}$, infinitely 
often $z_n \in \C{K}_n^0$. 
However, if $\ga$ is an irrational number with $\gep_i=+1$ for all $i\geq 0$, there are $z_{-1}\in \C{M}_{-1}$ 
such that for all $n\geq 0$, $z_n \in \C{M}_n \setminus \C{K}_n^0$. 
For such $z_{-1}$, it is not possible to have \refE{E:P:iterates-lifts-general-10} for all $\ell \geq 1$. 
Once we establish the relation between $\C{M}_{-1}$ and $M_{-1}$ in \refS{S:uniformisation}, 
it becomes clear that the set of such $z_{-1}$ forms a countable union of arcs in $\C{M}_{-1}$. 
\end{rem}

The inverse of the statement in \refP{P:iterates-lifts-general} is also true, which we state below. 

\begin{propo}\label{P:lifts-iterates-general}
Assume that $\ga \in \irr$, $f\in \QIS_\ga$, and $z_{-1} \in \C{M}_{-1}$ is an arbitrary point with 
trajectory $(z_i; l_i)_{i\geq 0}$. 
Then, the following hold: 
\begin{itemize}
\item[(i)] for every sequence of integers $(j_i)_{i=0}^v$ with $j_v \geq 1$, 
there is $\ell \geq 1$ such that \refE{E:P:iterates-lifts-general-10} holds, provided each $\gU_j$ in the right hand side 
of \refE{E:P:iterates-lifts-general-10} is considered on $\C{M}_j$. 
\item[(ii)] for every sequence of integers $(j_i)_{i=0}^{v-1}$, there is $\ell \geq 1$ such 
that \refE{E:P:iterates-lifts-general-11} holds, provided each $\gU_j$ in the right hand side 
of \refE{E:P:iterates-lifts-general-11} is considered on $\C{M}_j$. 
\end{itemize}
\end{propo}

We will not use the above proposition in this paper, it is only stated for the record.

\begin{proof}
By \refP{P:Ino-Shi1}-(e), each translation in $\C{M}_j$ corresponds to 
an iterate of $f_j$. Each iterate of $f_j$ corresponds to an iterate of $\rr(f_{j-1})$. 
Each iterate of $\rr(f_{j-1})$ corresponds to a finite number of iterates by $f_{j-1}$. Combining these steps, 
one concludes that the translations and lifts correspond to some iterate of $f$ under the changes of coordinates. 
The argument is similar to the proof of \refL{L:renormalisation-edges}, so we leave the details to the reader. 
One may consult the proofs of similar statements in \cite{Che13,Che19,AC18}. 
\end{proof}

The following lemma will not be formally used in this paper, but it sheds some light on the argument 
presented in \refS{S:uniformisation}. 

\begin{lem}\label{L:tails-disjoint-CM_n}
For every $n\geq 0$, and every integer $l \in [0, a_n+\gep_n]$, we have
\begin{equation*}
\begin{gathered}
(u_n(i[-1, 0))+ l)  \cap \C{M}_n^0= \emptyset, \; w_n^{\pm}(1/\ga_n+ i[-1, 0)) \cap \C{M}_n^0=\emptyset, \; 
v_n^{\pm}(1/\ga_n-1+ i[-1, 0)) \cap \C{M}_n^0=\emptyset.
\end{gathered}
\end{equation*}
\end{lem}

\begin{proof}
Fix arbitrary $n \geq 0$ and $0 \leq l \leq a_n+\gep_n$. 
Recall from \refP{P:w_n-marking} and \eqref{E:v_n-def} that for all $i\geq 0$, $w_i^+(1/\ga_i)= w_i^-(1/\ga_i)$ 
and $v_i^+(1/\ga_i-1)= v_i^-(1/\ga_i-1)$. 
By propositions \ref{P:w_n-marking} and \ref{P:v_n-marking}, for every $z_i$ in 
$\{u_i(0)+ l, w_i^+(1/\ga_i),v_i^+(1/\ga_i-1)\}$, there are $z_{i+1}$ in 
$\{u_{i+1}(0), w_{i+1}^+(1/\ga_i),v_{i+1}^+(1/\ga_i-1)\}$ and $j_i \in \mathbb{Z}$ such that 
$\gU_{i+1}(z_{i+1})+j_i=z_i$. 
This implies that $z_n$ has a trajectory $(z_i; j_i)_{i \geq n+1}$, such that for all $i \geq n+1$, 
$z_i$ belongs to $\{u_i(0), w_i^+(1/\ga_i), v_i^+(1/\ga_i-1)\}$. 

Now assume that there is $t \in [-1, 0)$ such that $u_n(it)+l$ belongs to $\C{M}_n$. 
Let $w_n =u_n(it)+l$ and $z_n=u_n(0)+l$. 
It follows from Propositions \ref{P:w_n-marking} and \ref{P:v_n-marking}, that $z_n$ and $w_n$ 
have trajectories $(z_i; j_i)_{i\geq n+1}$ and $(w_i; j_i)_{i\geq n+1}$, respectively. 
That is, the integers $j_i$ in the corresponding trajectories are identical. 
By the definition of trajectories, for all $m \geq n+2$, we have 
\[z_n=(\gU_{n+1}+(\gep_{n+1}+1)/2+ l) \circ (\gU_{n+2}+j_{n+1}) \circ \dots \circ (\gU_m+j_{m-1})(z_m), \]
\[w_n=(\gU_{n+1}+(\gep_{n+1}+1)/2+ l) \circ (\gU_{n+2}+j_{n+1}) \circ \dots \circ (\gU_m+j_{m-1})(w_m).\]
Recall the hyperbolic metric $\varrho_m$ on $\tilde{\C{M}}_m^0 \supset \C{M}_m^0$, 
discussed in \refS{SS:uniform-contraction}. 
Since $z_{m+1}$ and $w_{m+1}$ belong to $\C{M}_{m+1} \subset \C{M}_{m+1}^0 \subset \tilde{\C{M}}_{m+1}^0$, 
it follows from \refL{L:extension-gU_n}, that the distance between $w_m$ and $z_m$ with respect to $\varrho_m$ 
is uniformly bounded from above, independent of $m$. 
Then, by \refP{P:uniform-contraction-gU_n}, we must have $w_n=z_n$. 
That is, $u_n(0)=u_n(it)$. 
This contradicts the injectivity of $u_n$ on $[-1, +\infty)$, proved in \refP{P:injective-u_n}.  

The latter two properties in the lemma are proved similarly, where one employs the injectivity of 
$w_n^{\pm}$ and $v_n^{\pm}$ in Propositions \ref{P:w_n-marking} and \ref{P:v_n-marking}. 
\end{proof}

\subsection{Capturing the post-critical set}\label{SS:capturing-pc}
Let us define the sets 
\begin{equation}
\hat{\C{A}}_f= s \circ \ex(\C{M}_{-1}) \cup \{0\}, \qquad \text{and} \qquad \C{A}_f= \partial  \hat{\C{A}}_f.
\end{equation}
When $\ga \in \E{B}$, $s \circ \ex(\C{M}_{-1})$ contains a punctured neighbourhood of $0$, 
so we have added $0$ to that set, to avoid including $0$ in $\C{A}_f$. 
When $\ga \notin \E{B}$, as we shall see in the next section, $s \circ \ex(\C{M}_{-1}) \cup\{0\}$ has empty interior, 
so its boundary is itself. 
Recall from \refS{S:intro} that the post-critical set of $f$ is denoted by $\gL(c_f)$. 
In this section we prove that $\gL(c_f) \subseteq \hat{\C{A}}_f$. 

\begin{propo}\label{P:pc-covered}
For every $\ga \in \irr$ and $f\in \QIS_\ga$, $f(\hat{\C{A}}_f) \subseteq \hat{\C{A}}_f$, and 
$\gL(c_f) \subseteq \hat{\C{A}}_f$. 
\end{propo}

\begin{proof}
The first part of the proposition follows from \refP{P:iterates-lifts-general}. 
For the latter part of the proposition we note that $1=u_{-1}(0) \in \C{M}_{-1}$, and $s \circ \ex(1)$ is the critical 
value of $f$. Thus, by the first part of the proposition, the orbit of the critical value of $f$ is contained 
in $\hat{\C{A}}_f$. Because $\hat{\C{A}}_f$ is a closed set, it must contain $\gL(c_f)$. 
\end{proof}
 \section{Uniformisation of the post-critical set}\label{S:uniformisation}
In this section we complete the proofs of the theorems stated in the Introduction. 
There remains to show that $\C{A}_\ga$ is homeomorphic to the topological model $A_\ga$.

Recall $M_n^j$ and $Y_n$ employed to build the topological model in \refS{S:arithmetic-model}, and the 
correspond dynamical objects $\C{M}_n^j$ and $\gU_n$ from \refS{S:modified nest}. 
We aim to build homeomorphisms from each $\C{M}_n^j$ to $M_n^j$ which collectively enjoy 
equivariant properties with respect to $Y_n$ and $\gU_n$.
Then we pass to limit to obtain a homeomorphism from $\C{M}_n$ to $M_n$, which in turn will induce a topological 
conjugacy from $\C{A}_f$ to $A_\ga$, conjugating $f$ to the model map $T_\ga$. 

\subsection{Summary of the markings}\label{SS:markings}
In Sections~\ref{S:parametrised curve} and \ref{S:modified nest} we introduced the curves $u_n$, $v_n^{\pm}$, 
and $w_n^\pm$, for $n\geq 0$. 
We also established some remarkable equivariant properties of these curves, which play a fundamental role in 
this section. 
For the convenience of the reader, we collect (and reformulate some of) those relations, 
and present them below. 

For every $n\geq 0$, the following hold:
\begin{itemize}
\item[(E1)] $u_n:  i [-1,+\infty) \to \partial \C{M}_n^0$, and for all $t \geq -1$, 
\begin{equation*}\label{E:comm-u_n-Y_n-gU_n}
\gU_n \circ u_n( i t)= u_{n-1} \circ  Y_n ( i t).
\end{equation*}
\item[(E2)] $w_n^+:(1/\ga_n+ i [-1,+\infty)) \to \partial \C{M}_n^0$, 
$w_n^- : (1/\ga_n+ i [-1,+\infty)) \to \gP_n$, and for all $t \geq -1$, 
\begin{equation*}\label{E:comm-u_n-w_n-Y_n-gU_n}
\gU_n \circ  w_n^+(1/\ga_n+  i t) = \gU_n \circ  w_n^-(1/\ga_n+  i t) = u_{n-1} \circ Y_n( i t) -\gep_n;
\end{equation*}
\item[(E3)] $v_n^{\pm}:(1/\ga_n-1 +  i [-1, \infty)) \to \C{M}_n$, and for all $t \geq -1$,   
\begin{equation*}\label{E:comm-v_n-w_n-Y_n-gU_n}
\begin{gathered}
\gU_n \circ v_n^{\pm}(1/\ga_n-1+it)+a_{n-1}+\gep_n= w_{n-1}^{\pm} (1/\ga_{n-1}+Y_n(it)), \quad \tif \gep_n=-1, \\
\gU_n \circ v_n^{\pm}(1/\ga_n-1+it)+a_{n-1}+\gep_n= w_{n-1}^{\mp} (1/\ga_{n-1}+Y_n(it)), \quad \tif \gep_n=+1. 
\end{gathered}
\end{equation*}
\end{itemize}

See \refF{F:lifting-partial-uniformisations} for an illustration of the above relations. 

\subsection{Partial uniformisations matching the markings}\label{SS:partial-uniformisation}
Let $n\geq 0$ and $j\geq 0$. We say that a map $\gO: \C{M}_n^j  \to M_n^j$ \textbf{matches} 
$(u_n, v_n^{\pm}, w_n^+)$, if the following four properties hold:
\begin{itemize}
\addtolength{\itemsep}{.5em}
\item[(M1)] for every $z \in M_n^j$ with $\Re z=0$ we have $\gO \circ u_n(z) = z$; 
\item[(M2)] for every $z \in M_n^j$ with $\Re z=1/\ga_n$ we have $\gO \circ w_n^+(z) = z$; 
\item[(M3)] for every $z \in M_n^j$ with $\Re z=1/\ga_n-1$ we have $\gO \circ v_n^+(z) = z$;
\item[(M4)] for every $z \in M_n^j$ with $\Re z=1/\ga_n-1$ we have $\gO \circ v_n^-(z) = z$; 
\end{itemize}
In other words, $\gO: \C{M}_n^j  \to M_n^j$ matches $(u_n, v_n^{\pm}, w_n^+)$ if it is equal to the inverses 
of the maps $u_n$, $v_n^+$, $v_n^-$, and $w_n^+$, where they are defined.  

By Propositions \ref{P:w_n-marking} and \ref{P:disjoint-curves-u_n-v_n-w_n}, 
$v_n^+= v_n^-$ on $1/\ga_n-1+i[0, \infty)$. 
This implies that (M3) and (M4) do not contradict.
However, because $v_n^+(1/\ga_n-1+i[-1, 0)) \cap v_n^-(1/\ga_n-1+i[-1, 0))= \emptyset$, $\Omega_n$ may 
not be injective. 
For this reason, maps from $M_n^j$ to $\C{M}_n^j$ matching the markings must be 
multivalued (hence the reason for working with maps from $\C{M}_n^j$ to $M_n^j$). 
As we shall see in a moment, this does not cause any problems, since by \refL{L:tails-disjoint-CM_n} 
the curves $v_n^+(1/\ga_n-1+i[-1, 0))$ and $v_n^-(1/\ga_n-1+i[-1, 0))$ do not meet $\C{M}_n$. 

\begin{propo}\label{P:U_n^0}
There is a constant $C_8$ such that for every $n\geq 0$ there exists a continuous and surjective map 
\[\gO_n^0: \C{M}_n^0 \to M_n^0\] 
which matches $(u_n, v_n^{\pm}, w_n^+)$, and for all $z \in \C{M}_n^0$, $|\gO_n^0(z)-z| \leq C_8$.
\end{propo}

In light of \refL{L:tails-disjoint-CM_n}, we may choose the map $\gO_n^0$ in the above proposition to 
be injective on $\C{M}_n$. But this will not be needed for the overall argument to work in this section. 

\begin{proof}
Recall that $\C{M}_n^0$ is bounded by the curves $u_n$, $w_n^+$, and a continuous curve $\gm_n$ 
which connects $u_n(-i)$ to $w_n^+(1/\ga_n-i)$. 
By Propositions \ref{P:near-identity-u_n} and \ref{P:near-translation-w_n}, the maps $u_n$, $v_n^{\pm}$ and 
$w_n^+$ are uniformly close to the identity map. 
By \refT{T:Ino-Shi2}, $\mu_n$ may also be parameterised to be uniformly close to the identity map. 
These imply that one may extend the inverses of these maps to a continuous surjective map from $\C{M}_n^0$ to 
$M_n^0$. 
This may be carried out by partitioning the sets $M_n^0$ and $\C{M}_n^0$ into Jordan domains with uniformly 
bounded diameters, and mapping the corresponding pieces one to another, while respecting the boundary conditions. 
We present more details below. 

The curves $v_n^{\pm}$ lie in $\C{M}_n^0$, and $v_n^{\pm}(1/\ga_n-1-i)$ belong to $\gm_n$. 
By \refP{P:disjoint-curves-u_n-v_n-w_n}, $v_n^{\pm}$ are disjoint from $w_n^+$ and $u_n$, and moreover 
$u_n +\mathbb{Z}$ is disjoint from $v_n^{\pm}$ and $w_n^+$.
Recall that $\{w\in \D{C} \mid   0 \leq \Re w \leq 1/\ga_n-c_1\}$ is contained in $\C{M}_n^0$, where $c_1$ is the 
constant in \refP{P:petal-geometry}.   
Thus, for integers $j$ with $0 \leq j \leq 1/\ga_n-c_1-1$, the curves $u_n+j$ are contained in $\C{M}_n^0$ and lie 
on the left hand side of $v_n^+$. Moreover, by the definition of $\gm_n$, all those curves $u_n+j$ meet the 
curve $\gm_n$. 
Let us define $l_n$ as the largest integer less than or equal to $1/\ga_n-c_1-1$. 
The curves $u_n+j$, for $0\leq j \leq l_n$, $v_n^+$, and $w_n^+$ partition $\C{M}_n^0$ into $l_n+2$ pieces, 
say $A_{n, k}^0$, for $1\leq k \leq l+2$. 
Each $A_{n, k}^0$ is uniformly close to a half-infinite vertical strip of width one. 
Next, we divide each $A_{n, k}^0$ into infinitely many nearly-square Jordan domains with 
uniformly bounded diameters. To see this, we note that for each $A_{n, k}^0$, with ``vertical'' boundary curves, 
say $u_n+j$ and $u_n+j+1$, and any parameter $t \in [-1, +\infty)$, the points $u_n(it)+j$ and $u_n(it)+j+1$ may be 
connected by a path in $A_{n, k}^0$ which has a uniformly bounded diameter. 
We may use countably many such paths to partition each $A_{n, k}^0$ into Jordan domains with uniformly bounded 
diameters.
\end{proof}

\subsection{Lifting partial uniformisations}
\begin{propo}\label{P:lifting-interpolations}
Let $n\geq 1$ and $j\geq 0$. Assume that $\gO_n^j: \C{M}_n^j \to M_n^j$ is a continuous and surjective map 
which matches 
$(u_n,v_n^{\pm},w_n^+)$.
Then, there exists a continuous and surjective map 
\[\gO_{n-1}^{j+1}: \C{M}_{n-1}^{j+1} \to M_{n-1}^{j+1}\] 
which matches $(u_{n-1}, v_{n-1}^{\pm}, w_{n-1}^+)$, and for all integers $l$ satisfying 
$(\gep_n+1)/2 \leq l \leq a_{n-1}+ (\gep_n-1)/2$, 
\[\gO_{n-1}^{j+1} \circ (\gU_{n}+l) = Y_n \circ \gO_n^j+l,\]
whenever both sides of the equation are defined. 
\end{propo}

See \refF{F:lifting-partial-uniformisations} for an illustration of the proof of \refP{P:lifting-interpolations}.

\begin{figure}[ht]
\begin{center}
\begin{pspicture}(0.8,-.5)(13.8,10)
\pscustom[linecolor=cyan,fillstyle=solid,fillcolor=cyan]
{
\pscurve(1,4)(1,2)(.9,1.1)(1,1)(1.1,.9)(1,0) 
\psline(4,0)(4,4)
}

\pscustom[linecolor=cyan,fillstyle=solid,fillcolor=cyan]
{
\pscurve(5,4)(5,2)(4.9,1.2)(5,1)
\pscurve(5,1)(4.8,.85)(4.7,.3)
\pscurve(4.7,.3)(4.3,.35)(4,.2)
\psline(4,.2)(4,4)
}

\pscustom[linecolor=orange,fillstyle=solid,fillcolor=orange]
{
\pscurve(5,4)(5,2)(4.9,1.2)(5,1)
\pscurve(5,1)(5.2,1.05)(5.3,.3)
\pscurve[liftpen=1](5.7,.3)(5.8,.85)(6,1)
\pscurve(6,1)(5.9,1.2)(6,2)(6,4)
}

\pspolygon[linecolor=cyan,fillstyle=solid,fillcolor=cyan](8,0)(12,0)(12,4)(8,4)  
\pspolygon[linecolor=orange,fillstyle=solid,fillcolor=orange](12,0)(13,0)(13,4)(12,4)

\pscustom[origin={0,6},linecolor=cyan,fillstyle=solid,fillcolor=cyan]
{
\pscurve(1,4)(1,2)(.95,1.1)(1,1)(1.05,.9)(1,0) 
\pscurve(1,0)(1.22,1.55)(1.25,1.58)(1.28,1.55)(1.48,0.31)(1.5,0.3)
\pscurve[liftpen=1](1.5,.3)(1.6,.8)(1.65,.97)(1.7,1)
\pscurve[liftpen=1](1.7,1)(1.65,1.2)(1.7,2)(1.7,4)
}

\pscustom[origin={0,6},linecolor=orange,fillstyle=solid,fillcolor=orange]
{
\pscurve(2,4)(2,2)(1.95,1.1)(2,1)(2.05,.9)(2,0) 
\pscurve[liftpen=1](1.8,.3)(1.75,.97)(1.7,1)
\pscurve[liftpen=1](1.7,1)(1.65,1.2)(1.7,2)(1.7,4)
}

\pscustom[origin={3.5,6},linecolor=cyan,fillstyle=solid,fillcolor=cyan]
{
\pscurve(1,4)(1,2)(.95,1.1)(1,1)(1.05,.9)(1,0) 
\pscurve(1,0)(1.22,1.55)(1.25,1.58)(1.28,1.55)(1.48,0.31)(1.5,0.3)
\pscurve[liftpen=1](1.5,.3)(1.6,.8)(1.65,.97)(1.7,1)
\pscurve[liftpen=1](1.7,1)(1.65,1.2)(1.7,2)(1.7,4)
}

\pscustom[linecolor=cyan,fillstyle=solid,fillcolor=cyan]
{
\psline(8,10)(8,6) 
\pscurve[liftpen=1](8,6)(8.4,7)(8.8,6)
\psline[liftpen=1](8.8,6)(8.8,10)
}

\pscustom[linecolor=orange,fillstyle=solid,fillcolor=orange]
{
\psline(9,10)(9,6) 
\pscurve[liftpen=1](9,6)(8.9,5.9)(8.8,6)
\psline[liftpen=1](8.8,6)(8.8,10)
}

\pscustom[origin={3.2,0},linecolor=cyan,fillstyle=solid,fillcolor=cyan]
{
\psline(8,10)(8,6) 
\pscurve[liftpen=1](8,6)(8.4,7)(8.8,6)
\psline[liftpen=1](8.8,6)(8.8,10)
}

\psdot[dotsize=1pt](1,1)
\pscurve[linewidth=.4pt](1,4)(1,2)(.9,1.1)(1,1)(1.1,.9)(1,0)

\pscurve[linewidth=.4pt](5,4)(5,2)(4.9,1.2)(5,1)
\pscurve[linewidth=.4pt](5,1)(4.8,.85)(4.7,.3)
\pscurve[linewidth=.4pt](5,1)(5.2,1.05)(5.3,.3)

\pscurve[linewidth=.4pt](6,4)(6,2)(5.9,1.2)(6,1)
\pscurve[linewidth=.4pt](6,1)(5.8,.85)(5.7,.3)

\psline[linewidth=.4pt]{->}(7.6,.3)(13.6,.3)
\psline[linewidth=.4pt](8,0)(8,4)
\psline[linewidth=.4pt](12,0)(12,4)
\psline[linewidth=.4pt](13,0)(13,4)

\pscurve[linewidth=.4pt,origin={0,6}](1,4)(1,2)(0.95,1.1)(1,1)(1.05,.9)(1,0) 
\pscurve[linewidth=.4pt,origin={0,6}](2,4)(2,2)(1.95,1.1)(2,1)(2.05,.9)(2,0)
\pscurve[linewidth=.4pt,origin={0,6}](4.5,4)(4.5,2)(4.45,1.1)(4.5,1)(4.55,.9)(4.5,0)

\pscurve[linewidth=.4pt,origin={2.5,6}](1.5,.3)(1.6,.8)(1.65,.97)(1.7,1) 
\pscurve[linewidth=.4pt,origin={2.5,6}](1.7,1)(1.65,1.2)(1.7,2)(1.7,4)
\pscurve[linewidth=.4pt,origin={2.5,6}](1.8,.3)(1.75,.97)(1.7,1)

\pscurve[linewidth=.4pt,origin={3.5,6}](1.5,.3)(1.6,.8)(1.65,.97)(1.7,1)
\pscurve[linewidth=.4pt,origin={3.5,6}](1.7,1)(1.65,1.2)(1.7,2)(1.7,4)
\pscurve[linewidth=.4pt,origin={3.5,6}](1.8,.3)(1.75,.97)(1.7,1)

\psline[linewidth=.4pt,origin={0,6}]{->}(7.6,.3)(12.6,.3)
\psline[linewidth=.4pt,origin={0,6}](8,0)(8,4)
\psline[linewidth=.4pt,origin={0,6}](11,0)(11,4)
\psline[linewidth=.4pt,origin={0,6}](12,0)(12,4)

\rput(8,-.3){\small $0$}
\rput(12,-.3){\small $\frac{1}{\alpha_n}-1$}
\rput(13,-.3){\small $\frac{1}{\alpha_n}$}

\psline[linewidth=1.5pt,linecolor=gray]{->}(7.9,2)(1,2)
\rput(4,2.2){\small $u_n$}

\psline[linewidth=1.5pt,linecolor=gray]{->}(11.9,2.5)(5.1,2.5)
\rput(8.5,2.7){\small $v_n^\pm$}

\psline[linewidth=1.5pt,linecolor=gray]{->}(12.9,3)(6.1,3)
\rput(9.5,3.2){\small $w_n^+$}

\psline[linewidth=1.5pt,linecolor=gray,origin={0,6}]{->}(1.1,2.7)(2,2.7)
\rput(1.5,8.9){\small $+1$}

\psline[linewidth=1.5pt,linecolor=gray,origin={0,6}]{->}(1.05,3.9)(4.45,3.9)
\rput(3,10.2){\small $+a_{n-1}+\epsilon_n$}

\rput(8,5.7){\small $0$}
\rput(11,5.7){\small $\frac{1}{\alpha_{n-1}}-1$}
\rput(12,5.7){\small $\frac{1}{\alpha_{n-1}}$}

\psline[linewidth=1.5pt,linecolor=gray,origin={0,6}]{->}(7.9,2)(1,2)
\rput(3,8.2){\small $u_{n-1}$}

\psline[linewidth=1.5pt,linecolor=gray,origin={0,6}]{->}(10.9,2.5)(4.2,2.5)
\rput(6,8.7){\small $v_{n-1}^\pm$}

\psline[linewidth=1.5pt,linecolor=gray,origin={0,6}]{->}(11.9,3)(5.2,3)
\rput(8.5,9.2){\small $w_{n-1}^\pm$}

\psline[linewidth=1.5pt,linecolor=gray]{->}(1,4.1)(1,5.9)
\rput(1.3,5){\small $\Upsilon_n$}

\psline[linewidth=1.5pt,linecolor=gray]{->}(6,4.1)(2,5.9)
\rput(4,4.7){\small $\Upsilon_n$}

\psline[linewidth=1.5pt,linecolor=gray]{->}(8,4.1)(8,5.4)
\rput(8.3,5){\small $Y_n$}

\psline[linewidth=1.5pt,linecolor=gray]{->}(5,4.1)(5.2,6.2)
\rput(5.6,5.3){\small $\Upsilon_n +a_{n-1}+\epsilon_n$}
\end{pspicture}
\caption{Illustration of the markings and commutative relations, when $\epsilon_n=-1$.}
\label{F:lifting-partial-uniformisations}
\end{center}
\end{figure}

\begin{proof}
Fix an arbitrary $n\geq 1$ and $j\geq 0$. Let us first assume that $\gep_n=-1$. 
Recall from \refE{E:M_n^j--1} and \eqref{E:CM_n^j--1} that 
\[M_{n-1}^{j+1} 
=\bigcup_{l=0}^{a_{n-1}-2} \big( Y_n(M_n^j)+ l \big) \bigcup \big(Y_n(K_n^j)+ a_{n-1}-1\big).\]
and 
\[\C{M}_{n-1}^{j+1} 
=\bigcup_{l=0}^{a_{n-1}-2} \big(\gU_n(\C{M}_n^j)+ l \big) \bigcup \big(\gU_n(\C{K}_n^j)+a_{n-1}-1\big).\]
For each $0 \leq l \leq a_{n-1}-2$, define $\gO_{n-1}^{j+1}: \gU_n(\C{M}_n^j)+l \to Y_n(M_n^j)+ l$ as 
\[\gO_{n-1}^{j+1}(z)= Y_n\circ \gO_n^j \circ \gU_n^{-1}(z-l)+l.\]  
Since $Y_n$, $\gO_n^j$ and $\gU_n$ are continuous, the above map is continuous. As $\gO_n^j$ is surjective, 
$\gO_{n-1}^{j+1}$ covers $Y_n(M_n^j)+l$. 
Similarly, we define $\gO_{n-1}^{j+1}: \gU_n(\C{K}_n^j)+a_{n-1}-1 \to Y_n(K_n^j)+ a_{n-1}-1$ as 
\[\gO_{n-1}^{j+1}(z)= Y_n \circ \gO_n^j \circ \gU_n^{-1}(z-a_{n-1}+1)+ a_{n-1}-1.\]  
Since $\gO_n^j :\C{M}_n^j \to M_n^j$ matches $u_n$ and $v_n^+$, it follows that $\gO_n^j: \C{K}_n^j \to K_n^j$ is continuous and surjective.

We need to show that $\gO_{n-1}^{j+1}$ is well-defined on the common 
boundaries of $\gU_n(\C{M}_n^j)+l$ and $\gU_n(\C{M}_n^j)+l+1$, for integers $l$ with $0 \leq l \leq a_{n-1}-2$. 
Fix an arbitrary $z$ on the common boundary. There is $t' \geq \Im Y_n(-i)$ such that $z=u_{n-1}(it')+l+1$. 
Let $it'=Y_n(it)$, for some $t\geq -1$, (see \refE{E:invariant-imaginary-line}). 
The map induced from the left-hand component provides 
\begin{align*}
\gO_{n-1}^{j+1}(z)&=Y_n \circ \gO_n^j \circ \gU_n^{-1}(u_{n-1}(it') + l+1-l)+l &&  \\ 
&= Y_n \circ  \gO_n^j \circ w_n^+(1/\ga_n+ Y_n^{-1}(i t')) + l && (\text{by (E2)})  \\
&= Y_n (1/\ga_n+Y_n^{-1}(i t'))+l && (\text{by (M2)}) \\
&=i t'+1+l. && (\text{\refE{E:Y_n-comm-1}})
\end{align*}
The map induced from the right-hand component provides 
\begin{align*}
\gO_{n-1}^{j+1}(z)&=Y_n \circ  \gO_n^j \circ \gU_n^{-1}(u_{n-1}(it')+l+1 - l-1)+l+1  &&\\ 
&=Y_n \circ \gO_n^j \circ u_n(Y_n^{-1}(i t'))+l+1 && (\text{by (E1)}) \\ 
&=Y_n(Y_n^{-1}(i t')) +l +1 && (\text{by (M1)})  \\
&=i t'+l+1. && 
\end{align*}
Thus, the two induced maps are identical on the common boundaries.  

By the definition of $\gO_{n-1}^{j+1}$, the functional equations in the proposition hold. 
There remains to show that $\gO_{n-1}^{j+1}$ matches $(u_{n-1}, v_{n-1}^{\pm}, w_{n-1}^+)$. 

$\bullet$ $\gO_{n-1}^{j+1}$ matches $u_{n-1}$: Let $z=u_{n-1}(it')= Y_n(it)$ for some $t' \geq \Im Y_n(-i)$ and $t\geq -1$. 
By the definition of $\gO_{n-1}^{j+1}$ (use $l=0$), we have 
\begin{align*}
\gO_{n-1}^{j+1}(u_{n-1}(it')) =Y_n \circ \gO_n^j \circ \gU_n^{-1}(u_{n-1}(it')) 
= Y_n \circ \gO_n^j \circ u_n(Y_n^{-1}(it')) = Y_n(Y_n^{-1}(it'))= it'.
\end{align*}
In the above equation we have employed (E1) and  (M1). 

$\bullet$ $\gO_{n-1}^{j+1}$ matches $w_{n-1}^+$: Let $z=w_{n-1}^+(it'+1/\ga_{n-1})$ for some $t' \geq -1$.
As $w_{n-1}^+(it'+1/\ga_{n-1})$ belongs to $\gU_n(\C{K}_n^j)+a_{n-1}-1$, by the definition of $\gO_{n-1}^{j+1}$, 
\begin{align*}
\gO_{n-1}^{j+1}(z) 
&=Y_n \circ \gO_n^j \circ \gU_n^{-1}(w_{n-1}^+(it'+ 1/\ga_{n-1}) - (a_{n-1}-1)) + a_{n-1}-1 && \\
&= Y_n \circ \gO_n^j \circ v_n^+ (1/\ga_n -1+Y_n^{-1}(i t')) + a_{n-1}-1 && (\text{by (E3)}) \\
&=Y_n (1/\ga_n-1+Y_n^{-1}(it')) + a_{n-1}-1 && (\text{by (M3)})  \\
&= Y_n(Y_n^{-1}(i t')) + 1-\ga_n + a_{n-1}-1 && (\text{\refE{E:Y_n-comm-2}})  \\
&=it'+1/\ga_{n-1}.  && (\eqref{E:rotations-relations})
\end{align*}

$\bullet$ $\gO_{n-1}^{j+1}$ matches $v_{n-1}^+$: Let $z=v_{n-1}^+(it'+1/\ga_{n-1}-1)$ for some $t' \geq -1$. 
Using $\gO_{n-1}^{j+1}(z+1)=\gO_{n-1}^{j+1}(z)+1$, we have 
\begin{align*}
\gO_{n-1}^{j+1} (v_{n-1}^+(it'+1/\ga_{n-1}-1)) &= \gO_{n-1}^{j+1} (v_{n-1}^+(it'+1/\ga_{n-1}-1)+1)-1  && \\
&= \gO_{n-1}^{j+1} (w_{n-1}^+(it'+1/\ga_{n-1}))-1  && \\
&= it'+1/\ga_{n-1}-1.    &&  (\text{by (M2)})
\end{align*}

$\bullet$ $\gO_{n-1}^{j+1}$ matches $v_{n-1}^-$: Let $z=v_{n-1}^-(it'+1/\ga_{n-1}-1)$ and $i t'= Y_n(it)$ for 
some $t' \geq \Im Y_n(-i)$ and $t\geq -1$. 
Since $\gep_n=-1$, $1/\ga_{n-1}= a_{n-1}-\ga_n$, and hence $1/\ga_{n-1}-1$ lies strictly between $a_{n-1}-2$ and $a_{n-1}-1$. 
By the definition of $\gO_{n-1}^{j+1}$ (here use $l=a_{n-1}-2$), we have 
\begin{align*}
\gO_{n-1}^{j+1}(v_{n-1}^-(& it'+1/\ga_{n-1}-1)) && \\
&=Y_n \circ \gO_n^j \circ \gU_n^{-1}(v_{n-1}^-(it'+ 1/\ga_{n-1}-1) - (a_{n-1}-2)) + a_{n-1}-2 && \\
&=Y_n \circ \gO_n^j \circ \gU_n^{-1}(w_{n-1}^-(it'+ 1/\ga_{n-1}) - (a_{n-1}-1)) + a_{n-1}-2 && \\
&=Y_n \circ \gO_n^j \circ v_n^-(Y_n^{-1}(it') + 1/\ga_n-1)) + a_{n-1}-2 
&& (\text{by (E3)})  \\
&=Y_n (Y_n^{-1}(it') + 1/\ga_n-1)) + a_{n-1}-2 && (\text{by (M4)})  \\
&=Y_n(Y_n^{-1}(i t') + 1-\ga_n +  a_{n-1}-2   && (\text{\refE{E:Y_n-comm-2}})   \\
&= it' +1/\ga_{n-1} -1. && (\eqref{E:rotations-relations})
\end{align*}
This completes the proof of the proposition when $\gep_n=-1$. 
The proof for the case $\gep_n=+1$ is similar, so we briefly explain the two parts which require attention. 

$\bullet$ $\gO_{n-1}^{j+1}$ matches $w_{n-1}^+$: Let $z=w_{n-1}^+(it'+1/\ga_{n-1})$ for some $t' \geq -1$.
Note that $w_{n-1}^+(it'+1/\ga_{n-1}) \in \gU_n(\C{J}_n^j)+a_{n-1}+1$. Then, by the definition of $\gO_{n-1}^{j+1}$, 
\begin{align*}
\gO_{n-1}^{j+1}&(w_{n-1}^+(it'+1/\ga_{n-1})) && \\
&=Y_n \circ \gO_n^j \circ \gU_n^{-1}(w_{n-1}^+(it'+ 1/\ga_{n-1}) - (a_{n-1}+1)) + a_{n-1}+1 && \\
&= Y_n \circ \gO_n^j \circ v_n^- (Y_n^{-1}(i t')+ 1/\ga_n -1) + a_{n-1}+1 && (\text{by (E3)}) \\
&=Y_n (Y_n^{-1}(it')+ 1/\ga_n-1) + a_{n-1}+1 &&( \text{by (M4)})  \\
&= Y_n(Y_n^{-1}(i t') + \ga_n -1 + a_{n-1}+1 && (\text{\refE{E:Y_n-comm-2}})  \\
&=it'+1/\ga_{n-1}.  && (\eqref{E:rotations-relations})
\end{align*}

$\bullet$ $\gO_{n-1}^{j+1}$ matches $v_{n-1}^-$: Let $z=v_{n-1}^-(it'+1/\ga_{n-1}-1)$ for some $t' \geq \Im Y_n(-i)$. 
Let $i t'= Y_n(it)$. 
Since $\gep_n=+1$, $1/\ga_{n-1}=a_{n-1}+\ga_n$. 
Thus, $1/\ga_{n-1}-1$ lies strictly between $a_{n-1}-1$ and $a_{n-1}$, and hence 
\begin{align*}
\gO_{n-1}^{j+1}&(v_{n-1}^-(it'+1/\ga_{n-1}-1)) && \\
&=Y_n \circ \gO_n^j \circ \gU_n^{-1}(v_{n-1}^-(it'+ 1/\ga_{n-1}-1) - a_{n-1}) + a_{n-1} && \\
&=Y_n \circ \gO_n^j \circ \gU_n^{-1}(w_{n-1}^-(it'+ 1/\ga_{n-1}) - (a_{n-1}+1)) + a_{n-1} && \\
&=Y_n \circ \gO_n^j \circ v_n^+(Y_n^{-1}(it') + 1/\ga_n-1)) + a_{n-1}
&& (\text{by (E3)})  \\
&=Y_n (Y_n^{-1}(it') + 1/\ga_n-1) + a_{n-1} && \text{(by (M3))})  \\
&=Y_n(Y_n^{-1}(i t') + \ga_n -1 +  a_{n-1} && (\text{\refE{E:Y_n-comm-2}})   \\
&= it' +1/\ga_{n-1} -1. && (\eqref{E:rotations-relations})   \qedhere     
\end{align*}
\end{proof}

\subsection{Convergence of the partial uniformisations}\label{SS:convergence-partial-uniformisation}
Fix an arbitrary $n\geq 0$. By \refP{P:U_n^0}, for each $j\geq 0$, there is a continuous and surjective map 
$\gO_{n+j}^0: \C{M}_{n+j}^0 \to M_{n+j}^0$ which matches $(u_{n+j}, v_{n+j}^{\pm}, w_{n+j}^+)$. 
Inductively applying \refP{P:lifting-interpolations}, we obtain a continuous and surjective map
\[\gO_n^j: \C{M}_n^j \to M_n^{j}.\]

\begin{propo}\label{P:U_n^j-Cauchy}
There is $C_9 \in \mathbb{R}$ such that for every $n\geq 0$ and every $j\geq 0$, on $\C{M}_n^{j+1}$, 
\[\big |\gO_n^{j+1} - \gO_n^{j} \big | \leq C_9 (0.9)^j.\]
In particular, for each $n\geq 0$, as $j \to \infty$, the sequence $\gO_n^j$ converges to a continuous map 
\[\gO_n: \C{M}_n \to M_n.\]
\end{propo}

\begin{proof}
First we show that there is $C_9 \in \mathbb{R}$ such that $| \gO_{n}^1- \gO_n^0| \leq C_9$ on $\C{M}_n^1$, 
for all $n \geq 0$. 
Fix an arbitrary $n\geq 0$. Let $z_n \in \C{M}_n^{1}$. There is an integer $l_n$ such that 
$z_n- l_n \in \gU_{n+1}(\C{M}_{n+1}^0)$ and $z_{n+1}= \gU_{n+1}^{-1}(z_n-l_n)$ is defined. 
Note that $\gO_n^0: \C{M}_n^0 \to M_n^0$ and $\gO_{n}^1: \C{M}_{n}^1 \to M_{n}^1$. 
As $\C{M}_{n}^1 \subset \C{M}_{n}^0$, both maps are defined on $\C{M}_{n}^1$. 
By \refT{T:Ino-Shi2}, there is a uniform constant $C\geq 0$, independent of $n$, such that for all 
$z \in \C{M}_n^0$, $\Im z +C \geq -1$. 
Employing \refP{P:gc-vs-Y-value} with $w_1= \gU_{n+1}^{-1}(z_{n}-l_{n})+C$ and $w_2=\gU_{n+1}^{-1}(z_{n}-l_{n})$, 
and then using \refP{P:coordinate-properties}-(iv) and \refP{P:U_n^0}, we obtain 
\begin{align*}
|\gO_{n}^1(z_{n})  - z_{n} | 
&=\left |\left(Y_{n+1} \circ \gO_{n+1}^0 \circ \gU_{n+1}^{-1}(z_{n}-l_{n}) + l_{n} \right)  - z_{n} \right |   \\
&\leq \left | \left(Y_{n+1} \circ \gO_{n+1}^0 (w_2) + l_{n} \right) - \left(Y_{n+1} (w_1) + l_{n} \right)\right | \\
&  \qquad + \left| \left(Y_{n+1} (w_1) + l_{n} \right) - ( \gU_{n+1}(w_2)+l_{n})\right| \\
&\leq 0.9 \cdot \left | \gO_{n+1}^0(w_2) -w_1 \right |  + C_3 \max\{C, 1\}. \\
&\leq \left | \gO_{n+1}^0(w_2) - w_2| + | w_2-w_1 \right |  + C_3 (C+1). \\
&\leq C_8+ C + C_3( C+1). 
\end{align*}
Therefore,
\begin{align*}
|\gO_{n}^1(z_{n}) - \gO_{n}^0(z_{n})| 
\leq |\gO_{n}^1(z_{n}) - z_{n} | + | z_{n}-\gO_{n}^0(z_{n})| \leq C_8+ C + C_3( C+1) + C_8.
\end{align*}
Let us introduce $C_9= 2 C_8 + C+ C_3( C+1)$. 

Now assume that the inequality holds on $\C{M}_{n}^{j}$, for some $j-1 \geq 0$ and all $n\geq 0$. 
For  $z_n \in \C{M}_n^{j+1}$, $\gU_{n+1}^{-1}(z_n-l_n) \in \C{M}_{n+1}^{j}$. 
Using \refP{P:coordinate-properties}-(iv), and the induction hypothesis, for all $n\geq 0$, 
\begin{align*}
\big| \gO_n^{j+1}(z_n) - \gO_n^j(z_n)\big| & = 
\big| Y_{n+1} \circ \gO_{n+1}^{j} \circ \gU_{n+1}^{-1}(z_n-l_n)-Y_{n+1}\circ \gO_{n+1}^{j-1}\circ \gU_{n+1}^{-1}(z_n-l_n)\big| \\
& \leq 0.9 \cdot C_9 (0.9)^{j-1}= C_9 (0.9)^j.
\end{align*}

The sequence $(\gO_n^j)_{j\geq 0}$ is uniformly Cauchy on $\C{M}_n \subset \C{M}_n^j$. It converges to a continuous map.
\end{proof}

\begin{cor}\label{C:commu-U_n}
For every $n\geq 1$ and all integers $l$ satisfying $(\gep_{n}+1)/2 \leq l \leq a_{n-1} +(\gep_n-1)/2$, 
\[\gO_{n-1} \circ (\gU_{n}+l) = Y_n \circ \gO_n+l,\]
whenever both sides of the equation are defined.
\end{cor}

\begin{proof}
This follows from \refP{P:U_n^j-Cauchy} and the functional relation in \refP{P:lifting-interpolations}. 
\end{proof}

We may define $\gO_{-1}: \C{M}_{-1} \to M_{-1}$, as 
\[\gO_{-1}(z)= Y_0 \circ \gO_0 \circ \gU_0^{-1} (z - (\gep_0+1)/2) + (\gep_0+1)/2.\]
By \refP{P:w_n-marking}, and since $\gO_0$ satisfies (M2), for $t \geq 0$, 
\[\gO_{-1}(it+1)= \gO_{-1}(it)+1.\]

\begin{propo}\label{P:near-identity-U_n}
There is a constant $C_{10}$ such that for every $n\geq -1$ and every $z \in \C{M}_n$, 
\[|\gO_n(z)-z| \leq C_{10}.\] 
\end{propo}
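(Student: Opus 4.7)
The plan is to combine the uniform initial estimate from Proposition~\ref{P:U_n^0} with the geometric convergence rate from Proposition~\ref{P:U_n^j-Cauchy} via a telescoping argument. More precisely, for any $w \in M_n \subset M_n^j$ and any $j \geq 0$, the triangle inequality gives
\[
|U_n^j(w) - w| \leq |U_n^0(w) - w| + \sum_{i=0}^{j-1} |U_n^{i+1}(w) - U_n^i(w)|.
\]
The first term is bounded by $C_8$, uniformly in $n$ and $w$, by Proposition~\ref{P:U_n^0}-(ii). Each summand on the right is bounded by $C_9 (\gd_5)^i$ by Proposition~\ref{P:U_n^j-Cauchy}, so the sum is dominated by the convergent geometric series $\sum_{i=0}^{\infty} C_9 (\gd_5)^i = C_9/(1-\gd_5)$ since $\gd_5 \in (0,1)$.

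Passing to the limit as $j \to +\infty$, and using that $U_n^j \to U_n$ pointwise on $M_n$ by Proposition~\ref{P:U_n^j-Cauchy}, we obtain
\[
|U_n(w) - w| \leq C_8 + \frac{C_9}{1-\gd_5}.
\]
Setting $C_{10} = C_8 + C_9/(1-\gd_5)$ yields the claim. Since $C_8$, $C_9$, and $\gd_5$ are all constants independent of $n$, the bound $C_{10}$ is also independent of $n$, as required.

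There is no real obstacle here; the proof is a routine combination of the two preceding propositions, and the only thing to verify is that the bound is uniform in $n$, which follows because the constants in Propositions~\ref{P:U_n^0} and~\ref{P:U_n^j-Cauchy} are themselves uniform in $n$.
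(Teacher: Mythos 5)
Your proof is correct and matches the paper's argument exactly: the same telescoping decomposition, the same appeal to Proposition~\ref{P:U_n^0}-(ii) for the base term and Proposition~\ref{P:U_n^j-Cauchy} for the geometric summands, and the same constant $C_{10} = C_8 + C_9/(1-\gd_5)$.
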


\begin{proof}
By Propositions~\ref{P:U_n^0} and \ref{P:U_n^j-Cauchy}, for all $n\geq -1$, all $j\geq 0$, and all $z \in \C{M}_n$ 
we have 
\begin{equation*}
\big|\gO_n^j(z) - z \big| 
\leq \textstyle{\sum_{l=0}^{j-1}} \big| \gO_n^{l+1}(z)- \gO_n^l(z) \big| +  \big|  \gO_n^0(z) - z \big |
\leq \textstyle{\sum_{l=0}^{+\infty}} C_9 (0.9)^l + C_8.     \qedhere  
\end{equation*} 
\end{proof}

\begin{propo}\label{P:injective-U_n}
For every $n\geq -1$, $\gO_n: \C{M}_n \to M_n$ is injective. 
\end{propo}

\begin{proof}
Fix an arbitrary $n\geq -1$. Let $z_n \neq z'_n$ be arbitrary points in $\C{M}_n$. 
Since $\C{M}_n= \cap_{j\geq 0} \C{M}_n^j$, we may inductively identify integers $l_i$ and $l'_i$ such that 
$z_i- l_i$ and $z'_i- l'_i$ belong to $\gU_{i+1}(\C{M}_{i+1}) \setminus u_{i+1}(i[-1, +\infty))$, and 
$z_{i+1}=\gU_{i+1}^{-1}(z_i-l_i)$ and $z'_{i+1}= \gU_{i+1}^{-1}(z'_i-l'_i)$ are defined. 

If there is a smallest $i \geq n$ such that $l_i \neq l_i'$, then $\gO_i(z_i) \neq \gO_i(z_i')$. 
Using the commutative relations in Corollary~\ref{C:commu-U_n}, one concludes that 
$\gO_n(z_n) \neq \gO_n(z_n')$. 

Now assume that for all $i\geq n$, $l_i=l_i'$. 
Recall the sets $\tilde{\C{M}}_n^0$ and the hyperbolic metrics $\varrho_n$ defined in \refS{SS:uniform-contraction}. 
By the uniform contraction of the maps $\gU_i+l_{i-1}$ in \refP{P:uniform-contraction-gU_n}, 
we must have $d_{\varrho_i}(z_i, z_i') \to +\infty$ as $i \to +\infty$. 
By \refL{L:extension-gU_n}, $\C{M}_{n+j}^1$ is well-contained in $\tilde{\C{M}}_{n+1}^0$. This implies that 
$|z_i - z_i'| \to +\infty$, as $i \to +\infty$. 
In particular, there is $i\geq n$, such that $|z_i - z_i'| > 2 C_{10}$. 
By virtue of \refP{P:near-identity-U_n}, we must have $\gO_i(z_i) \neq \gO_i(z_i')$. 
Using the functional relations in Corollary~\ref{C:commu-U_n} as well as the injectivity of $Y_l$ on $M_l^0$ 
and $\gU_{l}$ on $\C{M}_l^0$, we must have $\gO_n(z_n) \neq \gO_n(z_n')$.
\end{proof}

\begin{propo}\label{P:U_n-onto}
For every $n \geq -1$, $\gO_n: \C{M}_n \to M_n$ is surjective.
\end{propo}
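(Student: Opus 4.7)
The plan is to use a compactness argument combined with the Cauchy estimate of Proposition~\ref{P:U_n^j-Cauchy} to show that every point of $\C{M}_n$ has a preimage in $M_n$. Fix $n\geq -1$ and let $z\in \C{M}_n=\bigcap_{j\geq 0}\C{M}_n^j$. For each $j\geq 0$ the map $U_n^j\colon M_n^j\to \C{M}_n^j$ is a homeomorphism, so there exists a unique $w_j\in M_n^j$ with $U_n^j(w_j)=z$. First I would observe that the near-identity estimate for $U_n$ in Proposition~\ref{P:near-identity-U_n} also holds for the approximants $U_n^j$ with a uniform constant $C_{10}'$: indeed, by Proposition~\ref{P:U_n^0} and Proposition~\ref{P:U_n^j-Cauchy}, for every $w\in M_n^j$,
\[
|U_n^j(w)-w|\leq |U_n^0(w)-w|+\sum_{l=0}^{j-1}|U_n^{l+1}(w)-U_n^l(w)|\leq C_8+\frac{C_9}{1-\gd_5}.
\]
Applying this with $w=w_j$ yields $|w_j-z|\leq C_{10}'$, so the sequence $\{w_j\}_{j\geq 0}$ is bounded in $\D{C}$.

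Next I would extract a convergent subsequence $w_{j_k}\to w^*$. Since the sets $M_n^j$ are closed and nested decreasingly, for every fixed $m$ we have $w_{j_k}\in M_n^{j_k}\subset M_n^m$ for all $k$ large enough, so $w^*\in M_n^m$ by closedness. Letting $m\to\infty$ gives $w^*\in \bigcap_m M_n^m=M_n$, and in particular $U_n(w^*)$ is defined.

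It remains to show $U_n(w^*)=z$. For any $m\geq 0$, writing $U_n^{j_k}(w_{j_k})=z$ and splitting by the triangle inequality,
\[
|U_n(w^*)-z|\leq |U_n(w^*)-U_n^m(w^*)|+|U_n^m(w^*)-U_n^m(w_{j_k})|+|U_n^m(w_{j_k})-U_n^{j_k}(w_{j_k})|.
\]
The first term is bounded by $C_9\gd_5^m/(1-\gd_5)$ via the telescoping estimate of Proposition~\ref{P:U_n^j-Cauchy}. The third term, for $j_k\geq m$, is bounded by the same quantity since $w_{j_k}\in M_n^{j_k}\subset M_n^{l+1}$ for all $l$ with $m\leq l\leq j_k-1$, so the Cauchy estimate applies term by term in the telescoping sum. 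Given $\eps>0$, first choose $m$ large enough that both geometric tails are $<\eps/3$; then, with $m$ fixed, use continuity of $U_n^m$ on $M_n^m$ (it is a homeomorphism by construction) to choose $k$ large so that $|U_n^m(w^*)-U_n^m(w_{j_k})|<\eps/3$. This forces $U_n(w^*)=z$, completing surjectivity.

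The only real subtlety is the third term in the triangle inequality: one must be careful that the telescoping bound is legitimately applicable to $w_{j_k}$, which is not a priori in $M_n$. This is resolved precisely because $w_{j_k}\in M_n^{j_k}$ and the Cauchy estimate of Proposition~\ref{P:U_n^j-Cauchy} only requires membership in the smaller set $M_n^{l+1}$ for the difference $U_n^{l+1}(w)-U_n^l(w)$ to be controlled, which holds for all $l\leq j_k-1$. Combined with Proposition~\ref{P:injective-U_n}, this yields a continuous bijection $U_n\colon M_n\to\C{M}_n$.
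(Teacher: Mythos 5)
Your proposal is correct and follows essentially the same route as the paper: find $w_j\in M_n^j$ with $U_n^j(w_j)=z$, use the uniform near-identity bound to extract a convergent subsequence, and pass the limit into $M_n$ by nestedness and closedness. The only difference is that you carefully justify the final step $U_n(w^*)=z$ via a three-term triangle inequality, whereas the paper simply asserts ``the convergence of $U_n^j$ implies that $U_n(w')=w$''; your elaboration is the right way to make that assertion rigorous, since $w_j$ varies with $j$ and one cannot naively pass to the limit.
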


\begin{proof}
Fix an arbitrary $n\geq -1$, and $z \in M_n$. As $M_n= \cap_{j\geq 0} M_n^j$ and for each $j\geq 0$, 
$\gO_n^j: \C{M}_n^j \to M_n^j$ is surjective, there is $z_j \in \C{M}_n^j$ with $\gO_n^j(z_j)= z$. 
By \refP{P:near-identity-U_n}, $z_j$ are contained in a bounded subset of $\C{M}_n^0$. 
Thus, there is a subsequence, say $z_{j_k}$, for $k \geq 1$, which converges to some $z' \in \C{M}_n^0$. 
However, because $\cap_{j\geq 0} \C{M}_n^j$ is a nest of closed sets, we must have $z' \in \C{M}_n$. 
Then, the uniform convergence of $\gO_n^j$ to $\gO_n$ implies that $\gO_n(z')=z$. 
\end{proof}

\subsection{Proofs of the main theorems, and corollaries}\label{SS:harvest}
Recall the straight topological model $A_\ga = \partial \hat{A}_\ga$ and the model map 
$T_\ga: \hat{A}_\ga \to \hat{A}_\ga$ from \refS{S:arithmetic-model}, 
and the closed set $\C{A}_f =\partial \hat{\C{A}}_f$ from \refS{SS:capturing-pc}.
In this section we show that $f: \gL(c_f) \to \gL(c_f)$ is topologically conjugate to $T_\ga: A_\ga \to A_\ga$, 
and transfer the features of the latter system to the former one. 

\begin{thm}\label{T:pc-model-related}
There is $N \in \mathbb{N}$ such that for every $\ga \in \irr$ and every $f\in \QIS_\ga$, there is a 
homeomorphism $\gY_f: \hat{\C{A}}_f  \to \hat{A}_\ga$ which satisfies $T_\ga \circ \gY_f= \gY_f \circ f$ on 
$\hat{\C{A}}_f$.
Moreover, $\gY_f (\gL(c_f))= A_\ga$.
\end{thm}

\begin{proof}
Let $N$ be the integer in \refP{P:sequence-renormalisations}. 
For $\ga \in \irr$ and $f\in \QIS_\ga$, by following the constructions in Sections \ref{S:parametrised curve}, 
\ref{S:modified nest} and \ref{S:uniformisation}, we obtain the homeomorphism $\gO_{-1}:\C{M}_{-1} \to M_{-1}$, 
which satisfies $\gO_{-1}^{-1}(it+1)=\gO_{-1}^{-1}(it)+1$, for $t\geq 0$. 
Then, $U_{-1}$ induces a homeomorphism $\gY_f: \hat{\C{A}}_f \to \hat{A}_\ga$.

By \refP{P:pc-covered}, $f(\hat{\C{A}}_f) \subseteq \hat{\C{A}}_f$. 
By \refP{P:iterates-lifts-one-step}, the definition of $T_\ga$, and \refC{C:commu-U_n}, 
$T_\ga \circ \gY_f= \gY_f \circ f$ on $\hat{\C{A}}_f$.
Indeed, one only needs to verify the conjugacy on the set of points which satisfy item (i) in the definition of $T_\ga$, 
because the set of such points is dense in $\hat{A}_\ga$. 

By \refP{P:pc-covered}, $\gL(c_f) \subseteq \hat{\C{A}}_f$. 
Since $U_{-1}(+1)=0$, $\gY_f$ maps the critical value of $f$ to $+1$ in $\hat{A}_\ga$. 
Then, $\gY_f$ maps $\gL(c_f)$ to the closure of the orbit of $+1$ in $\hat{A}_\ga$. 
On the other hand, by \refT{T:model-dynamics}, the orbit of $+1$ by $T_\ga$ is, contained in and, dense in $A_\ga$. 
Thus, $\gY(\gL(c_f))= A_\ga$. 
\end{proof}

\begin{proof}[Proofs of Theorems \ref{T:trichotomy-main-thm}, \ref{T:degenerations-invariant-curves} 
and \ref{T:dynamics-on-pc}]
Recall from \refS{S:NP-renormalisation-scheme} that the class of maps $\cup_{\ga \in \irr} \QIS_\ga$ is the class 
$\C{F}$ in the introduction. 
\refT{T:trichotomy-main-thm} follows from \refT{T:pc-model-related} and the trichotomy of $A_\ga$ in 
\refT{T:trichotomy-model}. 
Theorems \ref{T:degenerations-invariant-curves} and \ref{T:dynamics-on-pc} follow from \refT{T:pc-model-related} and 
\refT{T:model-dynamics}.
\end{proof}

We make the following connection to the set $\C{A}_f$ for future purposes. 

\begin{cor}
For every $\ga \in \irr$ and every $f\in \QIS_\ga$, $\gL(c_f)=\C{A}_f$.   
\end{cor}

\begin{proof}
Because $\hat{\C{A}}_f$ is closed, $\C{A}_f \subseteq \hat{\C{A}}_f$.
By \refT{T:pc-model-related} and the invariance of domain theorem, 
\[\gY_f(\C{A}_f) = \gY_f(\partial \hat{\C{A}}_f) = \partial \hat{A}_\ga= A_\ga = \gY_f(\gL(c_f)).\] 
Therefore, $\C{A}_f = \gL(c_f)$.
\end{proof}

Combining Theorems \ref{T:model-dynamics} and \ref{T:pc-model-related}, we obtain a proof of the following 
result in \cite[Thm 4.6]{AC18}.

\begin{cor}\label{C:injective-on-pc}
For every $\ga \in \irr$ and every $f \in \QIS_\ga$, $f: \gL(c_f) \to \gL(c_f)$ is injective.  
\end{cor}

Given a connected set $X \subset \mathbb{C}$, let us say that $x \in X$ is an \textbf{end point} 
of $X$ if $X \setminus \{x\}$ is connected. 

\begin{thm}\label{T:cv-endpoint}
For every $\ga\in \irr \setminus \E{H}$, every $f\in \QIS_\ga$, and every integer $k\geq 0$, 
$f\co{k}(c_f)$ is an end point of $\gL(c_f)$. 
\end{thm}

\begin{proof}
Recall that $\gY_f$ maps the critical value of $f$ to $1$ in $A_\ga$. 
By \refE{E:b_n(0)=0}, $1$ is an end point of $A_\ga$ when $\ga$ is an irrational number outside $\E{H}$. 
Then, by \refT{T:pc-model-related}, the critical value of $f$ is an end point of $\gL(c_f)$.
Then, by \refC{C:injective-on-pc}, every $f\co{k}(c_f)$ must be an end point of $\gL(c_f)$.
\end{proof}

As an immediate corollary of \refT{T:pc-model-related} we obtain the following. 

\begin{cor}\label{C:pc-topological-universality}
For every $\ga\in \irr$ and every $f$ and $g$ in $\QIS_\ga$ with $f'(0)=g'(0)=e^{2\pi i \ga}$, 
$\gY_g^{-1} \circ \gY_f: \gL(c_f) \to \gL(c_g)$ topologically conjugates $f$ with $g$, and satisfies 
$\gY_g^{-1} \circ \gY_f(c_f)=c_g$. 
\end{cor}

\begin{rem}
One may give a different proof of the above corollary using the holomorphic motion of the critical orbits of the maps, 
parametrised over the infinite dimensional complex manifold $\IS_\ga$. 
The proof of \refT{T:pc-model-related} only uses the compactness of $\IS_\ga$, 
and does not require any complex structure on $\IS_\ga$. 
\end{rem}

By  a general result in dynamics of polynomials \cite[thm 18.5]{Mi06}, having no critical point on the 
boundary of the Siegel disk makes the Julia set non locally connected; see also \cite{Ki00,KLN15}. 

\begin{cor}
For every $\ga \in \irr \cap (\E{B} \setminus \E{H})$ and every polynomial $P \in \QIS_\ga$, the Julia set of $P$ 
is not locally connected.
\end{cor}

In light of this, we make the following conjecture.  
\begin{conjecture}
For every $\ga \in \mathbb{R} \setminus \mathbb{Q}$, the Julia set of $Q_\ga$ is locally connected iff 
$\ga \in \E{H}$.
\end{conjecture}

In \cite{Blo10}, some progress is made in describing the topology of the Julia set of $Q_\ga$ when it is not 
locally connected. 

\begin{rem}
In \cite{PM97}, Perez-Marco introduced the notion of hedgehogs, or Siegel-compacta, for a general holomorphic map 
with an irrationally indifferent fixed point. That is a non-trivial local invariant compact set containing the fixed point. 
In general, a Siegel compacta may have a complicated topology, see for instance \cite{Che11}. 
But, if the holomorphic map is a restriction of an element of $\QIS_\ga$ to a neighbourhood of the fixed point, 
then the hedgehog may not have a complicated topology, provided $\ga \in \irr$. 
Indeed, using a general result of Ma{\~n}{\'e}, and the lack of expansion along orbits in a Siegel compacta, any Siegel 
compacta of a rational function must be contained in the post-critical set. 
This is true for any element of $\QIS_\ga$, \cite[Section 4.3]{AC18}. 
Thus, such Siegel compacta and hedgehogs must be one of the invariant sets presented in \refT{T:dynamics-on-pc}. 
Our work suggests that for rational functions of the Riemann sphere the hedgehogs and Siegel compacta have 
tame topologies. 
In contrast, for an arbitrary holomorphic germ of diffeomorphism of $(\D{C}, 0)$, this is far from true as it is shown in 
\cite{Bis16}. 
\end{rem}

\begin{rem}
Cantor bouquets also appear as the closure of the set of escaping points of the maps $\gl e^z$, for $0< \gl< 1/e$, 
see \cite{Dev84,AaOv93,Rem06}. 
The analysis of the renormalisation in this paper, among others, presents the similarity between these dynamical systems.
Naively speaking, that is due to the behaviour of the changes of coordinates in the renormalisation 
tower of a given map in $\QIS_\ga$. 
Each change of coordinate behaves like the $\log$ function below a certain horizontal line (while behaving like a 
linear map above that line). 
\end{rem}

Evidently, the analysis of the renormalisation scheme presented in this paper provides some geometric features of the post-critical 
set as well. For example, we have the following result. 

\begin{cor}\label{C:landing-angle}
For every non-Brjuno number $\ga \in \irr$ and every $f \in \QIS_\ga$, every connected component of 
$\Lambda(f) \setminus \{0\}$ lands at $0$ at a well-defined angle. 
\end{cor} 

\begin{proof}
By \refP{P:u_n-asymptotics}, for every $n\geq -1$, $\lim_{t\to +\infty} \Re u_n(it)$ exists and is finite. This implies that 
the connected component of $\Lambda(f) \setminus \{0\}$ which contains the critical value of $f$ lands at $0$ at a well-defined angle. 
Any iterate of this curve by $f$ lands at $0$ at a well-defined angle. 
Since the set of the angles of all those rays is dense on $\mathbb{R}/\mathbb{Z}$, any other component of 
$\Lambda(f) \setminus \{0\}$ must also land at $0$ at a well-defined angle. 
\end{proof}

\medskip

\textbf{Acknowledgement:}
The author acknowledges financial support from EPSRC of the UK - grant no EP/M01746X/1 - 
while working on this project. 
\bibliographystyle{smfalpha}
\bibliography{Data}
\end{document}